\def\BState{\State\hskip-\ALG@thistlm}
\newcommand\indentdisplays[1]{%
     \everydisplay{\addtolength\displayindent{#1}%
     \addtolength\displaywidth{-#1}}}  %HAYK, for setting indent size in itemize
\renewenvironment{proof}[1][\proofname ]{{\noindent \bfseries #1. }}{\qed \bigskip } 
\def\subjclass#1{{\renewcommand{\thefootnote}{}%
\footnote{\emph{Mathematics Subject Classification (2010):} #1}}}
\newcommand{\R}{{\mathbb R}}
\newcommand{\Z}{{\mathbb Z}}
\newcommand{\cH}{{\mathcal H}}
\newcommand{\e}{\varepsilon}
\newcommand{\supp}{\operatorname{supp}}
\newtheorem{theorem}{Theorem}[section]
\newtheorem{cor}[theorem]{Corollary}
\newtheorem{definition}{Definition}[section]
\newtheorem{lem}[theorem]{Lemma}
\newtheorem{prop}[theorem]{Proposition}
\newtheorem{remark}[theorem]{Remark}
\definecolor{orange}{RGB}{220,110,0}
\definecolor{violet}{RGB}{141,10,100}
\numberwithin{equation}{section}
\title[Perturbed sandpile]{Perturbed divisible sandpiles and Quadrature surfaces}
\author{Hayk Aleksanyan}
\address{Department of Mathematics, KTH Royal Institute of Technology, SE-100 44  Stockholm,
Sweden}
\email{hayk.aleksanyan@gmail.com}
\author{Henrik Shahgholian}
\address{Department of Mathematics, KTH Royal Institute of Technology, SE-100 44  Stockholm,
Sweden}
\email{henriksh@math.kth.se}
\thanks{H. A. was supported by postdoctoral fellowship from Knut and Alice Wallenberg Foundation. 
H. Sh. was partially supported by Swedish Research Council.}
\keywords{Singular perturbation, lattice growth model, quadrature surface, Bernoulli free boundary, boundary sandpile, balayage, divisible sandpile, scaling limit}
\def\@tocline#1#2#3#4#5#6#7{\relax
  \ifnum #1>\c@tocdepth % 
  \else
    \par \addpenalty\@secpenalty\addvspace{#2}%
    \begingroup \hyphenpenalty\@M
    \@ifempty{#4}{%
      \@tempdima\csname r@tocindent\number#1\endcsname\relax
    }{%
      \@tempdima#4\relax
    }%
    \parindent\z@ \leftskip#3\relax \advance\leftskip\@tempdima\relax
    \rightskip\@pnumwidth plus4em \parfillskip-\@pnumwidth
    #5\leavevmode\hskip-\@tempdima
      \ifcase #1
       \or\or \hskip 2em \or \hskip 2em \else \hskip 3em \fi%
      #6\nobreak\relax
    \dotfill\hbox to\@pnumwidth{\@tocpagenum{#7}}\par
    \nobreak
    \endgroup
  \fi}
\begin{document}

\subjclass{31C20, 35B25, 35R35 (31C05, 82C41)}

\begin{abstract}    
The main purpose of the present paper is to establish a link
between quadrature surfaces (potential theoretic concept) and sandpile dynamics (Laplacian growth models).
For this aim, we introduce a new model of Laplacian growth on the lattice $\Z^d$ $(d\geq 2)$ which continuously deforms occupied regions
of the \emph{divisible sandpile} model of Levine and Peres \cite{Lev-Per10},
by redistributing the total mass of the system onto $\frac 1m$-sub-level sets of the odometer which is a function counting total emissions of mass
from lattice vertices. In free boundary terminology this goes in parallel with singular perturbation, 
which is known to converge to a Bernoulli type free boundary.

We prove that models, generated from a single source, have a scaling limit, 
if the threshold $m$ is fixed. Moreover, this limit is a ball, and the entire mass of the system is being 
redistributed onto an annular ring of thickness $\frac 1m$. 
By compactness argument we show that, when $m$ tends to infinity sufficiently slowly with respect to the scale
of the model, then in this case also there is scaling limit which is a ball,
with the mass of the system being uniformly distributed onto the boundary of that ball,
and hence we recover a quadrature surface in this case.

Depending on the speed of decay of $m$, the visited set of the sandpile interpolates between spherical
and polygonal shapes. Finding a precise characterisation of this shape-transition phenomenon seems
to be a considerable challenge, which we cannot address at this moment.
\end{abstract}

\maketitle

{\small{\tableofcontents}}

\section{Introduction}\label{sec-intro}

\subsection{Background}
In a recent work  \cite{AS}, the current authors introduced a new growth model on the lattice $\Z^d$
$(d\geq 2)$ which redistributes a given initial mass on $\Z^d$
onto a combinatorial free boundary. The growth rule
asks vertices of $\Z^d $ lying in the interior of the visited sites of the model,
and vertices on the boundary of the set of visited sites carrying mass larger than a
prescribed threshold, to redistribute their entire mass evenly among their $2d$ lattice neighbours.
This procedure  creates a sequence of non-decreasing
domains that was shown to converge to, what we termed,  {\it Boundary Sandpile} (hereinafter \textbf{BS} for short).  
This local rule allows (and in fact forces) huge masses to accumulate on the free boundary
of the moving front. 
In the case of a single source  mass, the numerics indicates that shapes generated by this model  do not converge to a sphere under a scaling limit, but to a shape somewhat reminiscent of the classical Abelian sandpile (see \cite{Bak} for definition, and \cite{PS} for the scaling limit). 

The initial motivation for BS  was to find a sandpile dynamic that gives us the so-called Quadrature surfaces  (QS) (see \cite{Sh94-1, Sh94-2}). 
 A QS, for a given source  $\mu$ 
  is the boundary of a domain $D$, that contains the support of $\mu$, and   has the property that the Poincar\'e balayage satisfies 
   \begin{equation}\label{balayage}
\mathrm{Bal}:  \mu   \longrightarrow    \cH^{d-1} \lfloor_{\partial D}  ,
\end{equation}
which is equivalent to 
 \begin{equation}\label{QS}
\int h(x) d\mu = \int_{\partial D} h(x) d \mathcal{H}^{d-1},
\end{equation}
for all $h$ harmonic on a neighbourhood of $\overline D$.  

Our intention in this paper is   to introduce a new sandpile dynamic that  (contrary to \textbf{BS})
will  correspond to a  QS in its scaling limit.  This would then parallel the theory of 
(well-behaved) divisible sandpiles (\textbf{DS}), which 
redistributes an initial mass by putting a prescribed amount of mass at each visited site,
and only moving out the excess from this prescribed amount.
In \textbf{DS} the limit shapes, even for multi-source masses (with some reasonable control over their distribution), have shown to be the so-called \emph{quadrature domains} (QD) with prescribed density and given source 
(see \cite{Lev-Per} for the single source, \cite{Lev-Per10} for multi-source case, also \cite{Lev-thesis} and
\cite{Lev-Per16}); a well-established area in potential theory \cite{GSh-2005}. 

To establish a link between QS and sandpile dynamics, we have chosen an approach based on singular perturbation theory for free boundary value problems (see \cite{BCN}). This approach suggests that  if one considers a slight perturbation of \textbf{DS} starting with total mass $n$ at the origin,
by putting larger mass $m$ on  $(\frac1m)$-sub-level sets of the odometer function, and letting $m=m_n$ tend to infinity as $n$ tends to infinity, 
then interesting shapes appear. By taking $m\approx n^{1/d}$ we obtain a shape close to that of \textbf{BS}, 
and by letting $m$ be fixed (but large) we obtain spherical shapes, see Figure \ref{Fig-many}.
The question that arose was: 
\smallskip

\begin{center}
{\it How fast/slow (relative to $n$) should  $m$  grow, in order to reach a desirable sandpile?}
\end{center}

\smallskip

More precisely, we are interested in finding various functions $F$, with  $m=F(n)$, for which there is
a new sandpile shape.  When $F(n) \asymp n^{1/d} $ it is obvious that we are close to \textbf{BS}, and when 
$F(n) =\mathrm{const}$, then we are having  a  \textbf{DS}\footnote{The only difference between DS and this case is that mass is now redistributed to the sub-level sets, with constant amount.}.

In this paper we show, using compactness arguments that there exists $m=F(n) \nearrow \infty $
for which the sandpile shape converge to a sphere, and the entire mass is being uniformly distributed onto the boundary.
 This in general suggests that in analogy with constructing QD through  \textbf{DS},  we can construct  QS through  sandpile dynamics too.

\begin{remark}{\normalfont(Technical remark for experts)}
In this paper, when studying the scaling limit of the model,  we only  consider  the case of a single source, and leave out the   general case. 
The reason for this is  several   infeasible  technical difficulties at the moment.
One major problem   for the case of general initial source is the uniqueness question for the QS. 
This may still not be an issue, if we can show that scaling limits of our problem  are  unique (as it was done in the case of Abelian Sandpile \cite{PS}, for instance). It is however far from 
obvious, even for a two-point source (see Figure \ref{Fig-2points} for a numerical illustration), how such a uniqueness can be proven.  In our case we have used the geometry of spheres to overcome this difficulty.
 
On the other hand it will be apparent (from what follows in this paper) 
that we can prove that for each  fixed $m$ subsequences of our model converge to solutions of  a free boundary problem  of the type
$$
\Delta u_m  = m \mathbb{I}_{\{0< u_m < 1/m\}} - \mu ,
$$
where $\mu$ is the initial source, consisting of several point masses.   A solution to this problem is 
not necessarily unique if $\mu$ is not a single point source. However, from free boundary technique 
we know that under certain good conditions (which is satisfied at least for a two-source point)
  the function $u_m$ converge (for a subsequence)
 to a function $\tilde u$, as $m$ tends to infinity, where $\tilde u$ solves a free boundary problem of Bernoulli type 
 $$
\Delta \tilde u  = c_0  \cH^{d-1} \lfloor_{\partial  \mathbb{I}_{ \{\tilde u > 0 \}  } } - \mu , \qquad (\hbox{some $c_0 >0$})
$$
which amounts to a quadrature identity of type \eqref{QS}. 

From here we want to deduce that by compactness we may consider a sequence  of $m=m_n$ such that $m_n \to \infty$, as $n \to \infty$, in a way that we can achieve the quadrature identity \eqref{QS} for the limit scenario.  For single source case the analysis heavily depends on the uniqueness and stability of the limit shape, that is unavailable for multi-source case.

\end{remark}

\begin{figure}[!htbp]
\centerline{%
\includegraphics[width=2in]{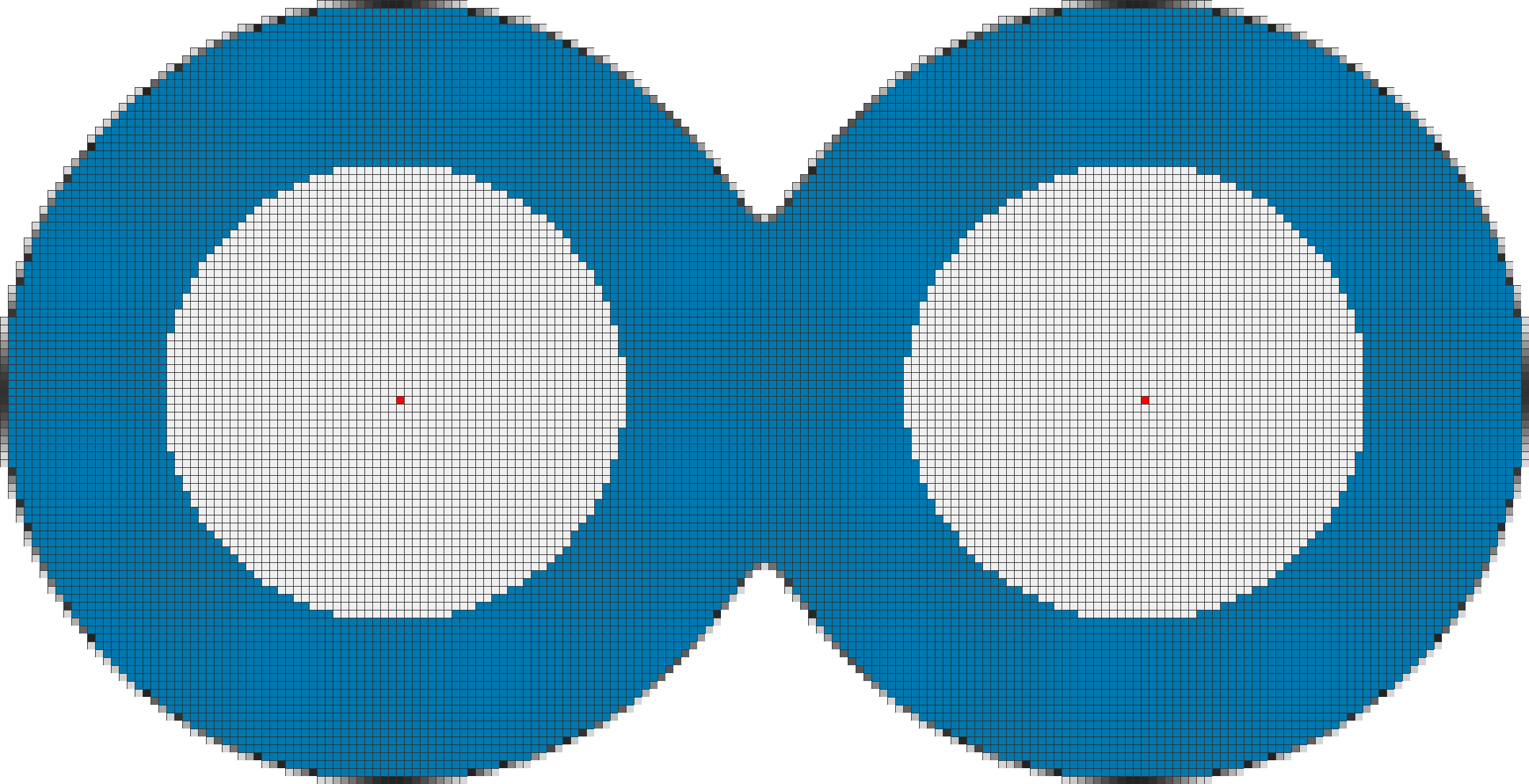}
\hspace{0.4cm}
\includegraphics[width=2in]{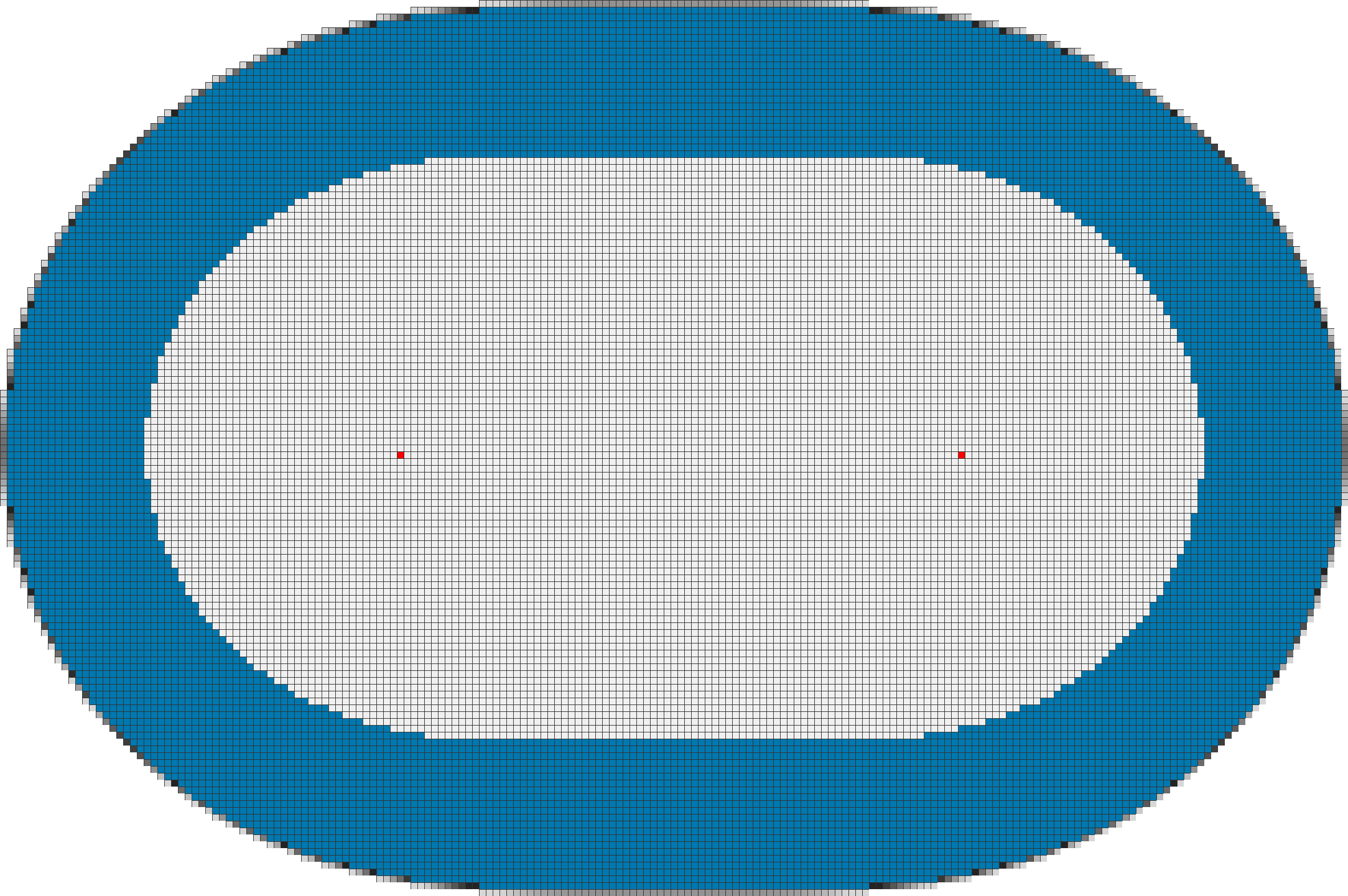}
}%
\caption{\footnotesize{Numerical simulation of the sandpile with initial distribution
concentrated at two points of $\Z^2$. On the left, each point $(\pm 47,0) \in \Z^2$
carries mass $50 \ 000$, and the sandpile threshold $m$ is set to $10$. On the right
we bring the two sources closer, by putting them at $(\pm 41,0)\in \Z^2$ and keeping the rest
of the parameters unchanged; the final configuration then evolves to a stadium shape.
The blue region in both images depicts points of $\Z^2$ having mass $m=10$,
and the gray area embraced by the blue region, carries no mass.
The two marked red points in the gray area represent the support of the initial mass distribution.}}
\label{Fig-2points}
\end{figure}

\begin{remark} {\normalfont (To non-expert readers)} Although it took a while to pull the right strings to get this result, we want to stress that the paper actually
is using soft mathematics, coming from classical theory of free boundary problems, translated into the discrete language. Hence we expect the reading to be easy.
We need also to stress that  the novelty of the paper lies in the model itself, and connections between free boundary problems and sandpile dynamics.
This we believe might have  some  potential  to grow in the near future and to give rise to  an independent research area.
There are indeed many free boundary problems on one side and several  particle dynamic processes on the other side that seem to be defining
the same model from different perspectives. Attempts to bridge these  models  should be valuable and interesting. 
\end{remark}

\subsection{Discrete Laplacian}\label{sub-sec-discrete-Lap}
Before getting into the technical definitions and details, 
we fix some notation, and several basic facts
which will be used throughout the text. 

Let $h>0$ be fixed, and $f: h\Z^d \to \R$ be a given function. Its (normalised) $\Delta^h$-Laplacian is defined as
$$
\Delta^h f(x) = \frac{1}{2d \hspace{0.02cm} h^2}  \sum\limits_{ y\sim_h x } [ f(y) - f(x) ],
$$
where $y\sim_h x$ means that $x,y\in h\Z^d$ are lattice neighbours, i.e. $||x-y||_{l^1} =h$.
When $h=1$, we will simply write $x\sim y$ instead of $x\sim_1 y$.
Similar to the discrete Laplace operator, one defines discrete derivatives.
Namely, for a unit vector $e\in \Z^d$, i.e. $|| e ||_{l^1}=1$, the function
$$
\nabla_e^h f(x) = h^{-1} \left[ f(x + h e ) - f(x) \right], 
$$
is the discrete derivative of $f$ in the direction $e$ at the point $x\in h\Z^d$.

It is well-known that as the lattice spacing tends to 0, the discrete Laplacian, and the discrete gradient converge to their continuous analogues, in a sense that
for any $\varphi \in C_0^\infty (\R^d)$ one has $2d \Delta^h  \varphi \to \Delta \varphi $, and $\nabla_e^h \varphi(x) \to \nabla_e \varphi(x)$ uniformly in $\R^d$, where $\Delta$ and $\nabla_e$ are respectively
the continuum Laplace operator and derivative in the direction $e$.

\smallskip
We next recall the definition of the \emph{fundamental solution} to $\Delta^1$ in $\Z^d$.
Namely, there exists a function $g(x,y):\Z^d \times \Z^d \to \R  $ such that
$\Delta^1_x g(x,y) = -\delta_0(y-x)$ for all $x,y \in \Z^d$, where $ \Delta^1_x $
is the Laplacian with respect to the $x$ variable, and $\delta_0$ is the characteristic function of $0\in \Z^d$.
The existence and asymptotics of such $g$ are well-known (see \cite{Fuk-Uc}, \cite{Uchiyama}, \cite{Lawler-book-walks})
and we have
\begin{equation}\label{g-x-y}
 g(x,y) =   \begin{cases}  -\frac{2}{\pi} \log|x-y|  + \gamma_0 + \mathrm{O}(|x-y|^{-2}) , &\text{$d=2$}, \\ 
  \frac{2}{(d-2) |B_1|} |x-y|^{2-d} + \mathrm{O}(|x-y|^{-d})  , &\text{$d \geq 3$},    \end{cases}
\end{equation}
where $\gamma_0$ is a constant, and $|B_1|$ is the volume of the unit ball in $\R^d$.

\smallskip
The operator $\Delta^1$ also enjoys a maximum principle
(see for instance \cite[Exercise 1.4.7]{Lawler-book-walks}).

\vspace{0.2cm}
\noindent \textbf{Discrete maximum principle (DMP): } Let $u,v: \Z^d \to \R$, $V\subset \Z^d$ be finite,
 and $\Delta^1 u \geq \Delta^1 v$ in the interior of $V$.
Then $\max_{\partial V} (u-v) \geq \max_{V} (u-v)$.

\smallskip
In this statement, and throughout the paper, the boundary of a non-empty set $E\subset \Z^d$,
denoted by $\partial E$,
is the set of all $x\in \Z^d$ for which there exists $x\in E$ such that $x\sim y$.

\subsection{Our model and the main results}\label{sub-sec-model}
We now give the formal definition of the sandpile process which is the focus of this paper.
Throughout the paper we will always assume that the dimension $d$ of the space 
is at least two.
Assume we have an initial distribution of mass $\mu_0 :\Z^d \to \R_+$ $(d\geq 2)$, that is a non-negative bounded function of finite support on $\Z^d$, and
let $n\geq 0$ be the total mass of the system, i.e. $n=\sum_{x\in \Z^d} \mu_0(x)$. Fix also some threshold $m>0$,
and inductively define a triple $(V_k, u_k, \mu_k)$ for integer $k\geq 0$ as follows. When $k=0$ set $V_0 = \mathrm{supp} \mu_0$
and $u_0 \equiv 0$; we now define the passage from $k$ to $k+1$.
At a time $k$ call a particular site $x\in \Z^d$ \emph{unstable} if either of the following two (mutually exclusive) conditions
hold:
\begin{itemize}
 \item[(a)] $\mu_k(x)>m $,
\smallskip 
 \item[(b)] $0<\mu_k(x)\leq m  $ and $u_k(x) > \frac 1m {n^{ \frac 2d }}$.
\end{itemize}

For an unstable site $x$ we define an \emph{excess} of mass denoted by $\mathcal{E}(x)$
and being equal to $\mu_k(x) - m  $ if $x$ is unstable according to (a),
and to $\mu_k(x)$, if (b) holds for $x$.
Then, a \emph{toppling} of an unstable site $x $ is the procedure
of redistribution of the excess $\mathcal{E}(x)$ evenly among its $2d$ lattice neighbours.

Thus, at each discrete time $k\geq 0$ we identify an unstable site, say $x_k$ and topple it. 
Then, we set $V_{k+1} : = V_k \cup \{ y\in \Z^d: \ y\sim x \} $, 
$u_{k+1}(x) = u_k (x)  + \delta_x \mathcal{E}(x) $, $x\in \Z^d$, and 
\begin{equation}\label{mu-k+1-def-A}
 \mu_{k+1} (y) =  \begin{cases}  \mu_k(x) - \mathcal{E}(x), &\text{if $y=x$}, \\ 
 \mu_k(y) + \frac{1}{2d} \mathcal{E}(x)   ,&\text{if  $ y\sim x    $}, \\
 \mu_k (y), &\text{otherwise}.    \end{cases}
\end{equation}
The process terminates if there are no unstable sites.
It follows easily from the definition of the process, that for any $k\in \Z_+$ we have
\begin{equation}\label{Laplace-1}
 \Delta^1 u_k(x) = \mu_k (x ) - \mu_0(x), \qquad x\in \Z^d.
\end{equation}

\bigskip

Let $T=\{x_k\}_{k=0}^\infty \subset \Z^d$ be any infinite toppling sequence, and $u_k$ and $\mu_k$ be the odometer, and mass distribution
at time $k$ as defined above. Obviously, for each fixed $x\in \Z^d$ the sequence $\{u_k(x)\}$ is non-decreasing,
and hence there is a limit (odometer) $u (x) : = \lim_{k\to \infty} u_k(x)$, possibly infinite.
Moreover, if $u $ is finite at a given $x$ and its lattice neighbours, then passing to the limit in \eqref{Laplace-1}
we see that $\mu_k(x)$ is also convergent; let $\mu(x)$ be its limit in such case.
With this notation, we fix the following:

\begin{definition}\label{defn-stable-seq}{\normalfont (Stabilizing sequence)}
Call a sequence $T$ stabilizing, if the limit odometer $u$ is finite everywhere on $\Z^d$, and the final configuration $(u, \mu)$
is stable.
\end{definition}

For convenience, we will allow a toppling sequence $T$ to include stable sites as well. Thus, if a certain site $x$ included in $T$
is assigned to topple at time $k$, but is stable, then toppling $x$ does not change anything.
In other words the toppling procedure for stable vertices is idle, and the triple $(V_k, u_k, \mu_k)$
simply remains the same for time $k+1$.

\bigskip

\begin{figure}[!htbp]
\centerline{%
\includegraphics[width=1.5in]{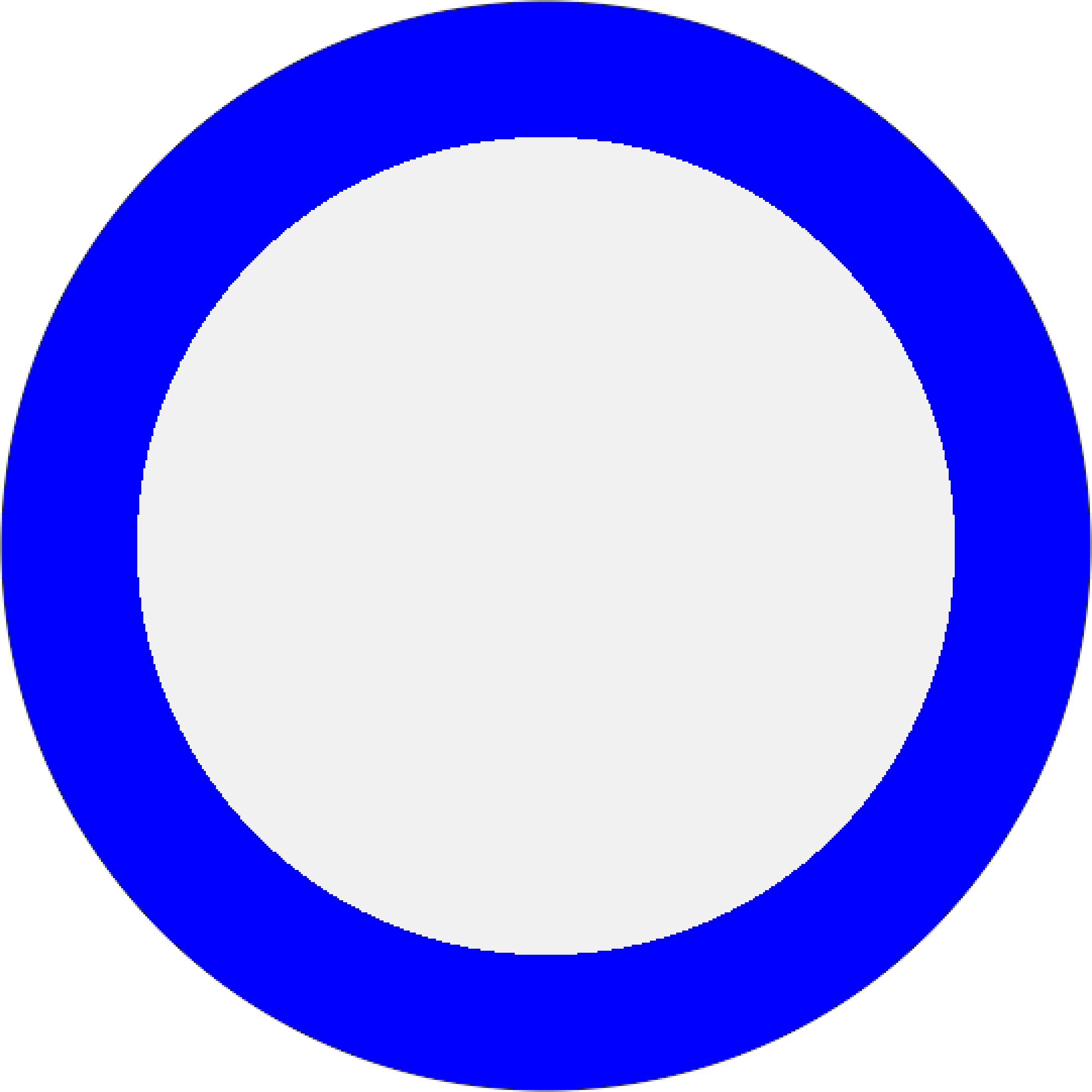}
\hspace{0.2cm}
\includegraphics[width=1.5in]{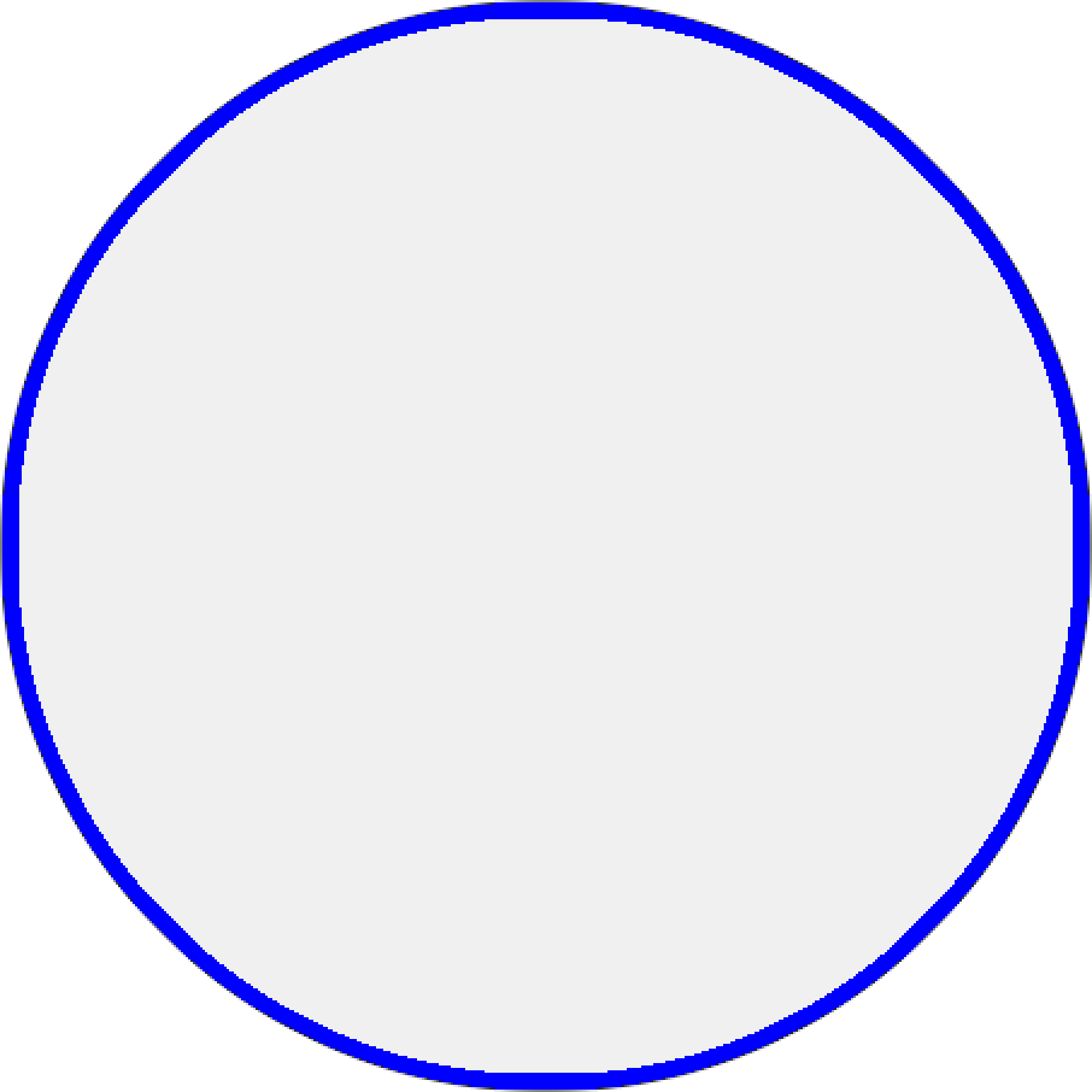}
}
\vspace{0.2cm}
\centerline{%
\includegraphics[width=1.5in]{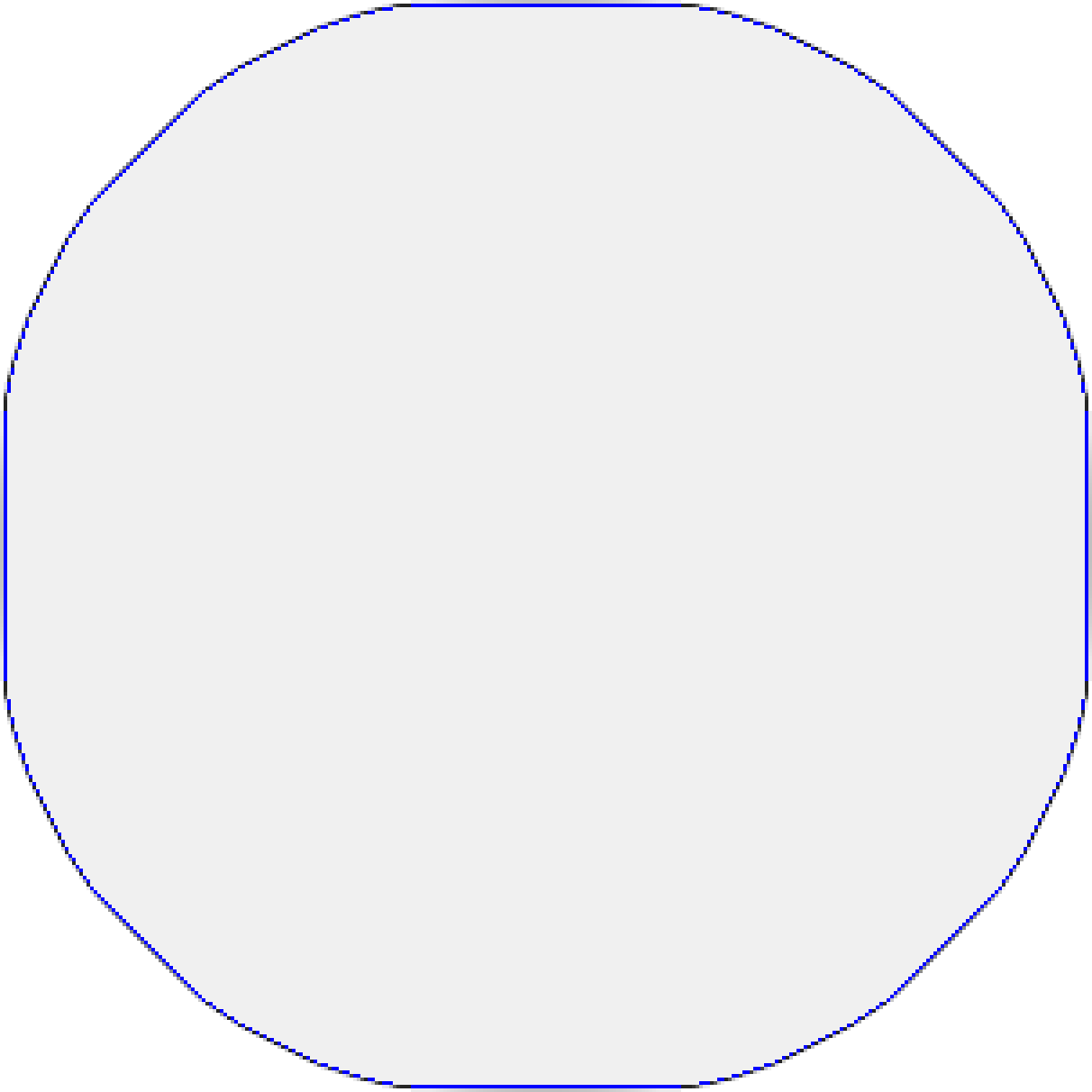}
\hspace{0.2cm}
\includegraphics[width=1.5in]{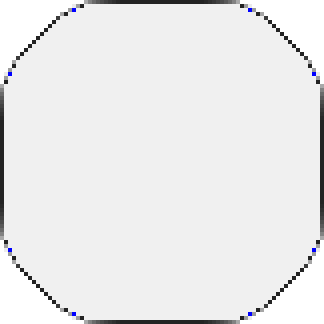}
}%
\caption{\footnotesize{From top left to bottom right are the 
final configurations of the sandpiles on $\Z^2$ starting from mass $10^6$ at the origin,
and with threshold $m$ equal to $10$, $100$, $1 \ 000$ and $5 \ 000$
respectively. The colouring scheme is the same as in Figure \ref{Fig-2points},
and the size of the sandpile regions are scaled to have the same resolution as an image.
In reality, the regions in this figure for larger values of the threshold $m$, 
have smaller size in $\Z^2$.
On the boundary, darker colors indicate higher concentration of mass.
One observes, a change from a robust spherical shape to a polygonal shape of the \textbf{BS}
as the threshold increases.}}
\label{Fig-many}
\end{figure}

\noindent \textbf{Main results and organisation of the paper.} 
Our main results concern general analysis in the
discrete space $\Z^d$ of the singularly perturbed sandpile model
defined in subsection \ref{sub-sec-model}, and the existence of the scaling limit of the model
generated from a single source. More precisely, in Section \ref{sec-basic} we show
that the model is well-posed, in a sense that it reaches a stable configuration given that all unstable sites
have been toppled infinitely many times (Proposition \ref{prop-terminate}), moreover, this stable configuration 
does not depend on the order of topplings (Proposition \ref{prop-abelian}).
We then show that the odometer function is the smallest element in the class of all
super-solutions to discrete PDE problem associated with the sandpile (Lemma \ref{Lem-odometer-is-u-star}).
Specifying the analysis for initial distributions concentrated at a single point of $\Z^d$,
we show that the odometer function is monotone in the lattice directions (Theorem \ref{Thm-monotonicity}).
We then estimate the size of the annulus where the mass is concentrated (Lemma \ref{Lem-size-of-mass-region}), and
of the region which is free of mass (Proposition \ref{prop-size-of-mass-FREE-region}) (cf. Figure \ref{Fig-many}). 
Finally, we complete Section \ref{sec-single-source} by proving that the odometer
is Lipschitz regular uniformly with respect to threshold $m$ (Proposition \ref{prop-Lip}) and is $C^{1,1}$
with norm depending linearly on $m$ (Proposition \ref{prop-1-step-C11-log}).

In Section \ref{sec-scaling-lim} we show that the sandpile shapes generated
by an initial distribution of the form $n\delta_0$ and fixed threshold $m$,
converge to a ball under a scaling limit, as $n\to \infty$, 
moreover, the limiting odometer solves a 
singular perturbation problem with Bernoulli type free boundary condition (Theorem \ref{Thm-scaling-limit-fixed-M}).
We also prove, that there is a scaling limit for a subsequence of the sandpiles, as the
mass threshold $m$ tends to infinity slowly with $n$ (Theorem \ref{Thm-scaling-limit-inf-M}).

The appendix to the paper, proves a uniqueness result for solutions to singular perturbation problem with Bernoulli type free boundary condition. This result is being used for establishing the existence of the scaling limit of the model.

%%%%%%%%%%%%%%%%%%%%%%%%%%%%%%%%%%%%
%%%%%%%  Section    %%%%%%%%%%%%%%%%
%%%%%%%%%%%%%%%%%%%%%%%%%%%%%%%%%%%%

\section{Basic properties of the model}\label{sec-basic}

\subsection{Termination and abelian property}

The aim of this section is to prove the well-posedness of the model (in a sense to be made precise below),
and to present some basic properties of it.

It is not hard to realize that the process defined in subsection \ref{sub-sec-model}
will not stabilise after finitely many steps (modulo the trivial cases).
Moreover, vertices in the set of visited sites may become unstable infinitely many times
during the lifetime of the process. This, as in the case of BS \cite{AS}, brings us to he following class of the toppling
sequences, which leaves a chance for the process to reach a stable state.

\begin{definition}\label{defn-infint-seq}{\normalfont (Infinitive sequence)}
Call a sequence $T=\{x_k\}_{k=1}^\infty \subset \Z^d$ infinitive (on a set $E\subset \Z^d$)
if any $x\in \Z^d$ (correspondingly $x\in E$) appears in $T$ infinitely often.
\end{definition} 

If not stated otherwise, in this section $\mu_0$ is any initial distribution of mass,
with total mass $n >0$, and $m>0$ stands for the threshold of the sandpile process.
Note, that $n$ and $m$ are not necessarily integers.

The next two propositions show that any initial distribution can be stabilized uniquely by an infinitive toppling sequence.
Proofs follow closely to their corresponding results from our paper \cite{AS},
without any additional difficulties, but we include some details here for the sake of completeness.

\begin{prop}\label{prop-terminate}{\normalfont{(Termination of the process)}}
If a toppling sequence  $T$ is infinitive, then it is stabilizing according to Definition \ref{defn-stable-seq}.
\end{prop}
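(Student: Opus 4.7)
The approach is to bound every $u_k$ pointwise by a fixed, finitely supported, non-negative super-solution $\phi$; monotonicity of $u_k(x)$ in $k$ then forces convergence to a finite limit, and the infinitive hypothesis on $T$ forces the limit configuration $(u, \mu)$ to be stable.  More precisely, I would construct $\phi: \Z^d \to \R_+$ of bounded support together with a \emph{stable} mass distribution $\mu^*$, meaning $\mu^*(x) \le m$ everywhere and $\mu^*(x) > 0 \Rightarrow \phi(x) \le r$ where $r := n^{2/d}/m$, such that $\Delta^1 \phi = \mu^* - \mu_0$ on $\Z^d$.  Following the scheme used for the boundary sandpile in \cite{AS}, one picks a large discrete ball $B_R \supset \supp \mu_0$, places mass $m$ on a thin outer annular shell of $B_R$ until the total mass is $n$, sets $\mu^* = 0$ elsewhere, and lets $\phi$ solve the corresponding discrete Dirichlet problem on $B_R$ (extended by zero).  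For $R$ large enough, the Green function asymptotics \eqref{g-x-y} ensure both $\phi \ge 0$ on $B_R$ and $\phi \le r$ on $\supp \mu^*$.

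With $\phi$ in hand, induction on $k$ proves $u_k(x) \le \phi(x)$ for every $x$.  The base case $k=0$ is immediate.  For the inductive step, assume the bound at time $k$ and let $x_k$ be the site toppled.  Subtracting $\Delta^1 u_k = \mu_k - \mu_0$ from $\Delta^1 \phi = \mu^* - \mu_0$ at $x_k$ and combining with $u_{k+1}(x_k) - u_k(x_k) = \mathcal{E}_k(x_k)$ and $\mu_{k+1}(x_k) = \mu_k(x_k) - \mathcal{E}_k(x_k)$ yields the key identity
\[
\phi(x_k) - u_{k+1}(x_k) \;=\; \frac{1}{2d}\sum_{y \sim x_k}(\phi - u_k)(y) \;+\; \mu_{k+1}(x_k) - \mu^*(x_k).
\]
The first summand is $\ge 0$ by the inductive hypothesis.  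In instability case (a) one has $\mu_{k+1}(x_k) = m \ge \mu^*(x_k)$, so the whole right-hand side is $\ge 0$.  In instability case (b), $u_k(x_k) > r$ combined with the inductive bound $\phi(x_k) \ge u_k(x_k)$ gives $\phi(x_k) > r$, which by the compatibility condition forces $\mu^*(x_k) = 0$; and $\mathcal{E}_k(x_k) = \mu_k(x_k)$ gives $\mu_{k+1}(x_k) = 0$, so the last two terms vanish.  Either way $u_{k+1}(x_k) \le \phi(x_k)$, closing the induction.

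Setting $u(x) := \lim_k u_k(x)$, the uniform bound $u \le \phi < \infty$ gives a finite limit everywhere, and passing to the limit in \eqref{Laplace-1} yields $\mu(x) := \lim_k \mu_k(x)$ with $\Delta^1 u = \mu - \mu_0$.  If some $x_0$ were unstable in the limit configuration $(u, \mu)$, the same instability condition would hold at $x_0$ for all sufficiently large $k$, and each of the infinitely many topplings of $x_0$ prescribed by $T$ would emit an excess bounded below by a fixed positive constant, contradicting $u_k(x_0) \nearrow u(x_0) < \infty$.  The main obstacle is the construction of the super-solution $\phi$ compatible with a stable $\mu^*$: one must tune $R$ and the shell thickness so that the Green-function representation of $\phi$ is simultaneously non-negative on $B_R$ and bounded by $r$ on $\supp \mu^*$, for \emph{every} admissible initial datum $\mu_0$.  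Once this super-solution is produced, the induction and the limit-stability argument are essentially discrete book-keeping.
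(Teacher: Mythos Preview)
Your approach is genuinely different from the paper's and is in the spirit of the least-super-solution machinery developed later in Section~2.  The paper never builds a barrier: it argues directly that $|\partial V_k|$ is bounded uniformly in $k$.  The observation is that if $x\in\partial V_k\setminus V_0$ then $u_k(x)=0$, and the site $y\sim x$ that first sent mass to $x$ had $u(y)=0$ at that moment, so $y$ could only have toppled by rule~(a); hence the $1$--box around $x$ carries mass at least $m/(2d)$.  A box--counting argument then gives $|\partial V_k|\le (2d)^2 n/m$, and a hyperplane scan (from \cite{AS}) converts this into a uniform bound on $V_k$.  This avoids any PDE construction and uses only mass conservation.

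Your barrier strategy is reasonable in principle, but the construction you sketch has gaps beyond what you flag.  If $\phi$ solves the Dirichlet problem on $B_R$ and is extended by zero, then $\Delta^1\phi=\mu^*-\mu_0$ fails on $\partial B_R$ (it equals $\tfrac{1}{2d}\sum_{y\sim x}\phi(y)\ge 0$ there), and your key identity breaks precisely where it matters: a boundary site can receive enough mass from interior neighbours to become type--(a) unstable, after which $u_{k+1}>0=\phi$.  You would need to control the discrete normal derivative of $\phi$ on $\partial B_R$, or equivalently place $\mu^*$ on $\partial B_R$ itself with an appropriate density.  Separately, non-negativity of $\phi$ is not automatic: on the shell $\Delta^1\phi=m>0$, so the maximum principle pushes $\phi$ below its boundary value $0$ unless the superharmonic contribution from $-\mu_0$ in the core dominates; verifying this requires a genuine computation with the Green kernel, not just the asymptotics.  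Finally, the stability argument at the end is not quite complete: in the borderline case $\mu(x_0)=m$, $u(x_0)>r$, the excess at each type--(a) toppling is $\mu_{k}(x_0)-m$, which can tend to zero rather than being bounded below, so a slightly sharper contradiction (e.g.\ tracking that $\mu$ drops to exactly $m$ after each such toppling and then would trigger a type--(b) emission of size $m$ unless a neighbour intervenes) is needed.

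In short, the paper's route is substantially more elementary here.  Your route could be made to work, and would have the advantage of giving an explicit pointwise bound $u\le\phi$ useful for later estimates, but the barrier construction is a real piece of work rather than the book-keeping you suggest.
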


\begin{proof}
As in the proof of \cite[Proposition 2.1]{AS} it will be enough to show that the set of visited
sites $V_k$ stays bounded uniformly in $k$, for which it suffices to obtain a uniform bound on the number of elements in $\partial V_k$
(see the scanning procedure by hyperplanes as described in \cite[Proposition 2.1]{AS}).

Let $k\geq 1$ be given, and fix any $x\in \partial V_k \setminus V_0 $, clearly $u_k(x) =0$.
Assume $y\sim x$ is the generator of $x$, i.e. the first time $x$ was visited by the process, was due to the toppling of $y$,
say at time $i<k$. Since $x\sim y $  is not visited at time $i$, it follows that $y\in \partial V_i$, in particular
$u_{i-1}(y)=0$, and hence $y$ can topple only according to condition (a) for instability as defined in subsection \ref{sub-sec-model},
and hence $y$ must carry mass at least $m$.
This means that at time $k$, when $x\in \partial V_k$, the 1-neighbourhood of $x$ has total mass at least $m/(2d)$.

For each $x\in \Z^d$ let $\mathrm{B}(x) $ be a 1-box centred at $x$, i.e. $\mathrm{B}(x) = \{x\}\cup \{y\in \Z^d: \ y \sim x\}$.
On one hand we proved that the total mass carried
by vertices of $\mathrm{B}(x) $ where $x\in \partial V_k \setminus V_0$ is  at least $m/(2d)$.
On the other hand, each $x\in \partial V_k \setminus V_0$ can be counted at most $2d$ times in each box $\mathrm{B}$,
and hence, we conclude that the total number of boundary points is bounded above by $ (2d)^2 \frac{n}{ m } $ .
Since the bound is uniform in $k$, the proof of this proposition is complete.
\end{proof}

\begin{prop}\label{prop-abelian}{\normalfont{(Abelian property)}}
For any two infinitive toppling sequences $T_1$ and $T_2$ the corresponding final configurations are the same. 
\end{prop}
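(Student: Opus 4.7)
Following \cite[Proposition~2.2]{AS}, the plan is a discrete least-action / Diaconis--Fulton argument: I will show that the limit odometer $u^T$ of any infinitive sequence $T$ coincides with the pointwise infimum of a canonical class of ``stabilizing functions'' that depends only on the pair $(\mu_0,m)$. Since this class is intrinsic to the initial data, $u^T$ is the same for every infinitive $T$, and equality of the limit mass distributions then follows immediately from \eqref{Laplace-1}.

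Concretely, let $\cS$ denote the collection of non-negative, finitely supported $w:\Z^d \to \R$ for which $\mu_w := \mu_0 + \Delta^1 w \geq 0$ and the pair $(w, \mu_w)$ is stable in the sense of Subsection~\ref{sub-sec-model}, i.e.\ no site satisfies condition (a) or (b). By Proposition~\ref{prop-terminate}, $u^T \in \cS$ for any infinitive $T$. I would then prove, by induction on the toppling index $k$, that $u_k^T \leq w$ pointwise for every fixed $w \in \cS$. The base case is trivial since $u_0^T \equiv 0$. For the inductive step, write $v_k := w - u_k^T \geq 0$; subtracting \eqref{Laplace-1} for $u_k^T$ from the same identity applied to $w$ yields, at the toppled site $x_k$,
\[
\mu_k^T(x_k) \;=\; \mu_w(x_k) + v_k(x_k) - \frac{1}{2d}\sum_{y \sim x_k} v_k(y).
\]
Combined with $v_k \geq 0$ and the stability of $(w,\mu_w)$, this bounds the emitted excess $\mathcal{E}(x_k)$ from above by $v_k(x_k)$, so $u_{k+1}^T \leq w$ as well. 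Sending $k \to \infty$ gives $u^T \leq w$ for all $w \in \cS$; choosing $w = u^{T_2}$ and then $w = u^{T_1}$ yields $u^{T_1} = u^{T_2}$, and \eqref{Laplace-1} then forces $\mu^{T_1} = \mu^{T_2}$.

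The main obstacle is a uniform treatment of the two instability modes in the inductive step. Case (a) is straightforward: stability of $w$ gives $\mu_w(x_k) \leq m$, and the displayed identity together with $v_k \geq 0$ yields $\mathcal{E}(x_k) = \mu_k^T(x_k) - m \leq v_k(x_k)$. Case (b) is the delicate one: since $u_k^T(x_k) > n^{2/d}/m$ and $u_k^T(x_k) \leq w(x_k)$ by the induction hypothesis, one has $w(x_k) > n^{2/d}/m$, which by stability of $(w,\mu_w)$ precludes $\mu_w(x_k) \in (0,m]$ and therefore forces $\mu_w(x_k)=0$. The displayed identity then gives $\mathcal{E}(x_k) = \mu_k^T(x_k) \leq v_k(x_k)$, as needed. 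Beyond this dichotomy the argument is essentially mechanical and parallels \cite{AS}.
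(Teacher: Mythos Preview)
Your argument is correct. The inductive step handles both instability modes cleanly, and the case~(b) analysis---pushing the threshold inequality from $u_k^T(x_k)$ up to $w(x_k)$ via the induction hypothesis, then using stability of $(w,\mu_w)$ together with $\mu_w\geq 0$ to force $\mu_w(x_k)=0$---is exactly the right observation.

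The paper's own proof takes a slightly different route. Rather than introducing the class $\cS$, it compares the two sequences directly: fixing $T_1=\{x_k\}$ and the \emph{final} odometer $u_2$ of $T_2$, it proves by induction that $u_2(x_k)\geq u_{1,k}(x_k)$ for every $k$. The induction hypothesis there is weaker (only the inequality at the successively toppled sites $x_i$, $i<k$), so an extra step is needed to upgrade it to $u_2(x)\geq u_{1,k}(x)$ for all $x\neq x_k$ by tracking the last toppling time of each site. After that, the paper uses the same Laplacian identity and the same (a)/(b) dichotomy you use. Your least-action packaging keeps the stronger invariant $u_k^T\leq w$ everywhere from the outset, which makes the induction shorter and, as a by-product, immediately yields that the odometer is the pointwise minimum of $\cS$; the paper derives the analogous characterisation separately, later, via its super-solution class (Definition~\ref{def-super-sol} and Lemmas~\ref{Lem-smallest-super}--\ref{Lem-odometer-is-u-star}). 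So the two arguments share the same core mechanism but are organised differently; yours is somewhat more economical.
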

\begin{proof}
The scheme of the proof follows that of \cite[Proposition 2.2]{AS}.
In view of the previous proposition, each $T_i$
is stabilizing and has well-defined odometer function, call it $u_i$.
We will prove that $u_2(x) \geq u_1(x)$ for any
$x\in \Z^d$, which by symmetry will imply the desired result.

For $i=1,2$ by $u_{i,k}$ denote the odometer function and by $\mu_{i,k}$ the distribution 
corresponding to the sequence $T_i$ after the $k$-th toppling occurs.
Let $T_1= \{ x_k \}_{k=1}^\infty$,
we now show that 
\begin{equation}\label{ineq-abelian}
u_2(x_k ) \geq u_{1,k} (x_k ), \qquad k=1,2,... \ . 
\end{equation}
Observe that (\ref{ineq-abelian}) is enough for our purpose
since the site $x_k $ appears infinitely often in $T_1$ and $u_{1,k} $ converges to $u_1$ pointwise as
$k$ tends to infinity. 
Thus, in what follows we prove (\ref{ineq-abelian}) which we will do by induction on $k$.

The base case of induction, i.e. when $k=1$, is trivial. We now assume that
(\ref{ineq-abelian}) holds for any time $1\leq i<k$, and prove it for time $k$.
First we eliminate the trivial case, when $x_k $ in $T_1$ is stable at time $k$.
Indeed, in such scenario, we get
\begin{equation}\label{non-legal}
u_{1,k}( x_k ) = u_{1,k-1}( x_k ).
\end{equation}
So, if $x_k $ has never toppled prior to time $k$, then $u_{1,k-1}( x_k )=0$
and we are done. Otherwise, if $i\leq k-1$ is the last time $x_k$
has toppled in $T_1$, then clearly $x_i =x_k $,
and hence, using the inductive hypothesis, we conclude
$$
u_2(x_k ) =u_2(x_i ) \geq u_{1,i}(x_i ) = u_{1,k-1}(x_i ) =  u_{1,k}(x_k),
$$
where the last equality follows from (\ref{non-legal}).
This completes the induction step for the case when $x_k$ was already stable.

We next proceed to the case when toppling $x_k$ in $T_1$ is not stable at time $k$.
First we will prove that for any $x\neq x_k$  one has
\begin{equation}\label{u2-geq-u1k}
u_2(x)\geq u_{1,k} (x).
\end{equation}
There are two possible sub-cases, either $x$ was never toppled in $T_1$ up to time $k$,
which implies $u_{1,k} (x)=0$ and we get (\ref{u2-geq-u1k}),
or  $x$ was toppled at some time before $k$.
In the latter case let $i\leq k$ be the last time $x$ has toppled prior to time $k$.
Observe that $i<k$ since $x\neq x_k $. Then we have
$$
u_{1,k} (x) = u_{1,i}(x) \leq u_2(x),
$$
where the second inequality follows by inductive hypothesis.
We thus have proved (\ref{u2-geq-u1k}) for all $x\neq x_k $.
Consider the following inequality
\begin{equation}\label{mu2-dist-vs-mu1k}
\mu_2( x_k ) \leq \mu_{1,k} (x_k ).
\end{equation}
To complete the induction step suppose for a moment that (\ref{mu2-dist-vs-mu1k}) holds
true. Then
\begin{multline*}
\frac{1}{2d} \sum\limits_{y \sim x_k  } u_2(y)  - u_2(x_k ) = 
  \Delta^1 u_2( x_k ) =\mu_2(x_k ) - \mu_0(x_k ) \stackrel{ (\ref{mu2-dist-vs-mu1k}) }{\leq} \\ 
  \mu_{1,k} (x_k) - \mu_0(x_k ) = 
  \Delta^1 u_{1,k}(x_k) =\frac{1}{2d} \sum\limits_{y \sim x_k } u_{1,k}(y)  - u_{1,k}( x_k ).
\end{multline*}
Rearranging the first and the last terms leads to
$$
u_2( x_k ) - u_{1,k}( x_k ) \geq \frac{1}{2d}\sum\limits_{y \sim x_k } \left( u_2(y) - u_{1,k} (y) \right) \geq 0,
$$
where the last inequality is due to (\ref{u2-geq-u1k}). This completes the induction.
Thereby, to finish the proof of the proposition, we need to verify (\ref{mu2-dist-vs-mu1k})
which we do next.

Recall that $x_k $ is unstable, and hence there are two possible reasons for $x_k$ to topple in $T_1$ at time $k$. Namely,
\begin{itemize}
 \item[(a)] $\mu_{k-1,1} (x_k ) >m $ 
 \item[(b)] $0< \mu_{k-1,1} (x_k ) \leq  m  $ and $u_{1,k-1}(  x_k ) >\frac 1m n^{\frac 2d}  $.
\end{itemize}
If we are in case (a), then $\mu_{1,k}( x_k ) = m$ in view of the toppling rule, and hence (\ref{mu2-dist-vs-mu1k})
follows in view of the stability of $\mu_2$.
Next, if $x_k $ is set to topple because of (b), then $u_{1,k-1}( x_k  ) > \frac 1m n^{\frac 2d}$.
But this means that
$x_k$ had already toppled prior to time $k$, and we let $i<k$ be the largest time before $k$, when it has toppled;
in particular $x_k = x_i$.
Applying the inductive hypothesis we arrive at
$$
u_2(x_k ) = u_2(x_i  ) \geq   u_{1,i}( x_i ) = u_{1,i}( x_k ) >\frac 1m n^{\frac 2d}.
$$
From here, and the stability of $\mu_2$ we obtain $\mu_2 (x_k )=0 $, completing the proof of
\ref{mu2-dist-vs-mu1k}), and hence the proposition is proved.

\end{proof}

A simple consequence of the abelian property, is the following
symmetry for a point mass concentrated at the origin.
Consider the set of vectors 
$$
\mathcal{N} : = \{e_i, \ e_i \pm e_j: \ 1\leq i\neq j \leq d \},
$$
where $e_i\in \Z^d$ is the $i$-th element of the standard basis. 
Let $T$ be a hyperplane through the origin of $\Z^d$ and with a normal collinear with some element of $\mathcal{N}$.
Then, any sandpile generated by initial distribution of the form $n\delta_0$, is symmetric with respect to $T$. Namely, if $u$ is the odometer,
and $x\in \Z^d$ is arbitrary,  for $x^* \ -$the mirror reflection of $x$ with respect to $T$ (which is obviously in $\Z^d$ in view of the choice of $T$)
one has 
\begin{equation}\label{od-symm}
u(x)  = u(x_*). 
\end{equation}

For coordinate directions, i.e. when $T$ has normal in the direction of some $e_i$, the proof follows
by symmetrization of the toppling sequence (see \cite[Corollary 2.4]{AS}).
In case of the directions $e_i \pm e_j$, the claim follows by noticing that the symmetry with respect
to $T$ is a composition of two reflections with respect to coordinate axes.

\bigskip

The next result, which is yet another easy corollary of the abelian property,
will be used in the proof of Lemma \ref{Lem-odometer-is-u-star}.

\begin{lem}\label{Lem-Abelian-barrier}
Assume we are given a function $u_b:\Z^d\to \R_+$ and an
infinitive toppling sequence $T=\{x_k\}_{k=0}^\infty$.
Suppose in addition, that at any time $k\geq 0$ we require the total (cumulative) emissions
of mass from $x_k$ to NOT exceed $u_b(x_k)$.
Under this restriction, if the limiting configuration is stable, 
then it coincides with the limiting (stable) configuration of the sandpile.
\end{lem}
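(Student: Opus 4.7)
The plan is to show that the restricted and unrestricted stable odometers coincide by running the inductive argument of Proposition \ref{prop-abelian} in both directions, with the restricted sequence $T$ taking on the role of the stabilizing sequence $T_2$ in that proof. Denote by $\tilde u,\tilde\mu$ the stable limit of the restricted process, and by $u,\mu$ the sandpile odometer and mass. The only structural features of $T_2$ that the proof of Proposition \ref{prop-abelian} actually exploits are that $T_2$ is infinitive and that its final configuration is stable; both are granted for $T$ by hypothesis, so the stabilization side of the argument never notices the barrier $u_b$.

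For the inequality $\tilde u\geq u$, I would pick any unrestricted infinitive sequence $T^{*}=\{y_k\}$ with running odometer $u^{*}_k$ and limit $u$, and induct on $k$ exactly as in Proposition \ref{prop-abelian}, placing $(\tilde u,\tilde\mu)$ in the role of $(u_2,\mu_2)$. When $y_k$ topples in $T^{*}$ under condition (a), one uses $u^{*}_k(y_k)=m$ together with stability of $\tilde\mu$ to conclude $\tilde\mu(y_k)\leq m=\mu^{*}_k(y_k)$. Under condition (b), the inductive hypothesis applied at the last previous toppling of $y_k$ in $T^{*}$ promotes $u^{*}_{k-1}(y_k)>\frac{1}{m}n^{\frac{2}{d}}$ to $\tilde u(y_k)>\frac{1}{m}n^{\frac{2}{d}}$, and stability of $\tilde\mu$ then forces $\tilde\mu(y_k)=0$. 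Comparing discrete Laplacians at $y_k$ closes the induction exactly as in Proposition \ref{prop-abelian}.

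For the reverse inequality $u\geq\tilde u$, I would swap the roles, inducting over $T=\{x_k\}$ with $(u,\mu)$ in the position of $(u_2,\mu_2)$. The single genuinely new possibility is that $x_k$ is unstable in $T$ at time $k$ but is blocked from toppling by the barrier $u_b$; in that event the restricted odometer is frozen at that step, $\tilde u_k(x_k)=\tilde u_{k-1}(x_k)$, exactly as in the idle-at-a-stable-site branch of the original argument, so the same ``look at the last toppling time of $x_k$'' device delivers the required inequality. When a genuine toppling does occur at $x_k$, cases (a) and (b) go through verbatim, this time invoking stability of $\mu$ to bound $\mu(x_k)$ from above. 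Combining the two directions yields $\tilde u=u$ pointwise on $\Z^d$, and $\tilde\mu=\mu$ then follows from the identity $\mu=\mu_0+\Delta^1 u$.

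The one point requiring genuine care is bookkeeping in the second direction: one must check that the auxiliary inequality $u(y)\geq \tilde u_k(y)$ for $y\neq x_k$ is not spoiled by the barrier-blocked steps that can appear along $T$. This reduces to the observation that $\tilde u_k(y)$ with $y\neq x_k$ is unchanged at step $k$ regardless of whether the barrier intervened, so the ``last toppling time of $y$ in $T$'' trace-back lands on a legitimate (unblocked) toppling at which the inductive hypothesis has already been verified. Once this is in place, the induction proceeds with no further surprises.
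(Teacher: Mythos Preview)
Your plan is sound and would succeed, but it takes a longer road than the paper's. The paper dispenses with the double induction entirely by observing that the barrier can never actually bind: if at some step $x_k$ is unstable yet the barrier prevents the full excess from being emitted, then after that step the odometer at $x_k$ sits at its cap $u_b(x_k)$, no further emission from $x_k$ is ever possible, and incoming mass can only increase $\tilde\mu(x_k)$; hence $x_k$ remains unstable in the limit, contradicting the hypothesis. Thus the restricted process is literally the unrestricted process run along $T$, and Proposition~\ref{prop-abelian} applies as a black box. Your approach re-opens the machinery of that proposition and runs it twice, which works but is more labor.

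One imprecision in your plan deserves attention. You assert that at a barrier-blocked step the odometer is frozen, $\tilde u_k(x_k)=\tilde u_{k-1}(x_k)$, and treat this like the idle-at-a-stable-site branch. Under the natural reading of the restriction (emit as much as the cap allows), a \emph{partial} emission may occur, raising $\tilde u_k(x_k)$ to $u_b(x_k)$ without stabilising $x_k$; your trichotomy stable\,/\,blocked\,/\,genuine-topple then misses this case. The clean fix is precisely the paper's observation that any blocking at an unstable site forces permanent instability, so under the stability hypothesis no such steps arise---but once you have that, the second induction is unnecessary and you recover the paper's proof. (Alternatively, one can check that the Laplacian comparison survives partial emissions, since in case (a) one still has $\tilde\mu_k(x_k)\ge m\ge\mu(x_k)$ and in case (b) $\tilde\mu_k(x_k)\ge 0=\mu(x_k)$; so the induction can be pushed through, at the cost of extra bookkeeping.)
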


\begin{proof}
For each $k\in \Z_+$ let $u_k$ be the odometer function under the additional restriction
imposed by the barrier $u_b$, and let $\mu_k$ be the mass distribution at time $k$.
We will show, that for each $k$ toppling $x_k$ makes the site $x_k$
stable, meaning that the requirement $u_k(x) \leq u_b(x)$ for all $k\in \Z_+$ and all $x\in \Z^d$,
can be dropped. Then, the claim of the lemma will follow by the abelian
property proved in Proposition \ref{prop-abelian}.

Fix $k\geq 0$, then if $x_k$ is stable,
no toppling will be performed at time $k$, otherwise, there are two mutually exclusive reasons
for instability of $x_k$ at time $k$. Namely, 
\begin{itemize}
 \item[(a)] $\mu_{k} (x_k ) > m $ 
 \smallskip
 \item[(b)] $0< \mu_k (x_k ) \leq  m  $ and $u_k(  x_k ) > \frac 1m n^{\frac 2d} $.
\end{itemize}

Assume we are in case (a). Then, according to the sandpile rule defined in subsection \ref{sub-sec-model}
we need to topple the excess from $m$, i.e. $\mu_k (x_k ) - m$. Following the restriction
imposed by the barrier $u_b$, the maximum amount of mass we can move out from $x_k$
equals $u_b(x_k) - u_k(x_k) $. Now assume that 
$$
u_b(x_k ) <  \mu_k(x_k) - m + u_k ( x_k ) ,
$$
i.e. we cannot move the entire excess. Set 
$$
\e_0 : =  \mu_k (x_k) - m + u_k ( x_k ) - u_b(x_k )  >0.
$$
Then, invoking the $k$-th toppling, we will move $u_b(x_k) - u_k(x_k)$, meaning that $u_{k+1}(x_k ) = u_b(x_k)$.
Hence,
$$
\mu_{k+1}(x_k) - m = \mu_k(x_k) - m + u_k(x_k) - u_b(x_k) =\e_0>0,
$$
and since no further toppling of $x_k$ will be allowed, the last expression shows that
that $x_k$ remains unstable in the limiting configuration, which is a contradiction.
We thus obtain, that in case (a) the usual toppling rule is not affected by the existence of a barrier $u_b$.

The same reasoning gives the claim in case (b) too, and hence the proof of the lemma is completed.
\end{proof}

\bigskip

Having a unique stable configuration for the sandpile, 
for fixed initial distribution $\mu_0 $ we set $u:\Z^d \to \R_+$, $V\subset \Z^d$,
and $\mu:\Z^d\to \R_+$ for its odometer, set of visited sites, and the final distribution
respectively. In particular we have 
$$V= \supp\mu_0 \cup \{u>0\} \cup \partial \{u>0\} .$$
Let us also see what is the discrete PDE problem solved by odometer.
If $x\in \Z^d$ and $0<u(x)\leq \frac 1m n^{2/d}$, then $x$ has always toppled according to rule (a),
and hence $\mu (x)  = m$. Next, if $u(x)> \frac 1m n^{2/d}$, 
then eventually $x$ starts
to topple according to (b), and hence we get $\mu(x) = 0$.
Finally, if $x\in \partial \{u>0\}$ then $u(x) =0$, and hence $x$ has never toppled, which implies
$\mu \leq m$. Summarising the discussion, we conclude
\begin{equation}\label{eq-odometer1}
\Delta^1 u(x) + \mu_0(x) \leq m, \qquad x\in \Z^d,              
\end{equation}
\begin{equation}\label{eq-odometer2}
\Delta^1 u(x) + \mu_0(x)  = m \mathbb{I}_{ \left\{ 0<u\leq \frac 1m n^{ 2/d}  \right\} } (x) ,  \qquad x\in \{u>0\},               
\end{equation}
where $\mathbb{I}$ stands for the characteristic function of a set.
Observe, that if the initial distribution $\mu_0$ is already stable, then the odometer is identically zero.

\subsection{The smallest super-solution}
We define a class of super-solutions to our sandpile model.
\begin{definition}\label{def-super-sol}\normalfont{(Super-solutions)}
Let $\mu_0:\Z^d \to \R_+$ be a given initial distribution  with total mass $n$, and let $m > 0$ be fixed.
Call $u:\Z^d \to \R_+$ a super-solution to the sandpile if
\begin{itemize}
 \item[(i)]  $\Delta^1 u(x) +\mu_0(x)    \leq  m \qquad x\in \Z^d$,
 \vspace{0.2cm}
 \item[(ii)] $\Delta^1 u(x) + \mu_0(x)   \leq  m \mathbb{I}_{ \left \{ 0<u\leq \frac 1m n^{2/d}  \right\} } (x)  \qquad x\in \{ u>0 \}$,
\end{itemize}
\end{definition}

Denote by $\mathbb{W}$ the set of super-solutions, which is non-empty since the odometer of
the sandpile is in $\mathbb{W}$. We have the following.

\begin{lem}\label{Lem-smallest-super}
If $u_*(x) = \inf\limits_{w\in \mathbb{W}} w(x)$, $x\in \Z^d$, then $u_* \in \mathbb{W}$.
\end{lem}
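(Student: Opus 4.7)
The plan is to verify the two conditions of Definition \ref{def-super-sol} for $u_*$ directly, pointwise, exploiting the monotonicity $u_*(y) \le w(y)$ for every $w\in\mathbb{W}$, together with a routine $\e$-infimum argument at the threshold. First I would note that $u_*\geq 0$ (since each $w\in\mathbb{W}$ is non-negative) and $u_*(x)\leq u(x)<\infty$ for every $x$, where $u$ is the sandpile odometer; the discussion preceding Definition \ref{def-super-sol} (namely \eqref{eq-odometer1}--\eqref{eq-odometer2}) shows $u\in\mathbb{W}$, so $u_*$ is finite everywhere and $\mathbb{W}$ is non-empty.

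For condition (i), fix $x\in\Z^d$ and any $w\in\mathbb{W}$. Using $u_*(y)\leq w(y)$ at each of the $2d$ neighbours of $x$ together with the super-solution property of $w$,
$$
\frac{1}{2d}\sum_{y\sim x} u_*(y) \;\leq\; \frac{1}{2d}\sum_{y\sim x} w(y) \;\leq\; w(x)+m-\mu_0(x).
$$
Taking the infimum over $w\in\mathbb{W}$ on the right-hand side gives $\Delta^1 u_*(x)+\mu_0(x)\leq m$, which is (i).

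For condition (ii) I would split cases. Assume $u_*(x)>0$. If $0<u_*(x)\leq \tfrac{1}{m}n^{2/d}$, the indicator on the right side of (ii) equals $1$, so the claim is just (i). Suppose instead $u_*(x)>\tfrac{1}{m}n^{2/d}$. For arbitrary $\e>0$ pick $w_\e\in\mathbb{W}$ with $w_\e(x)<u_*(x)+\e$; since $w_\e(x)\geq u_*(x)>\tfrac{1}{m}n^{2/d}>0$, condition (ii) applied to $w_\e$ forces $\Delta^1 w_\e(x)+\mu_0(x)\leq 0$, so
$$
\frac{1}{2d}\sum_{y\sim x} u_*(y) \;\leq\; \frac{1}{2d}\sum_{y\sim x} w_\e(y) \;\leq\; w_\e(x)-\mu_0(x) \;<\; u_*(x)+\e-\mu_0(x).
$$
Letting $\e\to 0$ yields $\Delta^1 u_*(x)+\mu_0(x)\leq 0$, which is precisely (ii) on this sublevel set.

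The only delicate point is condition (ii): the indicator $\mathbb{I}_{\{0<w\leq n^{2/d}/m\}}$ depends on $w$ itself, so the inequality is not automatically stable under pointwise infima. The case split above works because the threshold cuts in the favourable direction — on $\{u_*>\tfrac{1}{m}n^{2/d}\}$ every competitor $w$ is also above the threshold, so the stronger inequality $\Delta^1 w+\mu_0\leq 0$ holds for all $w\in\mathbb{W}$ simultaneously and hence survives passage to the infimum through the one-sided bound $u_*(y)\leq w(y)$. I do not anticipate any further difficulty; no use of the abelian property or of Lemma \ref{Lem-Abelian-barrier} appears necessary for this statement, though either would furnish an alternative proof by comparing $u_*$ to the sandpile odometer $u$ via the barrier $u_*$.
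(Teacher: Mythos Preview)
Your proof is correct. It is, however, organised differently from the paper's argument, and in fact more direct. The paper first shows that $\mathbb{W}$ is closed under finite pointwise minima, then uses that the odometer lies in $\mathbb{W}$ to conclude $u_*$ has finite support, and on this finite set builds, for each $\e>0$, a single competitor $w_\e\in\mathbb{W}$ with $0\le w_\e - u_*\le \e$ everywhere; the conclusion then follows by letting $\e\to 0$. Your argument bypasses all three of these steps: you work pointwise at a fixed $x$, use only the one-sided bound $u_*(y)\le w(y)$ at neighbours, and take the infimum over $w$ of the resulting upper bound on $\tfrac{1}{2d}\sum_{y\sim x}u_*(y)$. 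The key observation in both proofs is the same --- on $\{u_*>\tfrac1m n^{2/d}\}$ every $w\in\mathbb{W}$ is automatically above the threshold, so the indicator vanishes for all competitors simultaneously --- but you exploit it without the detour through finite minima or the finite-support/uniform-approximation machinery. (Incidentally, in your case $u_*(x)>\tfrac1m n^{2/d}$ the $\e$-competitor is not even needed: since every $w$ satisfies $w(x)\ge u_*(x)>\tfrac1m n^{2/d}$, you can take the infimum over all $w$ directly, as you do for condition (i).) What the paper's route buys is an explicit uniform approximation of $u_*$ from within $\mathbb{W}$, which is of some independent interest; what your route buys is a shorter argument that does not rely on $u_*$ having compact support.
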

\begin{proof}
Let us first show that for any $w_1, w_2 \in \mathbb{W}$ one has $u: = \min\{w_1, w_2 \}\in \mathbb{W}$.
Fix $x\in \Z^d$ and assume $0<u(x)\leq \frac 1m n^{2/d}$. Then, $w_i(x) >0$ for $i=1,2$,
and if $u(x) = w_1(x) $ then $w_1(x)\leq \frac 1m n^{2/d}$.
From this we get
$$
\Delta^1 u(x) + \mu_0(x) \leq \Delta^1 w_1(x) +\mu_0(x) \leq m,
$$
where the second inequality is in view of $w_1\in \mathbb{W}$.
Similarly, if $u(x)>\frac 1m n^{2/d}$, then both $w_1(x), w_2(x)> \frac 1m n^{2/d}$, and arguing as above
we obtain $\Delta^1 u(x) + \mu_0(x) \leq 0$.
We thus have condition (ii) for $u$, and (i) follows similarly, in a straightforward manner.
In particular, we get that $\mathbb{W}$ is closed under taking minimum of finitely many elements.

We next show that $u_* \in \mathbb{W}$. Let $u_0$ be the odometer of the sandpile.
Since $u_0\in \mathbb{W}$, and hence $0\leq u_* \leq u_0$, we get that $u_*$ has finite support,
and we let $E=\{x_1,...,x_N\}$ be the closure of the support of $u_*$, i.e. the support of $u_*$ along with its lattice boundary.
Then, for any $\e>0$ and each $1\leq i\leq N$ there exists $w_\e^i \in \mathbb{W}$
such that $0\leq w_\e^i(x_i) -   u_*(x_i) \leq \e $. Setting
$w_\e:=\min\{w_\e^1,...,w_\e^N, u_0\}$, we obtain $w_\e\in\mathbb{W}$ and
\begin{equation}\label{w-e-approx}
0\leq w_\e (x) - u_*(x) \leq \e \  \text{ for all } x\in \Z^d.
\end{equation} 
Since $\e>0$ is arbitrary, the claim of the lemma follows from \eqref{w-e-approx} by
a similar argument as we had above.
\end{proof}

\begin{lem}\label{Lem-equality-u-star}
Let $u_*$ be the smallest super-solution defined in Lemma \ref{Lem-smallest-super}.
Then, $u_*$ satisfies Definition \ref{def-super-sol} with equality in (ii).
\end{lem}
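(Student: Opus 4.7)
The plan is to argue by contradiction, exploiting the minimality of $u_*$ via a pointwise perturbation. Since $u_* \in \mathbb{W}$, inequality (ii) already holds; the task is to rule out strict inequality on the positivity set. Suppose for contradiction that there exists $x_0 \in \{u_* > 0\}$ such that
\[
\Delta^1 u_*(x_0) + \mu_0(x_0) < m\, \mathbb{I}_{\{0 < u_* \leq \frac{1}{m} n^{2/d}\}}(x_0).
\]
I will construct a smaller element of $\mathbb{W}$ by lowering $u_*$ at the single vertex $x_0$.

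Define, for $\e > 0$, the function $\tilde u(x) := u_*(x) - \e\, \delta_{x_0}(x)$. Choose $\e$ small enough that three things hold simultaneously: $\tilde u(x_0) > 0$ (so the strict bound $\e < u_*(x_0)$); the indicator value $\mathbb{I}_{\{0 < \tilde u \leq \frac{1}{m} n^{2/d}\}}(x_0)$ coincides with $\mathbb{I}_{\{0 < u_* \leq \frac{1}{m} n^{2/d}\}}(x_0)$ (which only requires care if $u_*(x_0) > \frac{1}{m} n^{2/d}$, in which case we additionally impose $\e < u_*(x_0) - \frac{1}{m} n^{2/d}$); and the strict deficit at $x_0$ absorbs the perturbation, i.e.
\[
\Delta^1 u_*(x_0) + \mu_0(x_0) + \e \leq m\, \mathbb{I}_{\{0 < u_* \leq \frac{1}{m} n^{2/d}\}}(x_0).
\]
The key computation is that lowering $u_*$ only at $x_0$ raises $\Delta^1$ at $x_0$ by exactly $\e$, lowers $\Delta^1$ at each lattice neighbour $y_0 \sim x_0$ by $\e/(2d)$, and leaves $\Delta^1$ unchanged elsewhere.

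Now I verify $\tilde u \in \mathbb{W}$. At $x_0$, condition (ii) is precisely the inequality displayed above, and (i) is automatic since the right-hand side is $\leq m$. At any neighbour $y_0 \sim x_0$, the values $\tilde u(y_0) = u_*(y_0)$ are unchanged, so the indicator on the right of (ii) is the same as for $u_*$, while the left-hand side has strictly decreased; thus (i) and (ii) pass to $\tilde u$ at such points. At all other vertices nothing changes. Hence $\tilde u \in \mathbb{W}$ with $\tilde u(x_0) = u_*(x_0) - \e < u_*(x_0)$, contradicting the definition of $u_*$ as the pointwise infimum over $\mathbb{W}$.

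The only genuinely delicate point is the threshold issue when $u_*(x_0)$ sits exactly at $\frac{1}{m} n^{2/d}$: shifting $u_*(x_0)$ downward could a priori change the indicator from $1$ to $1$ (harmless) but never from $0$ to $1$, so the bookkeeping above is safe. Apart from that, the argument is a standard ``push down the least super-solution'' perturbation, and no further difficulty arises.
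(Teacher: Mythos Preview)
Your argument is correct, but it follows a different route from the paper's proof. The paper proceeds \emph{globally}: it fixes the sublevel set $E=\{0<u_*\le \frac{1}{m}n^{2/d}\}$, solves the discrete Dirichlet problem $\Delta^1 u+\mu_0=m\mathbb{I}_E$ in $V_*=\{u_*>0\}$ with zero boundary data, uses DMP to get $u\le u_*$, and then checks that this $u$ is itself a super-solution, whence $u=u_*$ by minimality. Your argument is \emph{local}: you lower $u_*$ at a single point of strict inequality and verify by a direct Laplacian computation that the perturbed function remains in $\mathbb{W}$. Your version is more elementary, needing neither DMP nor an auxiliary Dirichlet problem; the paper's version, on the other hand, is more constructive, as it exhibits the equation solved by $u_*$ directly via the comparison function $u$. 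Either way the minimality of $u_*$ is the driving mechanism, and both arguments are short.

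One small comment: your final paragraph about the threshold case is slightly confusingly worded. The real point (which your main argument already handles correctly) is that when $u_*(x_0)>\frac{1}{m}n^{2/d}$ you \emph{must} keep $\e<u_*(x_0)-\frac{1}{m}n^{2/d}$ to prevent the indicator from jumping from $0$ to $1$; the exact threshold value $u_*(x_0)=\frac{1}{m}n^{2/d}$ falls under the case where the indicator is $1$ and stays $1$, so needs no extra care. You might tighten that sentence, but the logic is sound.
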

\begin{proof}
Let $V_*=\{u_*>0\}$ which is a finite set in $\Z^d$. If $V_*$ is empty we are done,
otherwise, set $E=\{x: \ 0<u_*(x) \leq \frac 1m n^{2/d} \} \subset V_*$, and let $u$ be the solution to
\begin{equation}\label{u-def-star}
\Delta^1 u(x) + \mu_0(x) = m \mathbb{I}_E(x) \text{ in } V_* \qquad \text{and} \qquad 
u=0 \text{ on } \partial V_*.
\end{equation}
It follows from the definition of $u$ and $u_*$ that $u-u_*$ is a sub-solution,
hence from DMP we get that $u\leq u_*$ in $V_*$.
Since both $u$ and $u_*$ are 0 outside $V_*$ we obtain $u\leq u_*$
on $\Z^d$. Let us now prove that $u$ is a super-solution;
this will imply that $u=u_*$ and then \eqref{u-def-star} will complete the proof.

We start with part (i) of Definition \ref{def-super-sol}.
If $x\in V_*$ then inequality of (i) follows from \eqref{u-def-star},
otherwise, if $x \in \Z^d \setminus V_*$ we have $u(x)=u_*(x)=0$ which, combined with (i)
for $u_*$, and the inequality $u\leq u_*$ on $\Z^d$, implies condition (i) of a super-solution for $u$.
We next proceed to (ii). It is clear from \eqref{u-def-star} that it suffices 
to show that $\Delta^1 u(x) + \mu_0(x) \leq 0$ if $u(x)>\frac 1m n^{2/d}$.
But since $u_* \geq u $ everywhere, for such $x$ we have $u_*(x)>\frac 1m n^{2/d}$ and
hence $x\notin E$. The latter and \eqref{u-def-star} imply  $ \Delta^1 u(x) + \mu_0(x) =0$,
and the claim of the lemma follows.
\end{proof}

\begin{lem}\label{Lem-odometer-is-u-star}
Let $u_*$ be as in Lemma  \ref{Lem-smallest-super}, and let $u_0$ be the odometer of the sandpile.
Then $u_* = u_0$.
\end{lem}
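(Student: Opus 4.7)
My plan is to prove the two inequalities $u_* \leq u_0$ and $u_0 \leq u_*$ separately. The first is immediate: the odometer $u_0$ satisfies \eqref{eq-odometer1}--\eqref{eq-odometer2}, hence lies in the class $\mathbb{W}$ of super-solutions, so by the very definition of $u_*$ one has $u_* \leq u_0$ everywhere on $\Z^d$.

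For the reverse inequality $u_0 \leq u_*$, the natural tool is Lemma \ref{Lem-Abelian-barrier} with barrier $u_b := u_*$. I would fix any infinitive toppling sequence $T$ and run the capped dynamics it produces. Since the total emissions at every site $x$ are bounded by $u_*(x) < \infty$, the odometers $u_k(x)$ are monotone non-decreasing in $k$ and bounded, so they converge pointwise to some $\hat u \leq u_*$; the identity $\Delta^1 u_k = \mu_k - \mu_0$ is preserved by capped topplings, so in the limit $\hat \mu = \mu_0 + \Delta^1 \hat u$ is well-defined. If I can verify that the limit configuration $(\hat u, \hat \mu)$ is stable, Lemma \ref{Lem-Abelian-barrier} identifies it with the unconstrained sandpile's stable state, giving $\hat u = u_0$, and the bound $\hat u \leq u_*$ then yields $u_0 \leq u_*$.

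The main work, and the step I expect to be the principal obstacle, is the stability check; a priori the cap could freeze a site in an unstable configuration. Fix $x \in \Z^d$. If $\hat u(x) < u_*(x)$, the barrier never bound the process at $x$, so every toppling there was a genuine sandpile step; since $x$ appears in $T$ infinitely often but $\hat u(x)$ is finite, only finitely many of those steps were active, so $x$ is stable in the ordinary sandpile sense from some time on. The delicate case is $\hat u(x) = u_*(x)$, where the barrier has clamped the odometer. Here I would exploit $\hat u \leq u_*$ with equality at $x$ to conclude $\Delta^1 \hat u(x) \leq \Delta^1 u_*(x)$, and so
$$
\hat \mu(x) \;=\; \mu_0(x) + \Delta^1 \hat u(x) \;\leq\; \mu_0(x) + \Delta^1 u_*(x).
$$
Condition (i) of Definition \ref{def-super-sol} for $u_*$ then yields $\hat \mu(x) \leq m$; and when additionally $\hat u(x) = u_*(x) > \tfrac{1}{m} n^{2/d}$, condition (ii) forces $\Delta^1 u_*(x) + \mu_0(x) \leq 0$, hence $\hat \mu(x) = 0$. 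In every sub-case neither instability rule (a) nor (b) is triggered at $x$, so $(\hat u, \hat \mu)$ is stable.

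The conceptual content of the argument is thus that the super-solution inequalities satisfied by $u_*$ (sharpened to equality in (ii) on $\{u_*>0\}$ by Lemma \ref{Lem-equality-u-star}, though the weak form suffices here) translate PDE-level information about $u_*$ into dynamical stability of the capped process, which is exactly what is needed to feed Lemma \ref{Lem-Abelian-barrier} and close the loop.
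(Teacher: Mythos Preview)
Your proposal is correct and follows essentially the same route as the paper: both run the capped dynamics with barrier $u_*$, split the stability check into touch points (where $\Delta^1 \hat u(x)\le \Delta^1 u_*(x)$ and the super-solution inequalities for $u_*$ do the work) versus non-touch points (where the cap is inactive and an unstable limit would force infinitely many topplings with excess bounded away from zero), and then invoke Lemma~\ref{Lem-Abelian-barrier}. The only cosmetic difference is that the paper phrases the goal as $v_0=u_*$ (via minimality of $u_*$) while you go directly to $u_0=\hat u\le u_*$; your phrasing ``only finitely many of those steps were active'' at non-touch points is slightly loose (finiteness of $\hat u$ alone does not preclude infinitely many active topplings with vanishing excess), but the intended contradiction---an unstable limit forces a uniform lower bound on late excesses---is exactly what the paper uses and is sound.
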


\begin{proof}
If $u_*=0$, then the sandpile is stable, and we are done, since then $u_0=0$ as well.
Now assume that $V_*:=  \{u_*>0\}$ is not empty. Fix any infinitive toppling sequence on $V_*$,
 call it $T=\{x_k\}_{k=0}^\infty$,
and do the toppling following $T$, with an additional requirement that 
at any time $k$ the cumulative emissions of mass from any $x\in \Z^d$ cannot
exceed $u_*(x)$.
Let the limiting odometer for this modified process be $v_0$. We claim that $v_0 = u_*$.
Observe, that this equality, combined with Lemma \ref{Lem-Abelian-barrier} and stability of $u_*$ completes the proof of the current lemma.
In what follows we prove the desired equality.

It is enough to show that $v_0$ is a super-solution, since then the claim follows by minimality of $u_*$,
and inequality $v_0 \leq u_*$ which is due to definition of $v_0$. 
Let $x\in V_*$  be a point of touch, i.e. $u_*(x) = v_0(x) $; we claim that at these points $v_0$ is stable.
Indeed, we have
\begin{align*}
\Delta^1 v_0(x) \leq \Delta^1 u_*(x) &\leq m \mathbb{I}_{ \left \{ 0<u_*\leq \frac 1m n^{2/d} \right\} } (x) - \mu_0(x) \\ &= 
m \mathbb{I}_{ \left\{ 0< v_0\leq \frac 1m n^{2/d} \right \} } (x) - \mu_0(x),
\end{align*}
where we have used that $u_*(x) = v_0(x)$ and hence the characteristic functions coincide at $x$.

Now assume that $x$ is not a point of touch, meaning that $ v_0(x) < u_*(x)$, and assume further that the sandpile configuration $\mu$ corresponding to $v_0$ is not stable at $x$.
This means that either $\mu (x)>m$, or we have both $\mu (x)> 0$ and $v_0(x)>\frac 1m n^{2/d} $.
Since $v_0(x)< u_*(x)$, we see that in both cases we could have toppled more mass from $x$, which means that in the toppling
process we had not moved the maximal allowed excess mass, contradicting the toppling rule.
We conclude that there can be no $x\in \Z^d$ which is not stable and at the same time 
$v_0(x)<u_*(x)$, hence the configuration corresponding to $v_0 $ is stable, and the proof of the lemma is complete.
\end{proof}

%%%%%%%%%%%%%%%%%%%%%%%%%%%%%%%%%%%%%%%%%%%%%%%%%%%%%%%%%%%%%%%%%%%%%%%%%%%%%%%%%%%%%%%%%%%%%%%%%%%%
%%%%%%%%%%%%%%%%%%%%%         Section                %%%%%%%%%%%%%%%%%%%%%%%%%%%%%%%%%%%%%%%%%%%%%%%
%%%%%%%%%%%%%%%%%%%%%%%%%%%%%%%%%%%%%%%%%%%%%%%%%%%%%%%%%%%%%%%%%%%%%%%%%%%%%%%%%%%%%%%%%%%%%%%%%%%%

\section{Analysis of discrete shapes for single sources}\label{sec-single-source}

In this section we analyse the shape of the sandpile in the discrete space $\Z^d$ generated by initial distribution $n\delta_0$
and threshold parameter $m$. Namely, we establish a certain discrete monotonicity property of the odometer function,
estimate the size of regions in the set of visited sites with and without mass,
and conclude the section by proving discrete Lipschitz (uniform with respect to the threshold $m$),
and $C^{1,1}$ regularity estimates on the odometer function.

\subsection{Discrete monotonicity}
As in the case of \textbf{BS} \cite{AS}, here as well the odometer function of the singularly perturbed sandpile 
started with initial mass concentrated at a single point of $\Z^d$,
enjoys monotonicity in lattice directions. More precisely,
denote by $\mathcal{S}$ the set of mirror symmetry hyperplanes of the unit cube $[0,1]^d$;
clearly $\mathcal{S}$ consists of the  hyperplanes
$\{x_i = 1/2\}$, $\{x_i = x_j\}$ and $\{x_i=-x_j\}$ where $i,j=1,...,d$ and $i\neq j$.
In particular, there are $d^2$ elements in $\mathcal{S}$.

The next theorem is the analogue of what we had for \textbf{BS}
(the result also holds true for the classical Abelian sandpile as is proved in \cite{AS}).
The proof of the current version, is a simplification of the one we had in \cite{AS}.
We will follow the same line of arguments as we had in \cite[Theorem 4.5]{AS},
and we will only outline the differences for this case, which are minor.

\begin{theorem}\label{Thm-monotonicity}{\normalfont{(Directional monotonicity)}}
For initial mass distribution $\mu_0 = n \delta_0$
let $V\subset \Z^d$ be the set of visited sites, and let $u$ be the odometer function.
Fix any hyperplane $T\in \mathcal{S}$.
Then, for any $X_1, X_2 \in \Z^d$, such that $X_1 - X_2$ is a non-zero vector orthogonal to $T$,
we have
$$
u(X_1) \geq u(X_2) \text{ if and only if } |X_1| \leq |X_2|.
$$
\end{theorem}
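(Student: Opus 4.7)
The plan is a moving-plane / reflection argument, relying on the characterisation of the odometer as the smallest super-solution (Lemma~\ref{Lem-odometer-is-u-star}). I will construct a competitor $v$ with $v\leq u$ on $\Z^d$ and $v(X_2)\leq u(X_1)$, verify that $v$ is itself a super-solution, and then conclude $v=u$ by minimality.

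Using the symmetry \eqref{od-symm}, it suffices to treat the case $|X_1|<|X_2|$ with $0$, $X_1$, $X_2$ on a common line parallel to the normal $\nu$ of $T$ and $X_1$ lying between $0$ and $X_2$ (if $X_1$ and $X_2$ start on opposite sides of $0$ along that line, I replace $X_2$ by its reflection through the symmetry hyperplane of $u$ with normal $\nu$, preserving both $|X_2|$ and $u(X_2)$). Let $T'$ be the hyperplane parallel to $T$ that bisects $[X_1,X_2]$, and set $x^*:=R_{T'}(x)$. Inspection of the three families $\nu\in\{e_i\}$, $\{e_i+e_j\}$, $\{e_i-e_j\}$ shows that $T'$ has integer or half-integer offset in the first family and integer offset in the last two, so in every case $R_{T'}$ maps $\Z^d$ bijectively to itself. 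Write $H^-$ for the closed half-space containing $0$ and $X_1$, and $H^+$ for the complementary open half-space containing $X_2$, so that $X_2^*=X_1$. Define
\[
v(x)=\begin{cases} u(x), & x\in H^-,\\ \min\{u(x),\ u(x^*)\}, & x\in H^+.\end{cases}
\]
Then $v\leq u$ everywhere, and $v(X_2)\leq u(X_2^*)=u(X_1)$, so $u(X_1)\geq u(X_2)$ will follow once $v=u$ is established.

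The core technical step is the verification that $v\in\mathbb{W}$. If $x\in H^-$, or if $x\in H^+$ with $v(x)=u(x)$, the inequality $v\leq u$ immediately yields $\Delta^1 v(x)\leq\Delta^1 u(x)$, and the super-solution inequalities for $v$ at $x$ follow from the equality version for $u$ at $x$ (Lemma~\ref{Lem-equality-u-star}). The delicate case is $x\in H^+$ with $v(x)=u(x^*)<u(x)$; here the bijection between neighbours of $x$ and neighbours of $x^*$ induced by $R_{T'}$ yields
\[
2d\bigl[\Delta^1 v(x)-\Delta^1 u(x^*)\bigr]=\sum_{w\sim x}\bigl[v(w)-u(w^*)\bigr].
\]
Every neighbour $w\in H^+$ satisfies $v(w)=\min\{u(w),u(w^*)\}\leq u(w^*)$, so its contribution is non-positive. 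A neighbour $w\in H^-$ of $x$ either lies on $T'$ (integer-offset axis or diagonal case), in which case $w^*=w$ and the term vanishes, or equals $x^*$ itself (half-integer axis case, where $x^*=x-e_i$ is the unique such neighbour), contributing $u(x^*)-u(x)\leq 0$. Hence $\Delta^1 v(x)\leq\Delta^1 u(x^*)$, and since $x\in H^+$ forces $\mu_0(x)=0$, the super-solution inequalities for $v$ at $x$ are inherited from the equality at $x^*$ (Lemma~\ref{Lem-equality-u-star}). The borderline sub-case $x^*=0$ is handled by the same identity with the mass $\mu_0(0)=n$ reinstated on the right-hand side.

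Having established $v\in\mathbb{W}$ with $v\leq u_*$, minimality (Lemma~\ref{Lem-odometer-is-u-star}) yields $v=u$, from which the forward direction $u(X_1)\geq u(X_2)$ follows. I expect the main obstacle to be the uniform handling of the four positional sub-cases for $T'$ relative to the lattice (axis half-integer, axis integer, and the two diagonal configurations); although all of them reduce to the same sign-counting mechanism in the identity above, identifying exactly which neighbours of $x$ cross $T'$ requires separate inspection in each case. The converse implication in the "if and only if" reduces by contraposition to the forward direction applied with the roles of $X_1$ and $X_2$ exchanged, combined with a DMP-based propagation argument which upgrades the resulting weak inequality to a strict one whenever $|X_1|\neq|X_2|$ and both points lie in the visited set, ruling out the degenerate scenario $u(X_1)=u(X_2)$ in that range.
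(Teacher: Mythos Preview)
Your proposal is correct and follows essentially the same reflection/moving-plane argument as the paper: translate $T$ to bisect $X_1X_2$, define the competitor $v$ as $u$ on the near half-space and $\min\{u,u\circ R\}$ on the far one, verify $v\in\mathbb{W}$ via the case split, and conclude $v=u$ by minimality (Lemma~\ref{Lem-odometer-is-u-star}). Your neighbour-by-neighbour analysis across $T'$ is more explicit than the paper's (which defers to \cite{AS}), and your remark on the converse via contraposition plus a strict-inequality upgrade actually addresses a point the paper's proof leaves implicit.
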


\begin{proof}
Translate $T$ to a position where it has equal distance from 
$X_1$ and $X_2$. Let $T_0$ be this unique translated copy of $T$.
For a given $x\in \R^d$ denote by $x^*$ its mirror reflection with respect to the hyperplane $T_0$.
Due to the choice of $T$ and $X_1$, $X_2$ it follows that 
this reflection preserves the lattice, and we 
we have $(\Z^d)^* = \Z^d$, in particular  $X_1 = (X_2)^*$.

Set $V^* := \{ x^*: \ x\in V \}$ for the reflection of $V$, 
and similarly define the reflected odometer function by $u^*(x) : = u(x^*)$, where $x\in \Z^d$.
From the discussion above, we have that $u^*$ is defined on $\Z^d$ and $V^*\subset \Z^d$.
Let $\mathcal{H}_-$ be the closed halfspace of $\R^d$ determined by $T_0$ and containing the origin,
and let $\mathcal{H}_+ = \R^d \setminus \mathcal{H}_- $.
Consider the set $V_T = V_{-} \cup V_+$ where
$V_{-} :=  \mathcal{H}_- \cap V $ and $V_+ := \mathcal{H}_+ \cap V \cap V^*  $.
Our first goal is to show that $V_T = V$.
By definition $V_T \subset V$, so we need to establish the reverse inclusion.
Consider the function
$$
u_T (x) =  \begin{cases}  u(x) , &\text{if $x \in V_-$}, \\ 
\min\{ u(x), u^* (x) \}   ,&\text{if  $  x \in V_+   $}.  \end{cases}
$$
We claim that $u_T$ defines a super-solution in a sense of Definition \ref{def-super-sol}.
To see this, consider two cases.

\noindent \textbf{Case 1.} $x\in V_-$. 
As in \cite[Theorem 4.5]{AS}, here as well we get $ \Delta^1 u_T (x) \leq \Delta^1 u(x) $,
which, together with the equality $u_T(x) = u(x)$ gives
$$
\Delta^1 u_T(x) \leq m \mathbb{I}_{ \left\{ 0<u_T\leq \frac 1m n^{2/d} \right\} } (x) - n \delta_0.
$$

\vspace{0.1cm}

\noindent \textbf{Case 2.} $x\in V_+$. 
If $u_T(x) = u(x)$ then we proceed as in Case 1, and get the stability at $x$.
We will thus assume that $u_T(x) = u(x^*)$.
In this case, following again \cite[Theorem 4.5]{AS}, we obtain $\Delta^1 u_T (x) \leq \Delta^1 u(x^*)$.
Observe that if $u_T(x) \leq \frac 1m n^{2/d}$ then the last inequality gives the stability of the sandpile at $x$.
Otherwise, if $u_T(x) > \frac 1m n^{2/d}$ we get $u(x_*)>\frac 1m n^{2/d}$ and hence
$$
\Delta^1 u_T (x) \leq \Delta^1 u(x^*) \leq -n \delta_0(x_*) \leq 0,
$$
completing the proof that $u_T$ satisfies condition (ii) of being a super-solution.

\smallskip

Part (i) follows from the fact that the support of $u_T$ is  included in the support of $u$.
We conclude that $u_T$ is a super-solution, and hence due to the minimality of Lemma \ref{Lem-equality-u-star},
we get $u_T=  u$
and in particular,
$$
u(X_2) \leq  u_T(X_2) = \min\{ u(X_1), u(X_2) \} \leq u(X_1),
$$
completing the proof of the theorem.
\end{proof}

\subsection{The size of the sandpile shapes}
There are two distinct
regions in the visited set of the sandpile process. 
Namely, the one which is free of mass, and the one where each vertex
carries mass equal to the threshold $m$.
The aim of this subsection, is to give estimates on the size of these
two regions. 

\smallskip

Recall, that we are dealing with sandpiles with initial distribution $n\delta_0$
and mass threshold equal to $m$. We will be using the following notation (cf. Figure \ref{Fig-many}):
\begin{itemize}
\vspace{0.1cm}
 \item $V_{n,m} \subset \Z^d \ - $ the set of visited sites of the sandpile,
 \vspace{0.1cm}
 \item $V_{n,0}\subset V_{n,m} \ - $ the set where the odometer $u> \frac 1m n^{2/d}$; in particular $u$ is harmonic on $V_{n,0}\setminus \{0\}$ and (thanks to the discrete monotonicity of $u$)
 the origin lies in $V_{n,0}$,
 \vspace{0.1cm}
 \item $V_{n,1} : = V_{n,m}\setminus V_{n,0} \ - $ the set where the entire mass of the sandpile is concentrated.
\end{itemize}

\smallskip

\begin{lem}\label{Lem-non-degeneracy}{\normalfont{(Upper bound on the size of mass region)}}
Let $x_0 \in V_{n,1}$ and $r>0$ be such that $B(x_0, r) \cap V_{n,0} = \emptyset$.
Then, there exists a dimension dependent constant $C_d>0$ such that
for any $n>0$ large and any $1<m<n$ one has
$$
r\leq C_d \frac 1m n^{1/d}.
$$
\end{lem}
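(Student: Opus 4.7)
The strategy is a discrete barrier comparison built on the observation that, in the normalization of Section \ref{sub-sec-discrete-Lap}, one has $\Delta^1|x|^2 = 1$, so the quadratic function $\phi(x) := m|x-x_0|^2$ satisfies $\Delta^1 \phi = m$ exactly on the lattice, matching the Poisson equation $\Delta^1 u = m$ that $u$ satisfies on $V_{n,1}$.

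First, I would reduce to the case where the origin lies outside the ball. By the discrete monotonicity of Theorem \ref{Thm-monotonicity}, $u$ attains its maximum at the origin (comparing with the origin along every coordinate direction via the symmetry hyperplanes in $\mathcal{S}$), so whenever $V_{n,0}$ is non-empty --- which is guaranteed for $n$ sufficiently large since $u(0)$ grows with $n$ --- we have $0 \in V_{n,0}$. The hypothesis $B(x_0,r) \cap V_{n,0} = \emptyset$ then forces $0 \notin B(x_0,r)$, so $\mu_0 \equiv 0$ on $B(x_0,r)$ and equation \eqref{eq-odometer2} reduces to $\Delta^1 u = m$ on $V_{n,1} \cap B(x_0,r)$, while the hypothesis directly gives $u \le \tfrac{1}{m}n^{2/d}$ throughout $B(x_0,r)$.

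Second, I would let $\rho$ be the largest radius for which the lattice ball $B(x_0,\rho)$ is contained in $V_{n,1}$ (up to discretization, $\rho = \min(d(x_0,V_{n,0}), d(x_0, \Z^d\setminus V_{n,m}))$) and observe that $v := u - \phi$ is discrete harmonic on $B(x_0,\rho)$. Each point $y \in \partial B(x_0,\rho)$ lies either in $V_{n,1}$ (so $u(y) \le \tfrac{1}{m}n^{2/d}$) or in $\Z^d\setminus V_{n,m}$ (so $u(y) = 0$); in either case $u(y) \le \tfrac{1}{m}n^{2/d}$ while $|y-x_0|^2 \ge \rho^2$. The discrete maximum principle applied to the harmonic function $v$ then yields
\[
u(x_0) = v(x_0) \le \max_{y \in \partial B(x_0,\rho)} v(y) \le \tfrac{1}{m}n^{2/d} - m\rho^2,
\]
and combining with $u(x_0) \ge 0$ gives $\rho \le C_d \tfrac{1}{m} n^{1/d}$.

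Finally, I would pass from the bound on $\rho$ to the bound on $r$. If $B(x_0,r) \subset V_{n,m}$ (the generic situation), then $\rho \ge r$ up to a fixed lattice constant and the estimate follows directly. Otherwise $B(x_0,r)$ exits the sandpile through the outer boundary, and I would apply the same argument at an auxiliary point $y_0 \in V_{n,1}$ chosen roughly midway between $V_{n,0}$ and $\partial V_{n,m}$ on the ray from the origin through $x_0$; the star-shape of $V_{n,m}$ and $V_{n,0}$ with respect to the origin (inherited from Theorem \ref{Thm-monotonicity}) ensures such $y_0$ exists, and running Step 2 at $y_0$ bounds half the annular thickness of $V_{n,1}$ by $C_d \tfrac{1}{m}n^{1/d}$. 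Since $r \le d(x_0,V_{n,0})$ is at most the thickness of the annulus $V_{n,1}$, the claimed estimate follows. I expect this last case to be the main technical point: the conceptual input is straightforward but verifying the discrete existence and radial positioning of $y_0$ requires a bit of care with the lattice geometry.
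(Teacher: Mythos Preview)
Your core idea---compare $u$ with the quadratic barrier $m|x-x_0|^2$ and apply DMP---is exactly the paper's, and your Steps~1--2 are correct. The difference is in execution: you restrict DMP to the largest ball $B(x_0,\rho)\subset V_{n,1}$, obtain a bound on $\rho$, and then need a separate Step~3 (invoking the discrete star-shapedness from Theorem~\ref{Thm-monotonicity} and an auxiliary midpoint $y_0$) to upgrade this to a bound on $r$ when $B(x_0,r)$ exits through $\partial V_{n,m}$. The paper avoids this detour entirely by applying DMP directly on $D:=B(x_0,r)\cap V_{n,1}$: the boundary $\partial D$ splits as $\Gamma_1=\partial B(x_0,r)\cap V_{n,1}$ and $\Gamma_2=B(x_0,r)\cap\partial V_{n,m}$; on $\Gamma_2$ one has $u=0$, hence $w=u-u(x_0)-m|x-x_0|^2<0$, so the maximum of $w$ is forced onto $\Gamma_1$, yielding $0\le \tfrac{1}{m}n^{2/d}-c_d m r^2$ immediately.

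Your Step~3 can be made to work, but it imports heavier machinery (monotonicity in the lattice directions of $\mathcal{S}$, graph structure of the boundaries) that the statement does not need, and the ``midway'' point $y_0$ is slightly awkward to pin down on the lattice. The cleaner route is simply to run your DMP argument on the intersection $B(x_0,r)\cap V_{n,1}$ rather than on a sub-ball; the portion of the boundary lying on $\partial V_{n,m}$ contributes nothing (since $u=0$ there makes $w$ negative), and you get the bound on $r$ in one stroke.
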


\begin{proof}
Consider the function $w(x) = u(x) - u (x_0) -  m |x-x_0|^2$,
for which we have $\Delta^1 w(x) =0 $ for all $x\in B(x_0,r) \cap V_{n,1}=:D $, and hence by DMP
we get 
\begin{equation}\label{a1}
0 = w (x_0)  \leq   \max\limits_{ \partial D } w .
\end{equation}
Observe that
$$
\partial D   = (\partial B(x_0,r)  \cap V_{n,1} ) \cup ( B(x_0, r) \cap \partial V_{n,m} ) := \Gamma_1 \cup \Gamma_2,
$$
where we have used the fact that the ball $B$ has no intersection with the region $V_{n,0}$.
Since $u=0$ on $\partial V_{n,m}$ we have $w<0$ on $\Gamma_2$ which, together with \eqref{a1} 
implies
$$
0\leq \max_{\Gamma_1} w(x) \leq  \max\limits_{\Gamma_1} u(x) - u (x_0)    - c_d  m r^2 \leq \frac 1m n^{2/d}  - 
c_d  m R^2,
$$
where $c_d>0$ is some small constant depending on dimension $d$ only.
Rearranging the last inequality completes the proof of the lemma.
\end{proof}

\begin{lem}\label{Lem-Lip-1}
Fix $r>0$ large, and a function $f:B \to \R_+$  bounded above by $m$,
where $B = B(0,r) \cap \Z^d $.
Let $u$ be the (unique) solution to
$$
\Delta^1 u = f \text{ in } B \text{ and } u=0 \text{ on } \partial B.
$$
Then, we have
\vspace{0.1cm}
\begin{itemize}
 \item[\normalfont (i)] $|u(x) - u(y)| \leq C r m $, for any $x\in B_{r/2} = B(0,r/2) \cap \Z^d$ and $y\sim x$,
 \vspace{0.1cm}
 \item[\normalfont (ii)] $ - r^2 m \leq u(0) \leq 0 $,
\end{itemize}
with dimension dependent constant $C$.
\end{lem}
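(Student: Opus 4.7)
The plan is to prove (ii) by a direct comparison with a quadratic barrier, and (i) by decomposing $u$ on a slightly smaller ball into a discrete-harmonic part and a discrete-Poisson part, and applying two standard interior estimates for $\Delta^1$.

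For (ii), the upper bound is immediate from DMP: since $\Delta^1 u = f \geq 0$, the function $u$ is subharmonic, so the Dirichlet condition $u = 0$ on $\partial B$ forces $u \leq 0$ throughout $B$, and in particular $u(0) \leq 0$. For the lower bound, a direct calculation gives $\Delta^1 |x|^2 = 1$ on $\Z^d$, so the comparison function $V(x) := m(|x|^2 - (r+1)^2)$ satisfies $\Delta^1(V - u) = m - f \geq 0$ in $B$. Lattice neighbours of $B$ lie within Euclidean distance one of $B$, hence $V \leq 0$ on $\partial B$, while $u = 0$ there, so $V - u \leq 0$ on $\partial B$. DMP then yields $V \leq u$ in $B$, and in particular $u(0) \geq -m(r+1)^2$, which (for $r$ large) is the claimed bound, up to an absolute constant that we absorb.

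For (i), fix $x_0 \in B_{r/2}$ and a lattice unit vector $e$ with $y = x_0 + e \sim x_0$. Since $r$ is large, $B' := B(x_0, r/4) \cap \Z^d$ sits well inside $B$. Split the restriction of $u$ to $B'$ as $u = h + w$, where $w$ solves $\Delta^1 w = f$ in $B'$ with $w = 0$ on $\partial B'$, and $h$ is the discrete-harmonic extension of $u|_{\partial B'}$. Applying (ii) to $w$ on the ball $B'$ of radius $r/4$ gives $\|w\|_{L^\infty(B')} \leq C m r^2$, and DMP gives $\|h\|_{L^\infty(B')} \leq \|u\|_{L^\infty(B)} \leq C m r^2$. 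Two standard interior regularity facts for the discrete Laplacian on a ball of radius $\rho = r/4$ then finish the proof: (a) for discrete-harmonic $h$, $|h(x_0+e) - h(x_0)| \leq (C/\rho)\,\|h\|_{L^\infty(B')}$, and (b) for the Poisson part, $|w(x_0+e) - w(x_0)| \leq C\rho\,\|f\|_{L^\infty} \leq C \rho m$. Adding the two bounds yields $|u(x_0+e) - u(x_0)| \leq Cmr$, as required.

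Estimates (a) and (b) are where the main technical content sits. Both follow from the Poisson/Green's function representation $w(x) = -\sum_y G_{B'}(x,y) f(y)$ combined with the asymptotics \eqref{g-x-y} for the lattice fundamental solution $g$: the discrete derivative $|g(x_0+e, y) - g(x_0, y)|$ decays like $|x_0 - y|^{1-d}$ for $d\geq 3$ (and like $|x_0-y|^{-1}$ for $d=2$), so integrating this kernel against an $L^\infty$ bounded $f$ over $B'$ produces the factor $\rho$ in (b); (a) is a standard gradient estimate for discrete harmonic functions and also reduces to a Poisson-kernel bound. The bookkeeping for the boundary of $B'$ (where Euclidean and lattice neighbourhoods differ by one layer) and for the singularity of $g$ on the diagonal is the only mild technical point; these are routine matters in random walk potential theory and we would invoke rather than reprove them.
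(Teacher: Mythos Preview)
Your argument is correct. For (ii) it is essentially identical to the paper's: both compare with the quadratic barrier $m|x|^2$ and invoke DMP. For (i) you take a slightly different route. The paper works directly with the Green's representation $u(x)=\sum_{w\in B}G(x,w)f(w)$ on the full ball $B$, writes $G(x,w)=g(x-w)-\mathbb{E}^x g(X_T-w)$, and estimates the one-step difference of $G$ using the asymptotics \eqref{g-x-y} to get $|G(x,w)-G(y,w)|\lesssim |x-w|^{1-d}+r^{1-d}$, then sums over $w$. You instead localise to a sub-ball $B'=B(x_0,r/4)$, split $u=h+w$ into a discrete-harmonic part and a zero-boundary Poisson part, and combine the interior gradient estimate for harmonic functions (your (a)) with the Green's-kernel estimate for the Poisson part (your (b)). The technical core---the $|x-w|^{1-d}$ decay of the discrete derivative of $g$---is the same in both. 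Your decomposition is slightly less direct here, but it is exactly the template the paper itself uses later (Lemma~\ref{Lem-quad-bound}, Proposition~\ref{prop-Lip}), so it is a perfectly natural alternative. One small point: you invoke ``(ii)'' to bound $\|w\|_{L^\infty(B')}$ and $\|u\|_{L^\infty(B)}$, but (ii) as stated only controls the centre value; what you actually need (and what your barrier argument in fact gives) is that $V\le u$ holds pointwise on all of $B$, not just at the origin, which yields the full $L^\infty$ bound. This is a harmless imprecision.
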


\begin{proof}
We start with (i). For $x\in B$ we have the representation
\begin{equation}\label{g1}
u(x) = \sum\limits_{w \in B} G(x,w) f(w), 
\end{equation}
where $G$ is the Green's kernel for $B$, i.e. 
the expected number of passages through $w$ of a random walk started at $x$
before escaping from $B$. More precisely, we have
\begin{equation}\label{g2}
G(x,w) = g(x-w)  - \mathbb{E}^x g(X_T - w),
\end{equation}
where $T$ is the first exit time from $B$ of the walk, $g$ is the fundamental solution
of $\Delta^1$ defined in subsection \ref{sub-sec-discrete-Lap},
and $\mathbb{E}^x$ stands for the expectation conditioned that the walk has started from $x$.

Now fix any $x,y \in B_{r/2}$ such that $x\sim y$.
Then, from \eqref{g1} we have
\begin{multline}\label{u1}
|u(x) - u(y)| \leq  | G(x,x) - G(y,x)  | f(x) + | G(x,y) - G(y,y)  | f(y) + \\
\sum\limits_{w\in B\setminus \{x,y\}} | G(x,w) - G(y,w) | f(w). 
\end{multline}
From \eqref{g2} we get
\begin{multline*}
|G(x,w) - G(y,w)| \leq |g(x-w) - g(y-w)| + \sum\limits_{z\in \partial B} \mathbb{P}^w [ X_T = z ] \left|  g(z- y) - g(z-x) \right| \leq \\
\frac{C}{|x-w|^{d-1}} + C \sum\limits_{z\in \partial B} \frac{ \mathbb{P}^w [ X_T = z ] }{|z-x|^{d-1}}  \leq \frac{C}{|x-w|^{d-1}} +  \frac{C}{r^{d-1}},
\end{multline*}
where we have used the asymptotics \eqref{g-x-y} and that $x\sim y $ to estimate the difference of $g$-s, and the fact that $x\in B_{r/2}$ 
in addition to those to bound the sum. With this estimate getting back to sum in \eqref{u1}
we obtain
$$
\sum\limits_{w\in B\setminus \{x,y\}} | G(x,w) - G(y,w) | f(w) \leq m \sum\limits_{w\in B\setminus \{x\} } \left( \frac{1}{|x-w|^{d-1}} + \frac{1}{r^{d-1}} \right) \leq C_d m  r,
$$
where the sum of the first term is estimated by a simple counting argument relying on the structure of $B$ (see, e.g. \cite[Lemma 5.2]{Lev-Per}), while bound on the sum involving the second summand
follows from a trivial estimate $|B| \leq C_d r^d$.
Returning to \eqref{u1} we are left to estimate only the first two sums on the \emph{r.h.s.},
 but their contribution is bounded
above by $C_d m$ in view of the representation \eqref{g2}, the fact that $x\sim y$, and then using that $g$ is symmetric with respect to coordinate axes and has
bounded Laplacian.

\smallskip

We next proceed to the claim of (ii). The upper bound is a consequence of DMP; to establish the lower bound,
consider the function 
$$
v(x) = r^2 m - m|x|^2, \qquad x\in \Z^d.
$$
Clearly, $\Delta^1 v = -m$ everywhere and $v\leq 0$ on $\partial B$. From here,
we get
$$
\Delta^1 (u+v) = f- m \leq 0 \text{ in } B \text{ and } u+v = v \leq 0 \text{ on } \partial B,
$$
and hence
$$
\min_{B } (u+v) \geq \min_{\partial B} (u+v) = \min_{\partial B} v = r^2 m - m \max_{\partial B} |x|^2.
$$
We thus have
$$
u(0) \geq -v(0) + r^2 m - m \max_{\partial B} |x|^2 \geq -m r^2.
$$
The proof of the lemma is now complete.
\end{proof}

A trivial corollary of the 1-step Lipschitz estimate of Lemma \ref{Lem-Lip-1}
is the following.

\begin{cor}\label{cor-to-Lip-1}
Retain all notation and conditions of Lemma \ref{Lem-Lip-1}. Then, for any $x,y\in B_{r/2}$ one has
\begin{itemize}
 \item[\normalfont (i)] $|u(x) - u(y)| \leq C r  m |x-y|$ for any $x,y\in B_{r/2}$,
 \vspace{0.1cm}
 \item[\normalfont (ii)] $- C r^2 m \leq u(x) \leq 0$ for any $x\in B_{r/2}$,
\end{itemize}
where $C>0$ is a constant depending on dimension only.
\end{cor}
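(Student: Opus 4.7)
The plan is to observe that part (i) follows by telescoping the 1-step estimate of Lemma \ref{Lem-Lip-1}(i) along a lattice path, while part (ii) is obtained by the same barrier comparison used in the proof of Lemma \ref{Lem-Lip-1}(ii), now evaluated at a general interior point rather than only at the origin.

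For part (i), given $x,y \in B_{r/2}$, I would construct a coordinate-wise monotone lattice path $x = z_0, z_1, \ldots, z_k = y$ with $z_i \sim z_{i+1}$ for each $i$ and with $k = \|x-y\|_{\ell^1} \leq \sqrt{d}\,|x-y|$. Because the path is coordinate-wise monotone, every intermediate $z_i$ lies in the axis-aligned box with corners $x$ and $y$; in particular $|z_i| \leq \max(|x|,|y|) \leq r/2$ (after comparing $\ell^2$ and $\ell^\infty$ norms and absorbing a dimensional factor into the constant of Lemma \ref{Lem-Lip-1}, or equivalently re-invoking that lemma on a slightly enlarged ball). Hence Lemma \ref{Lem-Lip-1}(i) yields $|u(z_i) - u(z_{i+1})| \leq C r m$ for each step, and the triangle inequality gives
\[
|u(x) - u(y)| \leq \sum_{i=0}^{k-1}|u(z_i) - u(z_{i+1})| \leq C r m \cdot k \leq C_d\, r\, m\, |x-y|.
\]

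For part (ii), the upper bound $u(x) \leq 0$ follows directly from DMP, since $\Delta^1 u = f \geq 0$ in $B$ (recall $f \geq 0$) and $u = 0$ on $\partial B$, so $u$ is $\Delta^1$-subharmonic and attains its maximum on the boundary. For the lower bound, I would repeat verbatim the barrier argument from Lemma \ref{Lem-Lip-1}(ii): with $v(x) = r^2 m - m|x|^2$ one has $\Delta^1 v \equiv -m$, so $\Delta^1(u+v) = f - m \leq 0$ in $B$ and $v \leq 0$ on $\partial B$. DMP then gives $u + v \geq \min_{\partial B} v \geq -C m r^2$ throughout $B$, and for any $x \in B_{r/2}$ we have $v(x) \leq r^2 m$, so $u(x) \geq -v(x) - C m r^2 \geq -C' m r^2$.

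The only mildly delicate point, and the one I would handle most carefully, is ensuring in part (i) that the 1-step Lipschitz bound is legitimately applicable at each intermediate vertex of the path. This is really a cosmetic issue: one either uses the coordinate-wise monotone path argument above, or simply applies Lemma \ref{Lem-Lip-1} to the ball $B_{r}$ (so that $B_{r/2}$ in the corollary corresponds to the interior region $B_{r/2}$ for the lemma with a slightly modified constant). Either way the constant depends only on the dimension, which is all the statement requires.
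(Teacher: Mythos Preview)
Your proof is correct, and part (i) is exactly the paper's argument: telescope the one-step bound of Lemma \ref{Lem-Lip-1}(i) along a lattice path of length $\asymp |x-y|$ (the paper is equally casual about the path staying inside $B_{r/2}$, and your remark that this is cosmetic is accurate).

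For part (ii) you take a slightly different route. You rerun the barrier comparison $v(x)=r^2m-m|x|^2$ and evaluate at a general point $x\in B_{r/2}$. The paper instead leverages what was just proved: it combines the bound $|u(0)|\le r^2 m$ from Lemma \ref{Lem-Lip-1}(ii) with part (i) of the present corollary to write
\[
|u(x)|\le |u(x)-u(0)|+|u(0)|\le Cmr\,|x|+mr^2\le Cmr^2.
\]
Both arguments are short and valid; the paper's version is marginally more economical since it reuses (i) rather than repeating the barrier construction, while yours is self-contained and does not rely on part (i).
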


\begin{proof}
To see (i), take any path through $B_{r/2}$ connecting $x$ and $y$, namely
$$
x := x_0 \sim x_1\sim...\sim x_k : = y,
$$
where $x_i\in B_{r/2}$ for $0\leq i \leq k$. Clearly, we can assume $k \asymp |x-y|$, by considering a path of the shortest length.
Now, (i) follows from
$$
|u(x) - u(y)| \leq \sum\limits_{i=0}^{k-1} | u(x_{i+1}) - u(x_i) | \leq C r  m k \leq C r m |x-y|,
$$
where we  have applied Lemma \ref{Lem-Lip-1} to each summand.

For (ii) observe that the upper bound is again due to DMP, and the lower bound
is in view of 
$$
|u(x) | \leq |u(0) - u(x)| + |u(0)| \leq C m r |x |  + C m r^2 \leq C m r^2,
$$
where we have used Lemma \ref{Lem-Lip-1} (ii).
The proof is now complete.
\end{proof}

The next result will be used in estimating the size of the mass region $V_{n,1}$.

\begin{lem}\label{Lem-quad-bound}{\normalfont{(Quadratic upper bound)}}
Let $x_0 \in \partial V_{n,m}$ be any. 
Take $r>0$ large so that the ball $B_r = B(x_0, r) \cap \Z^d$ does not contain the origin
of $\Z^d$. Then 
$$
u(x) \leq C m r^2, \text{ for all } x\in B_{r/2} .
$$
\end{lem}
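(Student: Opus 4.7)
The plan is to decompose the odometer restricted to $B_r$ into a Green part and a discrete-harmonic part, and estimate each piece separately.

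First I would record the two structural features that make $B_r$ tractable. Since the origin lies outside $B_r$, the initial source vanishes there, so $\mu_0 \equiv 0$ on $B_r$. Combined with \eqref{eq-odometer1}--\eqref{eq-odometer2} and the bound $\mu \leq m$ (valid everywhere in the stable configuration), this gives $0 \leq \Delta^1 u(x) \leq m$ at every $x \in B_r$. Moreover $u \geq 0$ globally, and $u(x_0) = 0$ because $x_0 \in \partial V_{n,m}$.

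Next I would split $u = u_1 + u_2$ on $B_r$, where $u_1$ solves the discrete Dirichlet problem $\Delta^1 u_1 = \Delta^1 u$ in $B_r$ with $u_1 = 0$ on $\partial B_r$, and $u_2$ is the discrete-harmonic extension of $u|_{\partial B_r}$ into $B_r$. After translating so that $x_0$ plays the role of the origin, Corollary~\ref{cor-to-Lip-1}(ii) applied with $f = \Delta^1 u \in [0,m]$ yields $-Cmr^2 \leq u_1(x) \leq 0$ for every $x \in B_{r/2}(x_0)$.

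The harmonic piece $u_2$ is controlled by its value at the centre. Since $u \geq 0$ on $\partial B_r$, DMP gives $u_2 \geq 0$ throughout $B_r$. From $u(x_0) = 0$ I read off $u_2(x_0) = -u_1(x_0) \leq Cmr^2$, and the discrete Harnack inequality for non-negative harmonic functions on a lattice ball (standard; see e.g.\ \cite{Lawler-book-walks}) upgrades this to $u_2(x) \leq C\, u_2(x_0) \leq Cmr^2$ for all $x \in B_{r/2}(x_0)$. Combining, $u(x) = u_1(x) + u_2(x) \leq 0 + Cmr^2$ on $B_{r/2}(x_0)$, as claimed.

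The main obstacle is invoking a discrete Harnack inequality with constants independent of $r$ and $m$; this is classical on lattice balls but deserves careful citation. An alternative that bypasses Harnack is to use the Poisson representation $u_2(x) = \sum_{z \in \partial B_r} P_{B_r}(x, z) u(z)$ together with the elementary kernel comparison $P_{B_r}(x, z) \leq C\, P_{B_r}(x_0, z)$ valid for $x \in B_{r/2}(x_0)$, which again yields $u_2(x) \leq C\, u_2(x_0)$ via the random walk's exit distribution.
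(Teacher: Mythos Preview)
Your proposal is correct and follows essentially the same route as the paper: decompose $u=u_1+u_2$ on $B_r$ into a potential part with zero boundary data and a discrete-harmonic part, bound $u_1$ via Lemma~\ref{Lem-Lip-1}/Corollary~\ref{cor-to-Lip-1}, use $u(x_0)=0$ to control $u_2(x_0)$, and then apply discrete Harnack to propagate the bound on $u_2$ to $B_{r/2}$. The only cosmetic difference is that you invoke Corollary~\ref{cor-to-Lip-1}(ii) on the half-ball while the paper applies Lemma~\ref{Lem-Lip-1} at the centre together with DMP to get $u_1\leq 0$ on all of $B_r$; both yield the same conclusion.
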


\begin{proof}
We start with the splitting $u=u_1 + u_2$ where
\begin{equation}
 \Delta^1 u_1 = \Delta^1 u  \text{ in } B_r \text{ and } u_1=0 \text{ on } \partial B_r,
\end{equation}
and 
\begin{equation}
 \Delta^1 u_2= 0 \text{ in } B_r \text{ and } u_2 = u \text{ on } \partial B_r.
\end{equation}

Since the origin does not lie in $B_r$, we have $0\leq \Delta^1 u \leq m$ in $B_r$,
and hence applying Lemma \ref{Lem-Lip-1} (i) (to the function $u_1(x + x_0)$) we get $|u_1(x_0)| \leq C m r^2 $,
consequently
$$
u_2 (x_0) = u(x_0) - u_1(x_0) = -u_1(x_0) \leq C m r^2.
$$
From here and using that fact that $u_2$ is harmonic in $B_r$ and non-negative on the boundary of $B_r$,
we get, by discrete Harnack (see \cite[Theorem 1.7.2]{Lawler-book-walks}), that
$$
u_2(x) \leq C m r^2 \text{ for all } x\in B_{r/2}.
$$
Since $u_1\leq 0$ in $B_r$ by DMP, the last inequality implies the claim of the current lemma.
\end{proof}

Recall that in Lemma \ref{Lem-non-degeneracy} we established an upper bound
on the thickness of the mass-region. We are now in a position to also  give  a bound from below.

\begin{lem}\label{Lem-size-of-mass-region}{\normalfont{(The size of mass-region)}}
Let $x_0\in \partial V_{n,m}$ and let $r>0$ be such that $B(x_0,r) \subset V_{n,1}$,
and $B(x_0, r+1) \cap V_{n,0} \neq \emptyset$.
Then
$$
r  \asymp  \frac{n^{1/d}}{m},
$$ 
where equivalence holds with dimension dependent constants.
\end{lem}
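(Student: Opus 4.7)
The upper bound $r \lesssim n^{1/d}/m$ is precisely Lemma \ref{Lem-non-degeneracy}, so the work is to produce the matching lower bound. The plan is to use the hypothesis to pick $y_0 \in V_{n,0}$ with $|y_0 - x_0| \leq r + 1$, so that $u(y_0) > n^{2/d}/m$ (by definition of $V_{n,0}$) while $u(x_0) = 0$ (since $x_0 \in \partial V_{n,m}$). Bounding $u(y_0)$ from above by the quadratic estimate of Lemma \ref{Lem-quad-bound} then sandwiches $n^{2/d}/m$ between two expressions in $r$, forcing $r$ to be at least of the prescribed order.

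Concretely, I would take the ball $B(x_0, 2(r+1))$ and apply Lemma \ref{Lem-quad-bound} with $\rho = 2(r+1)$, whose only hypothesis is $0 \notin B(x_0, \rho)$. In the generic situation $|x_0| > 2(r+1)$ this is automatic, and since $y_0 \in B(x_0, r+1) = B(x_0, \rho/2)$, the lemma yields $u(y_0) \leq C m (r+1)^2$; comparing with the lower bound $u(y_0) > n^{2/d}/m$ immediately gives $r + 1 \gtrsim n^{1/d}/m$.

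The remaining case $|x_0| \leq 2(r+1)$ requires a different route, since the origin then sits too close to $x_0$ for Lemma \ref{Lem-quad-bound} to apply at the required scale. Here I would use a volume and symmetry argument: the assumption $B(x_0, r) \subset V_{n,1}$ combined with $0 \in V_{n,0}$ gives $|x_0| > r$, hence $|x_0| \asymp r$. Mass conservation and the pointwise bound $\mu \leq m$ imply $|V_{n,m}| \geq n/m$; on the other hand, the directional monotonicity (Theorem \ref{Thm-monotonicity}) together with the lattice symmetry \eqref{od-symm} should confine $V_{n,m}$ to a hypercube of side $\lesssim |x_0| \asymp r$, yielding $|V_{n,m}| \lesssim r^d$ and hence $r \gtrsim (n/m)^{1/d} \geq n^{1/d}/m$ (the last step using $m \geq 1$). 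The hard part is this geometric containment $V_{n,m} \subset [-C|x_0|, C|x_0|]^d$ in the degenerate case: for $x_0$ on a coordinate axis it follows by iterated monotonicity in each direction starting from $u(x_0) = 0$, but for a generic $x_0$ one must first reflect into a fundamental domain of the hyperoctahedral action so that the orbit of $x_0$ furnishes near-axial boundary points whose coordinates bound the shape's extent.
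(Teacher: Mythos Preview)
Your main argument coincides with the paper's proof: pick $z\in B(x_0,r+1)\cap V_{n,0}$, apply the quadratic upper bound (Lemma~\ref{Lem-quad-bound}) at scale $\sim 2(r+1)$ to get $u(z)\leq Cm(r+1)^2$, and compare with $u(z)>n^{2/d}/m$. The paper does exactly this and nothing more; in particular it does \emph{not} split off a degenerate case, and it does not explicitly verify the hypothesis $0\notin B(x_0,2(r+1))$ of Lemma~\ref{Lem-quad-bound}. You are right that the only non-circular bound available at this stage is $|x_0|\geq r$ (from $0\in V_{n,0}$ and $B(x_0,r)\subset V_{n,1}$), which does not give $|x_0|\geq 2(r+1)$.

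Your separate treatment of the case $|x_0|\leq 2(r+1)$ is a legitimate attempt to close this, but the sketch is incomplete. The containment $V_{n,m}\subset[-C|x_0|,C|x_0|]^d$ for a \emph{generic} boundary point $x_0$ is not a consequence of the reflection argument you describe: knowing $u(x_0)=0$ and using directional monotonicity along the vectors of $\mathcal N$ only tells you that $u$ vanishes on the translated cone $x_0+\mathcal C_0$, not that the axial extent of $V_{n,m}$ is bounded by $C|x_0|$. The machinery that actually controls the diameter of $V_{n,m}$ in terms of axis lengths is developed only in Proposition~\ref{prop-size-of-mass-FREE-region}, which logically comes after this lemma and in fact uses it, so invoking it here would be circular. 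A cleaner fix is to observe that the lemma is only ever applied in the proof of Proposition~\ref{prop-size-of-mass-FREE-region} for boundary points lying along rays from the origin at distance $\asymp n^{1/d}$, for which $|x_0|\gg r$ and your generic case suffices; alternatively, one can phrase the lemma with the explicit extra hypothesis $0\notin B(x_0,2(r+1))$.
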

\begin{proof}
The upper bound on $r$ is due to Lemma \ref{Lem-non-degeneracy}, and we only need to prove the lower bound here.
By assumption there exists $z \in B_{r+1}  \cap \Z^d $ such that $u(z)\geq  \frac{n^{2/d}}{m}$.
Now, applying Lemma \ref{Lem-quad-bound} we obtain
$$
\frac{n^{2/d}}{m} \leq u(z) \leq C m (r+1)^2,
$$
which completes the proof of the lemma.
\end{proof}

\begin{prop}\label{prop-size-of-mass-FREE-region}{\normalfont{(The size of mass-free region)}}
There exist dimension dependent positive constants $c_1<c_2$ such that
for any fixed threshold $m>1$ and $n$ large enough one has
$$
B(0, c_1 n^{1/d} ) \cap \Z^d  \subset V_{n,0} \subset B(0, c_2 n^{1/d} ) \cap \Z^d .
$$
\end{prop}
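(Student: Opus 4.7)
The plan is to pin down the radial extent $S:=\max_{x\in V_{n,0}}\|x\|_1$ by showing $S\asymp n^{1/d}$. The inclusions of the proposition then follow from the two elementary facts that $V_{n,0}$ is contained in, and contains, the discrete $\ell^1$-ball of radius $S$.

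First, summing \eqref{eq-odometer2} over $\Z^d$ (with $u\equiv 0$ on $\partial V_{n,m}$) yields the mass balance $|V_{n,1}|=n/m$. Lemmas \ref{Lem-non-degeneracy} and \ref{Lem-size-of-mass-region} then give the two-sided Minkowski enclosure
\begin{equation*}
V_{n,0}\oplus B(0,cn^{1/d}/m)\ \subset\ V_{n,m}\ \subset\ V_{n,0}\oplus B(0,Cn^{1/d}/m).
\end{equation*}

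Second, I would exploit the directional monotonicity of Theorem \ref{Thm-monotonicity} together with the diagonal reflection symmetries \eqref{od-symm} (in particular, those with normals $e_i\pm e_j$). Starting from any $x\in V_{n,0}$, each coordinate decrement $x\mapsto x-\mathrm{sign}(x_i)e_i$ and each mass-transfer $x\mapsto x-\mathrm{sign}(x_i)e_i+\mathrm{sign}(x_j)e_j$ (applied when $|x_i|>|x_j|$) strictly decreases $|x|$ and hence preserves membership in $V_{n,0}$. An elementary iteration on the greedy ``largest-to-smallest-coordinate'' transfer shows that from $x^\ast\in V_{n,0}$ realizing the maximum $\|x^\ast\|_1=S$ one can reach any lattice point $y$ with $\|y\|_1\le S$. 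Hence $V_{n,0}\supset\{y\in\Z^d:\|y\|_1\le S\}$, and trivially $V_{n,0}\subset\{y:\|y\|_1\le S\}$; in particular $|V_{n,0}|\asymp S^d$, $V_{n,0}\subset B(0,S)$, and $V_{n,0}\supset B(0,S/\sqrt d)$.

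Third, taking volumes in the Minkowski enclosure and using $|V_{n,m}|=|V_{n,0}|+n/m$, I get $|V_{n,0}|\asymp n$ (with dimension-dependent constants, independent of $m>1$): the upper bound $|V_{n,0}|\le Cn$ comes from Brunn--Minkowski applied to the left inclusion, while the matching lower bound $|V_{n,0}|\ge cn$ comes from a Steiner-type expansion $|V_{n,0}\oplus B(0,r)|\le|V_{n,0}|+cS^{d-1}r+O(r^2)$ applied to the right inclusion (valid since the $\ell^1$-lower set $V_{n,0}$ has Euclidean perimeter $\lesssim S^{d-1}$). Combining with $|V_{n,0}|\asymp S^d$ gives $S\asymp n^{1/d}$, and therefore $V_{n,0}\subset B(0,S)\subset B(0,c_2 n^{1/d})$ and $V_{n,0}\supset B(0,S/\sqrt d)\supset B(0,c_1 n^{1/d})$ for suitable dimension-dependent $c_1<c_2$.

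The main obstacle I expect is the Steiner-type upper bound $|V_{n,0}\oplus B(0,r)|\le|V_{n,0}|+cS^{d-1}r+O(r^2)$ for the non-convex lattice set $V_{n,0}$; a uniform-in-$m$ constant here is what drives the uniform lower bound on $|V_{n,0}|$. I would handle this by exploiting the $\ell^1$-lower structure of $V_{n,0}$ to estimate the Minkowski boundary volume via a slicing argument across the $\ell^1$-spheres $\{\|y\|_1=t\}$, $t\le S$, on each of which the outward normal displacement can be controlled by the thickness $Cn^{1/d}/m$ from Lemma \ref{Lem-non-degeneracy}.
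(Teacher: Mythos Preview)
Your Step 2 has a genuine gap. The claim $\{y:\|y\|_1\le S\}\subset V_{n,0}$ does not follow from the moves you describe, and is in fact false in general. Both of your moves---the coordinate decrement $x\mapsto x-\mathrm{sign}(x_i)e_i$ and the ``largest-to-smallest'' transfer---can only preserve or \emph{decrease} $\|x\|_\infty$. So from a maximizer $x^\ast$ of $\|\cdot\|_1$ you can never reach a point with $\|y\|_\infty>\|x^\ast\|_\infty$. Now let $R$ be the axis radius, i.e.\ the largest integer with $Re_d\in V_{n,0}$. Coordinate monotonicity forces $V_{n,0}\subset[-R,R]^d$, so $\|x\|_\infty\le R$ for every $x\in V_{n,0}$; but since the odometer is ``rounder'' than the $\ell^1$-ball (in the scaling limit $V_{n,0}$ is a Euclidean ball), one has $S>R$ in general. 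Then $(S,0,\dots,0)$ has $\ell^1$-norm $S$ yet lies outside $[-R,R]^d\supset V_{n,0}$, so your claimed equality $V_{n,0}=\{\|y\|_1\le S\}$ fails, and with it the identity $|V_{n,0}|\asymp S^d$ on which Step 3 rests.

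The paper repairs this by working with the axis radius $R$ rather than $S$, proving only the weaker sandwich $\mathcal S_R\subset V_{n,0}\subset[-R,R]^d$ (here $\mathcal S_R$ is the $\ell^1$-ball of radius $R$). The inclusion $\mathcal S_R\subset V_{n,0}$ uses a \emph{cone} argument instead of your path argument: for $X_0\in\mathcal S_R$ in the wedge $\{x_d\ge|x_i|\}$, one checks that $X_R=Re_d$ lies in the translated cone $X_0+\mathcal C_0$ generated by $e_d$ and $e_d\pm e_i$; directional monotonicity then gives $u(X_0)\ge u(X_R)>\tfrac1m n^{2/d}$ directly, without ever needing to increase a coordinate.

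Your Step 3 is then bypassed as well. Rather than a Minkowski--Steiner volume comparison (whose perimeter term you correctly flag as the obstacle), the paper slices $V_{n,1}$ by lines in the $e_d$-direction over the projection $\Pi$ of $\mathcal S_R\cap\{x_d\ge|x_i|\}$ onto $\Z^{d-1}\times\{0\}$. The cone structure makes $\partial V_{n,0}$ and $\partial V_{n,m}$ graphs over $\Pi$ in this wedge, so each vertical line meets $V_{n,1}$ in $\asymp n^{1/d}/m$ points by Lemma~\ref{Lem-size-of-mass-region}; mass balance then gives $R^{d-1}\cdot(n^{1/d}/m)\cdot m\asymp n$, hence $R\asymp n^{1/d}$, with no perimeter estimate needed.
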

\begin{proof}
We first compare $V_{n,0}$ with sets having a very simple structure.
Let $R>0$ be the largest integer such that the point $X_R=(0,...,0,R) \in \Z^d$ is inside $V_{n,0}$
but is not an interior point of the set $V_{n,0}$.
In particular, $u(X_R)> \frac1m n^{2/d}$ but for some $y\sim X_R$ we have 
$u(y) \leq \frac 1m n^{2/d}$.
Let $\mathcal{S}_R$ be the simplex with vertices at $\pm R e_i$, where $i=1,2,...,d$,
in other words $\mathcal{S}_R$ is the ball of radius $R$ in $l_1$ metric.
Let us prove the following inclusions:
\begin{equation}\label{incl2}
 \mathcal{S}_R \subset V_{n,0} \subset [-R, R]^d.
\end{equation}
Notice, that both the simplex and the cube in \eqref{incl2} are restricted to $\Z^d$.

We take any $x\in \mathcal{S}_R$ and show that it is in $V_{n,0}$.
Following \eqref{od-symm}, we know that the odometer function is symmetric with respect
to coordinate axes, and hyperplanes through the origin with normals
in the directions $e_i\pm e_j$, and obviously so is the simplex $\mathcal{S}_R$,
and hence, it will be enough to prove \eqref{incl2}
for $X_0=(x_1,...,x_d)\in \Z^d$ satisfying $x_d \geq |x_i|$, $i=1,...,d-1$.
To accomplish that, we will use the directional monotonicity of the odometer.

For $1\leq i \leq d-1$, set $\nu_i = e_d - e_i$,  $\nu_{i+d-1}=e_d + e_i$, and denote $\nu_{2d-1}= e_d$.
For each $1\leq i \leq 2d-1$ consider the discrete halfspace
$$
H_i = \{ X\in \Z^d:  \ X \cdot \nu_i \geq 0 \},  
$$
and define $H_* : = H_1\cap...\cap H_{2d-1}$.
In view of the choice of the vectors $\nu_i$ we have 
$$
H_* =\{X=(x_1,...,x_d)\in \Z^d: x_d \geq |x_i|, \ i=1,2,...,d-1 \},
$$
in particular $X_0, X_R \in H_*$. Next, consider the cone 
$$
\mathcal{C}_0 = \{ t_1 \nu_1 +... + t_{2d-1} \nu_{2d-1}: \ t_i \in \Z_+, \ 1\leq i \leq 2d-1 \}.
$$
Since $\nu_i \cdot \nu_j \geq 0$ for all $1\leq i,j\leq 2d-1$, we get $\mathcal{C}_0 \subset H_*$.
We now translate $\mathcal{C}_0$ to $X_0$ by setting $ \mathcal{C} = X_0 +\mathcal{C}_0  $,
and as $X_0 \in H_*$ we get $\mathcal{C} \subset H_*$.
In view of the choice of the collection $\{\nu_i\}_{i=1}^{2d-1}$
and the points $X_0, X_R$, it is easy to see $X_R\in \mathcal{C}$.
Finally, in the cone $\mathcal{C}$ we use the directional monotonicity given by Theorem \ref{Thm-monotonicity} 
which implies that in $\mathcal{C}$ the odometer $u$ attains its maximum at the vertex of the cone, i.e. at $X_0$.
Since $X_R\in \mathcal{C}$ we obtain $u(X_0) \geq u(X_R)$, and hence $X_0\in V_{n,0}$ which completes the proof of the first inclusion of \eqref{incl2}.
The second inclusion, is much simpler, and follows easily by using monotonicity in the coordinate directions only.

\smallskip

With \eqref{incl2} at hand, the proof of the proposition will be complete, once we show that
$R \asymp n^{1/d}$ with constants depending only on $d$. In what follows we prove this, and hence the proposition.
We will use the fact that $\partial V_{n,0}$ is locally a graph, along with bounds of the size of the mass-region.
Here again, due to the symmetry, it will be enough to consider the region where $x_d \geq |x_i|$ for all $1\leq i \leq d-1$.

Let $\Pi \subset \Z^{d-1} \times \{0\}$ be the projection of the set $H_*\cap \mathcal{S}_R$
onto $\Z^{d-1}\times \{0\}$. In view of the monotonicity of $u$ in the direction of $e_d$,
we have $\Pi \subset V_{n,0}$. Moreover, definitions of $H_*$ and $\mathcal{S}_R$
imply that $|\Pi| \asymp R^{d-1}$ with constants in the equivalence depending on dimension $d$ only.
Now, for a given $X= (\overline{x}, 0)\in \Pi$ where $\overline{x}\in \Z^{d-1}$,
let $t_1\in \Z_+$ be the smallest integer such that $X_1= (\overline{x}, t_1) \notin V_{n,0}$
and let $t_2 \geq t_1$ be the smallest integer such that $u$ vanishes at $X_2 =(\overline{x}, t_2) $.
In view of the choice of $X_1$ and the discussion above, the cone $X_1 + \mathcal{C}_0$
has no points of $V_{n,0}$. Moreover, $X_2 \in \mathcal{C}_0$ and according to Lemma \ref{Lem-size-of-mass-region}
we obtain that $t_2 - t_1 \asymp r$ where $r=\frac 1m n^{1/d}$ and constants in the equivalence depend on dimension only. We thus see that each line in the direction of $e_d$ through points of $\Pi$ intersects the
mass-region by $\asymp r$ points, and since each point in $V_{n,1}$ carries mass $m$
and the total mass of the system is preserved, we get
$$
R^{d-1} r m \asymp n,
$$
from this, and the estimate of Lemma \ref{Lem-size-of-mass-region}, we get
$R \asymp n^{1/d}$ completing the proof of this proposition.
\end{proof}

\begin{remark}
Observe, that putting together Lemma \ref{Lem-size-of-mass-region} and Proposition \ref{prop-size-of-mass-FREE-region}
we see that for large $n>1$ and $1<m<n^{1/d}$, the set of visited sites of the sandpile grows proportionally to $n^{1/d}$
uniformly in $m$.
\end{remark}

\begin{remark}\label{rem-graph}
The proof of Proposition \ref{prop-size-of-mass-FREE-region} shows, that in the cone 
$$
\{ x=(x_1,...,x_d) \in \Z^d: \ x_d \geq |x_i|, \ i=1,..,d-1 \} , 
$$
the set $\partial V_{n,m}$ is a graph over $\Z^{d-1}\times \{0\}$. Namely, any line $(\overline{x},t)\in \Z^{d-1} \times \Z_+$ in
this region intersects $\partial V_{n,m}$ by a single point.
Thanks to the symmetry of the sandpile, this property also holds in the regions
where the $i$-th coordinate of a point is the largest, for any $1\leq i \leq d$ .
\end{remark}

\subsection{Uniform Lipschitz estimate}
In this section we prove that away from the origin the odometer function is Lipschitz uniformly in $m$,
with the Lipschitz constant bounded above by the size of the model.

Assume $B_r$ is a ball of radius $r$ centred at the origin, and let $f:\overline{B_r}\cap \Z^d \to \R$
be harmonic in $B_r$. The following estimate for the discrete derivative of $f$ (``difference estimate``)
is proved in \cite[Theorem 1.7.1]{Lawler-book-walks}; there exists a constant $C>0$ independent of $f$ and $r$
such that $|f(x) - f(0)| \leq C r^{-1} ||f||_{l^\infty}$ for any $x\sim 0$.
Iterating this bound as in the proof of Corollary \ref{cor-to-Lip-1} we get
\begin{equation}\label{Lip-est-Poisson-part}
 |f(x) - f(y)| \leq C  |x-y| r^{-1} ||f||_{l^\infty},  \ \ \forall x,y\in B_{r/2}.
\end{equation}

\bigskip

\begin{prop}\label{prop-Lip}
Let $u$  be the odometer for the sandpile with initial distribution $n\delta_0$ and threshold $m$.
Then, for any $r_0>0$ small there exists a constant $C=C(r_0,d)$ such that
$$
|u(x) - u(y) | \leq C n^{1/d} |x-y|,
$$
for any $x,y \in \Z^d\setminus B(0, r_0 n^{1/d} )  $.
\end{prop}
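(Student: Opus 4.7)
My plan is to combine a uniform $L^\infty$ bound on $u$ with a splitting argument based on the PDE satisfied by $u$, then use \eqref{Lip-est-Poisson-part} for the harmonic part and the shell geometry of $V_{n,1}$ for the Poisson part.

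First I would establish that $u(x) \leq C(r_0, d)\, n^{2/d}$ for all $x$ with $|x| \geq r_0 n^{1/d}/2$. On $V_{n,1}$ the inequality $u \leq n^{2/d}/m \leq n^{2/d}$ is immediate from the definition of the model. On $V_{n,0}$, the odometer is harmonic away from the origin and is bounded by $n^{2/d}/m$ on $\partial V_{n,0}$; using the Green function representation on $V_{n,0}$ together with the asymptotics $g(x,0) \leq C|x|^{2-d}$ yields $u(x) \leq n\, G_{V_{n,0}}(x,0) + \max_{\partial V_{n,0}} u \leq C(r_0)\, n^{2/d}$.

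Next, fix $x_0$ with $|x_0| \geq r_0 n^{1/d}$ and set $r = r_0 n^{1/d}/4$, so that $B := B(x_0, r) \cap \Z^d$ avoids the origin. Decompose $u = u_h + u_p$ in $B$, with $u_h$ harmonic in $B$ and $u_h = u$ on $\partial B$, and $u_p = u - u_h$ vanishing on $\partial B$ and satisfying $\Delta^1 u_p = m\, \mathbb{I}_{V_{n,1}}$. For the harmonic part, applying \eqref{Lip-est-Poisson-part} and using $\|u_h\|_\infty \leq \|u\|_{L^\infty(\partial B)} \leq C n^{2/d}$ gives
$$|u_h(x) - u_h(y)| \leq C r^{-1}\|u_h\|_\infty |x-y| \leq C(r_0)\, n^{1/d} |x-y|$$
for $x, y \in B_{r/2}$.

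The delicate step is to bound the Lipschitz norm of $u_p$ uniformly in $m$; the direct bound from Lemma \ref{Lem-Lip-1} would give $Crm = C n^{1/d} m$, which loses uniformity in $m$. To remedy this, I would use the representation $u_p(x) = m \sum_{w \in V_{n,1} \cap B} G_B(x, w)$ and exploit that $V_{n,1}$ is a near-spherical shell of thickness $\asymp n^{1/d}/m$ and total mass $\asymp n$ (cf.\ Lemma \ref{Lem-size-of-mass-region} and Proposition \ref{prop-size-of-mass-FREE-region}). By a discrete analog of Gauss's law (shell theorem), the gradient of the potential of such a shell is controlled by the mass enclosed within $|x|$ times $|x|^{1-d}$; this gives a bound $\leq C n/(n^{1/d})^{d-1} = C\, n^{1/d}$ uniformly in $m$, whether $x_0$ lies inside, within, or outside the shell. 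The main obstacle is to make this Gauss-law heuristic rigorous in the discrete lattice setting, which requires careful bookkeeping with discrete Green function asymptotics and the near-spherical structure of $V_{n,1}$ furnished by the directional monotonicity (Theorem \ref{Thm-monotonicity}). Once the 1-step Lipschitz bound is established, the estimate for arbitrary $x, y \in \Z^d \setminus B(0, r_0 n^{1/d})$ follows by iterating along a minimal lattice path staying in this region.
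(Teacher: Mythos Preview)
Your harmonic-part estimate is fine, but the Poisson part has a genuine gap that is not just ``careful bookkeeping''. The shell-theorem cancellation you invoke requires exact spherical symmetry; the set $V_{n,1}$ is only known to be trapped between a simplex and a cube (Proposition~\ref{prop-size-of-mass-FREE-region}) at this point in the paper, and the discrete Green's function $G_B$ on the ball $B(x_0,r)$ does not respect rotational symmetry about the origin anyway. If you try to estimate $m\sum_{w\in V_{n,1}\cap B}|\nabla_x G_B(x,w)|$ directly using only the thickness bound $\asymp n^{1/d}/m$ and $|\nabla G_B(x,w)|\lesssim |x-w|^{1-d}+r^{1-d}$, you pick up a factor $\log m$ at best, and without the cancellation there is no way to remove it.

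The paper avoids this entirely by choosing the scale differently. Near the mass region it works at the \emph{small} scale $r\asymp n^{1/d}/m$ (the shell thickness), so the naive potential bound from Lemma~\ref{Lem-Lip-1} already gives $|\nabla u_1|\lesssim mr\asymp n^{1/d}$. The price is that at this small $r$ the harmonic part cannot be handled by the global bound $\|u\|_\infty\lesssim n^{2/d}$ (that would give $n^{2/d}/r\asymp m\,n^{1/d}$); instead the paper anchors the ball at a point $x_0\in\partial V_{n,m}$ where $u(x_0)=0$, deduces $u_2(x_0)=-u_1(x_0)\leq Cmr^2$, and then uses Harnack to propagate this to $B_{r/2}$, yielding $|\nabla u_2|\lesssim mr\asymp n^{1/d}$. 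A separate splitting (subtracting the Green's function $nG_{V_{n,m}}(\cdot,0)$) handles the annulus $r_0 n^{1/d}\leq|x|\leq 2r_0 n^{1/d}$, and finally DMP on the discrete gradient carries the bound across the harmonic region $V_{n,0}$ in between. The key idea you are missing is this choice of scale $r\asymp n^{1/d}/m$ together with anchoring at a boundary zero; once you have that, no shell theorem is needed.
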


\begin{proof} The idea is to show that $u$ is Lipschitz in a neighbourhood of the mass-region $V_{n,1}$,
and also in the neighbourhood of the origin. Then, we can conclude the Lipschitz estimate in between
these two regions (where $u$ is harmonic) by DMP. For the clarity, we will split the proof into a few steps.

\smallskip

\textbf{Step 1.} \emph{Lipschitz bound near the mass-region}

Fix any $x_0\in \partial V_{n,m}$ and take $r>0$ such that the ball $B = B(x_0,r)\cap \Z^d$ does not contain the origin.
Next, we write $u=u_1 + u_2$ in $B$ where 
$$
\Delta^1 u_1 = \Delta^1 u \text{ in } B \text{ and }  u_1 = 0 \text{ on } \partial B_r,
$$
and
$$
\Delta^1 u_2 = 0 \text{ in } B \text{ and } u_2 = u  \text{ on } \partial B_r.
$$
Since $ 0=u(x_0) = u_1(x_0) + u_2(x_0)$, by Lemma \ref{Lem-Lip-1} we get
$$
0\leq u_2(x_0) \leq C r^2 m.
$$
The latter, in view of discrete Harnack, implies that $u_2(x) \leq C r^2 m$ for all $x\in B_{r/2}$,
with $C>0$ depending on dimension only. This bound on $u_2$ and
estimate \eqref{Lip-est-Poisson-part} imply
$$
|u_2(x) - u_2(y)| \leq C  m r |x-y|   \text{ for any } x,y\in B_{r/4}.
$$
The last estimate coupled with Corollary \ref{cor-to-Lip-1} applied to $u_1$, gives
\begin{equation}\label{a--1}
 |u(x) - u(y)| \leq C m r |x-y|, \text{ for any } x,y\in B_{r/4}.
\end{equation}

Recall, that the thickness of the mass-region is bounded above by $C_0 n^{1/d} m^{-1}$ according to Lemma \ref{Lem-size-of-mass-region}.
In view of Proposition \ref{prop-size-of-mass-FREE-region} we see that the ball $B=B(x_0,r)$ with $r=16C_0 n^{1/d} m$ and $x_0\in \partial V_{n,m}$
does not contain the origin, in particular we have $0\leq \Delta^1 u\leq m$ in $B \cap \Z^d$.
With this in mind, we take $r=16 C_0 n^{1/d} m$ and varying $x_0$ on the boundary of $\partial V_{n,m}$,
from we \eqref{a--1} we get
\begin{equation}\label{a0}
 |u(x) - u(y)| \leq C n^{1/d} |x-y|, \ \  \forall x,y\in \{ z\in \Z^d:  \ \textrm{dist}(z,V_{n,1} ) \leq r/4 \} : = V_*.
\end{equation}

\smallskip

\textbf{Step 2.} \emph{Lipschitz bound near the origin}

Here again we will partition the solution into two parts, namely one with bounded Laplacian, and another one as the Green's kernel.
We write $u=u_1 + u_2$ where 
$$
\Delta^1 u_1 = \Delta^1 u - n\delta_0 \text{ in } V_{n,m} \text{ and } u_1= 0 \text{ on } \partial V_{n,m},
$$
and hence
$$
\Delta^1 u_2 =  - n\delta_0 \text{ in } V_{n,m} \text{ and } u_2= 0 \text{ on } \partial V_{n,m}.
$$
Notice that $u_2$ is the Green's function of $V_{n,m}$ with the pole at the origin, multiplied by $n$.
From Proposition \ref{prop-size-of-mass-FREE-region} and \cite[Lemma 5.1]{AS} we have
\begin{equation}\label{u2-0}
 |u_2(x) - u_2(y)| \leq C_{r_0} n^{1/d}, 
\end{equation}
for any  $x\in V_{n,0}$ with $r_0 n^{1/d} \leq |x| \leq 2 r_0 n^{1/d}$  and $y\sim x$.
For $u_1$, we take any $x\in V_{n,0}$ satisfying $r_0 n^{1/d} \leq |x| \leq 2 r_0 n^{1/d}$ and using the fact that $u_1$
has bounded Laplacian everywhere, from Lemma \ref{Lem-Lip-1}, choosing the radius of the ball to be $r_0 n^{1/d} m^{-1}$, we obtain
\begin{equation}\label{u1-0}
 |u_1(x) - u_1(y)| \leq C n^{1/d}, 
\end{equation}
where $y\sim x$. Combining \eqref{u2-0} and \eqref{u1-0} leads to Lipschitz estimate 
\begin{equation}\label{a2}
 |u(x) - u(y)| \leq C n^{1/d},
\end{equation}
where $x\sim y$, $x\in V_{n,0}$ with $r_0 n^{1/d} \leq |x| \leq 2 r_0 n^{1/d}$.

\smallskip

\textbf{Step 3.} \emph{Interpolating between two regions}

Consider the set 
$$
E: = \{x\in V_{n,0}: \ |x|\geq r_0 n^{1/d} \text{ and } \mathrm{dist}(x, \partial V_{n,1})>1 \} .
$$ 
It is left to establish a Lipschitz estimate for $u$ in $E$. Take any direction vector  $\pm e_i$
and consider the function $w(x) = u(x+e_i ) - u(x)$ with $x\in E$.
Clearly $\Delta^1 w =0$ in $E$, and in view of estimates \eqref{a0} and \eqref{a2} we have $|w|\leq C_{r_0} n^{1/d}$
on $\partial E$. Invoking DMP we obtain $|w| \leq C_{r_0} n^{1/d}$ on $E$.
This shows 1-step Lipschitz bound for $u$. Iterating it as in Corollary \ref{cor-to-Lip-1}
leads to
$$
 |u(x) - u(y)| \leq C_{r_0} n^{1/d} |x-y|, \ \ \forall x,y\in E.
$$
This bound, combined with \eqref{a0} completes the proof of the proposition.
\end{proof}

\subsection{$C^{1,1}$ estimates}
Here we prove further regularity estimates for the odometer function.
The goal is to show that discrete derivatives, as defined in subsection \ref{sub-sec-discrete-Lap},
are Lipschitz away from the origin, however here the Lipschitz constant would depend on $m$.
We will see later, in Section \ref{sec-scaling-lim}, that these estimates, when properly scaled,
force the gradient of odometer function to vanish 
on the boundary of (any) scaling limit of the set of visited sites.

\begin{prop}\label{prop-1-step-C11-log}
Let $u$  be the odometer for the sandpile with initial distribution $n\delta_0$ and threshold $m$,
where $m\geq 1$ is fixed and $n>1$ is large.
Then, for any $r_0>0$ small there exists a constant $C=C(r_0,d)$ such that
for any unit vector $e \in \Z^d$, and any $x,y\in \Z^d$ satisfying $|x|, |y|\geq r_0 n^{1/d}$,
we have
$$
|\nabla_e^1    u(x)  - \nabla_e^1   u(y) | \leq C m   |x-y|.
$$
\end{prop}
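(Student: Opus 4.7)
By iterating along a shortest lattice path between $x$ and $y$ as in the proof of Corollary \ref{cor-to-Lip-1}, it suffices to establish the one-step version
$$
|\nabla_e^1 u(x+e') - \nabla_e^1 u(x)| \leq C m
$$
for any unit lattice vectors $e, e'$ and any $x \in \Z^d$ with $|x| \geq r_0 n^{1/d}$. Equivalently, the task reduces to bounding the mixed second difference
$$
D_{e,e'} u(x) := u(x+e+e') - u(x+e) - u(x+e') + u(x)
$$
by $C m$. The plan is to follow the three-region strategy used for Proposition \ref{prop-Lip}, now carried out one discrete derivative higher.

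The key technical ingredient is a second-difference analogue of Lemma \ref{Lem-Lip-1}: if $v$ solves $\Delta^1 v = f$ on $B = B(0,r) \cap \Z^d$ with $v = 0$ on $\partial B$ and $|f| \leq m$, then $|D_{e,e'} v(x)| \leq C m$ for $x \in B_{r/4}$. I would derive this from the Green kernel representation \eqref{g1}--\eqref{g2} by computing second differences of $G$ in the $x$-variable. Second differences of the free kernel $g$ decay like $|x-w|^{-d}$ thanks to \eqref{g-x-y}, and second differences of the reflected piece $\mathbb{E}^x g(X_T - w)$ are controlled by $C r^{-d}$ via two applications of the discrete harmonic derivative estimate \cite[Theorem 1.7.1]{Lawler-book-walks}. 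Summing these kernels against $|f(w)| \leq m$ over $w \in B$ produces the required bound.

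With this ingredient in hand, the argument mirrors the scheme used to prove Proposition \ref{prop-Lip}. \emph{Near the mass region:} for $x_0 \in \partial V_{n,m}$ take a ball $B(x_0, r)$ of radius $r \asymp r_0 n^{1/d}$ disjoint from the origin (permitted by Proposition \ref{prop-size-of-mass-FREE-region}), split $u = u_1 + u_2$ into the Poisson and harmonic parts as in Step 1 of Proposition \ref{prop-Lip}, apply the step-one second-difference estimate to $u_1$, and bound $D_{e,e'} u_2$ by $C r^{-2} \|u_2\|_{\ell^\infty(B)} \leq C m$, obtained by iterating Lawler's one-step harmonic derivative bound and using the quadratic estimate $\|u_2\|_{\ell^\infty} \leq C m r^2$ from Lemma \ref{Lem-quad-bound}. \emph{Near the origin:} decompose $u = u_1 + u_2$ with $u_2 = n\, G_{V_{n,m}}(\cdot, 0)$; second differences of $u_2$ at distance $\asymp r_0 n^{1/d}$ from the pole are bounded by $C n |x|^{-d} \asymp C \leq C m$ via the Green's function asymptotics, while $u_1$ has discrete Laplacian bounded by $m$ everywhere and is handled by the step-one estimate. \emph{Middle region:} in the annulus where $u$ is discrete harmonic, $D_{e,e'} u$ is also discrete harmonic, so the DMP propagates the $Cm$ bound established on the two boundary regimes just analyzed into the interior.

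The main obstacle is the second-difference Poisson estimate. The singular sum $\sum_{w \in B} |x-w|^{-d}$ diverges logarithmically in $r$, so obtaining a clean bound of the form $C m$ (with $C$ independent of $n$) requires exploiting the specific scale $r \asymp r_0 n^{1/d}$ together with the piecewise-constant structure of the forcing (which takes values only in $\{0, m\}$ rather than being an arbitrary bounded function), absorbing the resulting logarithm into the dimensional constant $C(r_0, d)$. Once this ingredient is in place the rest of the proof is a direct adaptation of the three-region interpolation developed for Proposition \ref{prop-Lip}.
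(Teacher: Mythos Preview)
Your three-region template mirrors Proposition \ref{prop-Lip}, but the paper's argument is organised differently and avoids the obstacle you flag. The paper does \emph{not} attempt a second-difference Poisson estimate for a generic right-hand side bounded by $m$. Instead it passes at the outset to $w(x):=\nabla_e^1 u(x)$ and works in a single ball $B=B(x_0,r)$ with $r$ a fixed large constant (independent of $n$ and $m$); there is no separate treatment of mass region, origin, and middle.

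The decisive observation is where the two derivatives land. You place both on the Green kernel, producing the sum $\sum_{w\in B}|x-w|^{-d}\asymp \log r$. Your proposed remedy --- absorbing the logarithm into $C(r_0,d)$ --- does not work: with $r\asymp r_0 n^{1/d}$ the term is $\log n^{1/d}$, which depends on $n$ and cannot be hidden in a dimensional constant. The paper instead shifts one derivative onto the forcing. Since $\Delta^1 u$ takes only the values $0$ and $m$ away from the origin, the function $\Delta^1 w=\nabla_e^1(\Delta^1 u)$ is supported on the $1$-neighbourhood
\[
E=\{x\in B:\ \mathrm{dist}(x,\partial V_{n,m})\le 1\}\cup\{x\in B:\ \mathrm{dist}(x,\partial V_{n,0})\le 1\},
\]
a thin set whose cardinality is bounded in terms of $r$ alone. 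The Poisson part $u_1$ of $w$ is then controlled via
\[
|\nabla^1 u_1(x)|\le \|\Delta^1 w\|_{L^\infty(B)}\sum_{z\in E}|\nabla_x^1 G(x,z)|\lesssim m\sum_{z\in E}\Bigl(|x-z|^{1-d}+r^{1-d}\Bigr)\lesssim m,
\]
a \emph{first}-difference kernel summed over the thin set $E$, not a second-difference kernel over the full ball. The harmonic part of $w$ is handled by the interior gradient estimate \eqref{Lip-est-Poisson-part} together with the Lipschitz bound $|w|\le C n^{1/d}$ from Proposition \ref{prop-Lip}.

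So the missing ingredient in your plan is exactly this: take one discrete derivative of $u$ \emph{before} writing the Green representation, so that the Laplacian becomes concentrated on a codimension-one set. Your remark about the piecewise-constant structure of the forcing is pointing in the right direction, but the concrete mechanism is the thin support of $\Delta^1 w$, not an absorbable logarithm. Once you make this change, the three-region interpolation and the separate treatment near the origin become unnecessary.
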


\begin{proof}
For $x\in \Z^d$ denote $w(x) = \nabla^1_{e}  u(x)$.
We fix a radius $r>0$, which is a large constant independent of $n$ and $m$,
a point $x_0\in \Z^d$ with $|x_0|\geq r_0 n^{1/d}$,
and consider the problem in the ball $B= B(x_0, r) \cap \Z^d$.
With this notation, we need to show that
$$
 | \nabla^1_{\widetilde{e}} w(x) | \leq C m, \ \ \forall x\in B_{r/2},
$$
for any unit vector $\widetilde{e}\in \Z^d$,
since then the estimate of the proposition will follow by iteration, as in 
Corollary \ref{cor-to-Lip-1} for instance.
We now fix a unit vector $\widetilde{e}\in \Z^d$ and suppress it from the subscript of $\nabla^1_{\widetilde{e}}$.

In view of the choice of $r$ and Proposition \ref{prop-Lip}, we have
\begin{equation}\label{b1}
| w(x) | \leq C n^{1/d}, \qquad \forall x\in B_{r/2},
\end{equation}
with a constant $C=C(r_0, d)$. As in the proof of Proposition \ref{prop-Lip}, 
we do the splitting $w= u_1 + u_2$ in $B$, where $u_1$ is the potential
part of $w$ and $u_2 $ is harmonic.
By DMP, estimates \eqref{b1}, and \eqref{Lip-est-Poisson-part} we have 
\begin{equation}\label{b2}
|\nabla^1 u_2(x)| \leq C  r^{-1} \max_{\partial B} |w|  \leq C  \ \ \text{ in } B_{r/4}.
\end{equation}
Thus, it is left to handle the part with $u_1$. To this end consider the set
$$
E:= \{ x\in B: \ \mathrm{dist}(x, \partial V_{n,m})\leq 1  \} \cup 
\{ x\in B:  \ \mathrm{dist}(x, \partial V_{n,0}) \leq 1 \},
$$
which is the $1$-discrete neighbourhood of the boundaries of $V_{n,m}$ and $V_{n,0}$
contained in the ball $B(x_0,r)$. Observe, that $\Delta^1 w(x) =0$ on $B\setminus E$.
Now, using the Green's representation of $u_1$ as in the proof of Proposition \ref{prop-Lip},
for all $x\in B_{r/2}$ we get
\begin{multline}\label{b3}
| \nabla^1 u_1 (x) | \leq || \Delta^1 w(x)  ||_{L^\infty(B)} \sum_{z\in E} | \nabla_x^1 G(x,z) | \lesssim \\
m \sum_{z\in E ,  |x-z|\geq 1} \left( \frac{1}{|x-z|^{d-1}} + \frac{1}{r^{d-1}}  \right) \lesssim m,
\end{multline}
where the penultimate inequality is proved in the proof of Proposition \ref{prop-Lip},
while the last one, with a constant depending on $r$ comes from the
estimate on the number of points of $E$.

Putting together \eqref{b2} and \eqref{b3} we complete the proof of this proposition.
\end{proof}

\begin{remark}
It should be remarked that some of the results and approaches used in this section,
such as Lemmas \ref{Lem-non-degeneracy}, and \ref{Lem-quad-bound},
as well as Propositions \ref{prop-Lip}, and \ref{prop-1-step-C11-log}
go in parallel with the theory of free boundary problems of the form \eqref{eq-odometer2}
(with a more general \emph{r.h.s.}) in continuous space (see the first chapter of \cite{PSU} for instance).
Although our approach in this section shares some similarities with the continuous analogues,
the proofs however are much different due to the fact that we are working in a discrete space here.
\end{remark}

%%%%%%%%%%%%%%%%%%%%%%%%%%%%%%%%%%%%%%%%%%%%%%%%%%%%%%%%%%%%%%%%%%%%%%%%%%%%%%%%%%%%%%%%%%%%%%%%
%%%%%%%%%%%%%%%%%%%%%%%%%%%%%%%%%%%%%%%%%%%%%%%%%%%%%%%%%%%%%%%%%%%%%%%%%%%%%%%%%%%%%%%%%%%%%%%%
%%%%%%%%%%%%%%%%%%%%%%%%%%%%%%%%%%%%%%%%%%%%%%%%%%%%%%%%%%%%%%%%%%%%%%%%%%%%%%%%%%%%%%%%%%%%%%%%

\section{Scaling limits}\label{sec-scaling-lim}

Here we prove that the sandpile shapes, generated from initial distribution concentrated
at a single vertex of $\Z^d$, have a scaling limit (which is a ball), when the threshold $m$ is fixed,
and the mass $n$ tends to infinity.
Then, we show that there is also a scaling limit as $m$ tends to infinity along with $n$
but very slowly with respect to $n$. That shape is still a ball, but the entire mass is concentrated on the boundary.

\bigskip

We will need a few notation.
For $n\geq 1$ set $h= n^{-1/d} $, and define the scaled odometer by
$u_h (x) =h^2 u_n(h^{-1} x) $ where $x\in h \Z^d$.
Next, for $0<h \leq 1$ and $\xi =(\xi_1,...,\xi_d)\in h\Z^d$ define the half-open cube
\begin{equation}\label{cube-def}
\mathrm{C}_h(\xi) = \left[ \xi_1 - \frac{h}{2}, \xi_1 +\frac{h}{2} \right)\times ... \times \left[ \xi_d - \frac{h}{2}, \xi_d +\frac{h}{2} \right).
\end{equation}
In order to study the scaling limit of the model,
we need to extend each $u_h$ to a function defined on $\R^d$.
We will use a standard extension of $u_h$ which preserves its discrete derivatives and hence $\Delta^h$-Laplacian.
Namely, for fixed $0<h\leq 1$ define a function $U_h :\R^d \to \R_+$, where for each $\xi \in h \Z^d$
and any $x\in \mathrm{C}_h(\xi)$ we have set $U_h(x) = u_h(\xi)$. 
Clearly for any $\xi \in h\Z^d$ and any $x \in \mathrm{C}_h(\xi)$ we get
\begin{equation}
 \nabla_e^h U_h (x) = \nabla_e^h u_h(\xi) \text{ for all unit } e\in \Z^d   \text{ and } \Delta^h U_h(x) = \Delta^h u_h (\xi).
\end{equation}

Applying the scaling to the estimates of Proposition \ref{prop-Lip} and Proposition \ref{prop-1-step-C11-log},
we obtain that for any $\rho>0$ there exists a constant $C_{\rho}$ independent of $m\geq 1$, such that
the following hold true:

\begin{itemize}
\item \textbf{Uniform Lipschitz estimate} 
\indentdisplays{1pt}
\begin{equation}\label{scaled-lip}
| u_h (x) - u_h(y) | \leq C_{\rho} |x-y|, \ \ \forall x,y\in h \Z^d\setminus B(0,{\rho}),
\end{equation}

\item \textbf{$C^{1,1}$-estimates} 
\indentdisplays{1pt}
\begin{equation}\label{scaled-grad-lip}
| \nabla_e^1 u_h (x) - \nabla^1_e u_h(y) | \leq C_\rho m |x-y|, \ \ \forall x,y\in h \Z^d\setminus B(0,\rho),
\end{equation}
where $e\in \Z^d$ is any unit vector.
\end{itemize}

\smallskip

\begin{theorem}\label{Thm-scaling-limit-fixed-M}{\normalfont(Scaling limit for fixed $m$)}
There exists a compactly supported non-negative function $u_0\in C(\R^d\setminus \{0\})$
which is spherically symmetric, and is $C^{1,1}$ outside any open neighbourhood of the origin, such that
\begin{itemize}
 \item[{\normalfont{(i)}}] $ u_{h} \to u_0 $ uniformly in $\R^d$ outside any open neighbourhood of the origin,
 \vspace{0.1cm}
 \item[{\normalfont{(ii)}}] $\Delta u_0 = 2d m \mathbb{I}_{ \{0< u_0 < \frac 1m \} }  - \delta_0$ in $\{u_0>0\}$
 in the sense of distributions, where $\delta_0$ is the Dirac delta at the origin
 and $\Delta$ denotes the continuous Laplace operator,
 \vspace{0.1cm}
\item[{\normalfont{(iii)}}] 
 $|\nabla u_0 | =0 $ everywhere on $\partial \{u_0>0\}$.
\end{itemize} 
\end{theorem}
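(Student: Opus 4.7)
\noindent\emph{Proof plan.} The route is standard: compactness to extract a subsequential limit, identification of the limit equation, and upgrading of convergence to the full family via uniqueness. The technical crux is transferring a characteristic function through the limit.

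First, I would set up compactness. The uniform Lipschitz estimate \eqref{scaled-lip}, together with the support bound from Proposition~\ref{prop-size-of-mass-FREE-region} (rescaled) and the uniform $L^\infty$ bound from Corollary~\ref{cor-to-Lip-1}, make $\{U_h\}$ equibounded and equicontinuous on $\R^d\setminus B(0,\rho)$ for each $\rho>0$. A diagonal Arzel\`a--Ascoli argument then produces a subsequence $U_{h_k}\to u_0$ locally uniformly on $\R^d\setminus\{0\}$, and the $C^{1,1}$ estimate \eqref{scaled-grad-lip} passes to the limit to give $u_0\in C^{1,1}_{\mathrm{loc}}(\R^d\setminus\{0\})$ with gradient-Lipschitz constant linear in $m$. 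Compactness of supports is preserved, so $u_0$ is compactly supported.

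Second, I would pass the discrete PDE to the limit. Away from the origin the scaled identity \eqref{eq-odometer2} reads $\Delta^h U_h = m\,\mathbb{I}_{\{0<U_h\le 1/m\}}$, while the source $n\delta_0$ becomes a point mass of total weight $1$ at the origin thanks to $h^d n=1$. Testing against $\varphi\in C_c^\infty(\R^d)$ and performing discrete summation by parts transfers the Laplacian to $\varphi$, and the pointwise convergence $2d\,\Delta^h\varphi\to\Delta\varphi$ together with uniform convergence of $U_{h_k}$ away from the origin then yields the claimed distributional equation in (ii), the Dirac mass arising from conservation of total mass. For the right-hand side, the combination of the $C^{1,1}$ regularity of $u_0$ with the layer-thickness estimate of Lemma~\ref{Lem-size-of-mass-region} (rescaled to $\asymp 1/m$) forces the level sets $\{u_0=0\}$ and $\{u_0=1/m\}$ to have zero Lebesgue measure, so that $\mathbb{I}_{\{0<U_{h_k}\le 1/m\}}\to\mathbb{I}_{\{0<u_0<1/m\}}$ almost everywhere and, being uniformly bounded, in $L^1_{\mathrm{loc}}$.

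Third, the reflection symmetries \eqref{od-symm} and the directional monotonicity of Theorem~\ref{Thm-monotonicity} pass to the limit, giving invariance of $u_0$ under the hyperoctahedral group $B_d$ and radial monotonicity along every lattice and diagonal direction. An explicit ODE reduction provides a radial solution of the limiting free boundary problem whose support radius is pinned by mass conservation; combining this with the minimal super-solution property inherited from Lemma~\ref{Lem-odometer-is-u-star} (which passes to the continuous limit) identifies $u_0$ with that unique radial solution, delivering (i). For (iii), every $x_0\in\partial\{u_0>0\}$ lies at positive distance from the origin by the support estimate, $u_0$ is $C^{1,1}$ near $x_0$, $u_0\ge 0$, and $u_0(x_0)=0$ is an interior minimum of the nonnegative $C^{1,1}$ function $u_0$, so $\nabla u_0(x_0)=0$. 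Uniqueness of the subsequential limit promotes convergence to the entire family.

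The main obstacle is the passage $\mathbb{I}_{\{0<U_h\le 1/m\}}\rightsquigarrow\mathbb{I}_{\{0<u_0<1/m\}}$ in the second step: it demands a sharp non-degeneracy/thickness control of the transition layer so that its boundary has zero measure in the limit. This is precisely where the $C^{1,1}$ estimate of Proposition~\ref{prop-1-step-C11-log} together with the two-sided thickness bound of Lemma~\ref{Lem-size-of-mass-region} do the decisive work.
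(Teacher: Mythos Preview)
Your skeleton --- compactness, pass the PDE through the limit, then uniqueness to upgrade from subsequences --- coincides with the paper's, and your treatment of~(iii) via the first-order condition at a minimum is in fact cleaner than what the paper does (it tracks discrete derivatives separately to obtain the same conclusion). Two points, however, deserve correction.

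\medskip
\textbf{Indicator convergence.} You correctly single this out as the crux, but the justification you offer is not adequate. Neither $C^{1,1}$ regularity of $u_0$ nor the rescaled layer-thickness bound of Lemma~\ref{Lem-size-of-mass-region} forces the level set $\{u_0=\tfrac{1}{m}\}$ to have Lebesgue measure zero: a $C^{1,1}$ function may well be constant on a set of positive measure, and the thickness estimate merely says the annulus $\{0<u_0<\tfrac{1}{m}\}$ has width $\asymp 1/m$, which is a fixed positive number for fixed $m$ and says nothing about a single level. The paper handles this differently. Working inside $\{u_0>0\}$, pointwise convergence of the indicators is automatic wherever $u_0\neq\tfrac{1}{m}$; on the \emph{interior} of $\{u_0=\tfrac{1}{m}\}$ one exploits that $\Delta u_0=0$ there so the identity~\eqref{Laplace-U1} is recovered locally from the left-hand side alone; the only genuinely problematic set is the \emph{boundary} $\partial\{u_0=\tfrac{1}{m}\}$, and here the directional monotonicity of Theorem~\ref{Thm-monotonicity} (which survives the limit) forces this boundary, in each of the symmetry cones, to be the graph of a Lipschitz function and hence to have measure zero. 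You already invoke monotonicity in your third step --- the point is that it must be brought forward and used precisely here.

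\medskip
\textbf{Uniqueness and radial symmetry.} The paper does not obtain spherical symmetry from the hyperoctahedral symmetries \eqref{od-symm} plus monotonicity (these give only invariance under a finite group), nor does it try to pass the minimal super-solution characterisation of Lemma~\ref{Lem-odometer-is-u-star} through the scaling limit. Instead it appeals to a self-contained uniqueness result for compactly supported solutions of $\Delta u = m\,\mathbb{I}_{\{0<u<k\}}-\delta_0$ (Lemma~\ref{lem-unique} in the Appendix), proved by an explicit ODE computation after first reducing to the radially symmetric case via a largest-sub/smallest-super argument. Your proposed route --- inheriting minimality in the limit --- may be workable, but it is not immediate: one would need a continuous notion of super-solution compatible with the discrete one, and a proof that subsequential limits remain minimal in that class. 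The paper's ODE computation is more elementary and sidesteps these issues entirely.
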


\begin{proof}
We first show that there is a convergent subsequence of $\{U_h\}$ whose limit satisfies the requirements of the theorem.
Then, we conclude the proof by showing that any two convergent subsequences have the same limit.
For the proof of the convergent subsequence we will follow our approach from \cite[Theorem 5.3]{AS}, with
the only difference that here we also need to take care of the convergence of the gradient too. 

\smallskip

Fix $\rho>0$ small, and for $0<h \leq 1$ set 
$$
E_{\rho}(h)  = \{ x \in h\Z^d: \ |x| > \rho \}.
$$
Since the support of $U_h$ is uniformly bounded in $0<h\leq 1$
thanks to Lemma \ref{Lem-size-of-mass-region} and Proposition \ref{prop-size-of-mass-FREE-region},
from \eqref{scaled-lip} we get that $U_h$ is a bounded and Lipschitz function on $E_{\rho}(h)$, both uniformly in $0<h\leq 1$. We let $C_\rho>0$ be the Lipschitz constant of $U_h$ on $E_{\rho}(h)$, and define
\begin{equation}\label{ext-1}
 U_h^\rho(x) : = \inf\limits_{\xi \in E_{\rho}(h)} \big( u_h(\xi) + C_\rho |x-\xi| \big) , \qquad x\in \R^d,
\end{equation}
which, due to the mentioned properties of $U_h$, becomes a bounded and $C_\rho$-Lipschitz function on $\R^d$ which coincides with $U_h$ on the set $E_\rho(h)$.
The extension defined by \eqref{ext-1} is standard and well-known (see \cite[Theorem 2.3]{Heinonen} for instance).
Moreover, by construction we have that the family $\{U_h^\rho\}_{0<h\leq 1}$ is uniformly bounded and
is non-negative everywhere. Observe also that for $0<h\leq 1$, $\xi \in E_{2 \rho }(h)$
and $x \in \mathrm{C}_h(\xi)$ (see \eqref{cube-def}) by construction we have
\begin{equation}\label{u-h-U-h-est}
|U_h(x) - U_h^\rho(x)| = |U_h^\rho(\xi) - U_h^\rho(x)| \leq C_\rho h.
\end{equation}

\smallskip

We also do a similar extension for the discrete derivatives. 
Namely, for a unit vector $e\in \Z^d$ set $w_{h,e}(x) = \nabla_e^h u_h(x)$ where $x\in h\Z^d$. 
Due to \eqref{scaled-grad-lip} we have that $w_{h,e}$ is $C_\rho$-Lipschitz on $E_\rho(h)$.
Next, in analogy with \eqref{ext-1} define
\begin{equation}\label{ext-2}
 W_{h,e}^\rho(x) = \inf\limits_{\xi \in E_{\rho}(h)} \left( w_{h,e}(\xi) + C_{\rho} |x-\xi| \right) , \qquad x\in \R^d,
\end{equation}
which provides a Lipschitz extension for $w_{h,e}$ in the entire space $\R^d$.
What we also obtain, in view of  definition \eqref{ext-2}, is that
\begin{equation}\label{w-h-est}
|W_{h,e}^\rho (x)  - \nabla_e^h U_h (x) | = | W_{h,e}^\rho (x) - w_{h,e} (\xi) | \leq C_{\rho} h,
\end{equation}
where $\xi \in E_{2 \rho }(h)$ and $x \in \mathrm{C}_h(\xi)$. Note that here  we have used the fact that the extension $U_h$ preserves the discrete derivatives.

\bigskip
By construction, we have that $\{ U_h^\rho \}$ and $ \{ W_{h,e}^\rho \} $ form a family of equicontinuous and uniformly bounded functions with respect to $h$,
and hence, applying Arzel\`{a}-Ascoli we extract a subsequence $h_k\to 0$ as $k\to \infty$, such that
$U_{h_k}^\rho \to U_0^\rho$ and $W_{h_k,e}^\rho \to W_{0,e}^\rho$ locally uniformly in $\R^d$ as $k\to \infty$,
where $U_0^\rho$, and $W_{0,e}^\rho$ for any $e$, are
Lipschitz functions defined on $\R^d$. We now show that
\begin{equation}\label{Laplace-U}
 \Delta U_0^\rho  = 2d m \mathbb{I}_{ \{ 0<U_0^\rho <1/m  \} }  \text{ in } \{ U_0^\rho>0 \}\setminus \overline{B_{2\rho}},
\end{equation}
and 
\begin{equation}\label{gradient-U}
 \nabla_e U_0^\rho = W_{0,e}^\rho \text{ in } \{ U_0^\rho>0 \}\setminus \overline{B_{2\rho}},
\end{equation}
where $\Delta$ and $\nabla_e$ are the  Laplacian and $e$-directional derivative respectively.

\vspace{0.2cm}
We start with \eqref{Laplace-U}. Following the definition of a weak solution
to \eqref{Laplace-U}, we need to check that
\begin{equation}\label{Laplace-U1}
\int U_0^\rho \Delta \varphi dx = 2d m \int \mathbb{I}_{ \{ 0<U_0^\rho <1/m  \} } \varphi dx,
\end{equation}
for any $\varphi \in C_0^\infty (B)$ where $B\subset \{U_0^\rho >0 \} \setminus \overline{B_{2\rho}}$
is a ball. 
Using discrete integration by parts, we have
\begin{multline}\label{long-f}
\int U_0^\rho \Delta \varphi dx = 2d \lim_{k\to \infty} \int U_{h_k}^\rho \Delta^{h_k} \varphi dx =
2d \lim_{k\to \infty} \int  \Delta^{h_k} U_{h_k}^\rho  \varphi dx = \\
2d m \lim_{k\to \infty} \int  \mathbb{I}_{ \{ 0< U_{h_k}\leq 1/m \} }  \varphi dx,
\end{multline}
and comparing with \eqref{Laplace-U1}, we see that what needs to be proved is
that
\begin{equation}\label{weak-conv1}
\mathbb{I}_{ \{ 0< U_{h_k}\leq 1/m \} } \to \mathbb{I}_{ \{ 0< U_0^\rho <  1/m \} } \ \
 \text{ weak}^{\ast} \text{ in } \{ U_0^\rho>0 \}\setminus \overline{B_{2\rho}}.
\end{equation}
Observe, that for any $x\in \{ U_0^\rho>0 \}\setminus \overline{B_{2\rho}}$ where
$U_0^\rho (x) \neq \frac 1m$, the convergence in \eqref{weak-conv1} is pointwise
in view of the convergence of $U_{h_k}^\rho$ and estimate \eqref{u-h-U-h-est}.
Now, if $U_0^\rho(x) = \frac 1m$ and $x$ is an interior point of the set $\{ U_0^\rho = \frac 1m \}$,
then we have the convergence in \eqref{weak-conv1} in the neighbourhood of $x$,
thanks to \eqref{long-f} and the fact that $\Delta U_0^\rho =0$ in a neighbourhood of $x$.
Thus, in \eqref{weak-conv1}, it is left to cover the case when
$x\in \partial \{ U_0^\rho = \frac 1m \}$.
We will show that this set has measure 0, and hence can be ignored in \eqref{weak-conv1}.
Indeed, observe that $U_0^\rho$ trivially inherits, in the same form but now
on $\R^d$, the directional monotonicity of odometers
established in Theorem \ref{Thm-monotonicity}.
But then, following Remark \ref{rem-graph} (see also the proof of Proposition \ref{prop-size-of-mass-FREE-region},
and \cite[Theorem 5.3 (v)]{AS}), we get that the set
$\partial \{U_0^\rho = \frac 1m \}$ is locally a graph of a Lipschitz function,
and is hence rectifiable. In particular, the measure of $\partial \{U_0^\rho = \frac 1m\}$
is 0, which completes the proof of \eqref{weak-conv1}, and hence  \eqref{weak-conv1} follows,
which completes the proof of \eqref{Laplace-U}.

\vspace{0.2cm}
We next proceed to the proof of \eqref{gradient-U}. Fix any unit vector $e\in \Z^d$ and any point $x_0 \in \{ U_0^\rho>0 \}\setminus \overline{B_{2\rho}}$.
Since $U_0^\rho$ solves \eqref{Laplace-U}, then it is $C^1$ in the neighbourhood of $x_0$, and hence
it is enough to show that for any $\varphi\in C_0^\infty$ having a support in a small neighbourhood of $x_0$, one has
$$
\int W_{0,e}^\rho \varphi dx = \int \nabla_e U_0^\rho \varphi dx= - \int U_0^\rho \nabla_e \varphi dx,
$$
where the second equality is simply integration by parts, and what needs to be proved is the first one.
For that one, we observe
\begin{multline}
\int W_{0,e}^\rho \varphi dx = \lim\limits_{k\to \infty} \int W_{h_k,e}^\rho (x) \varphi(x) dx = \\
\lim\limits_{k\to \infty} \int \nabla_e^{h_k} U_{h_k}  (x) \varphi(x) dx = 
\lim\limits_{k\to \infty} \int U_{h_k} (x) \nabla_{-e}^{h_k} \varphi(x) dx  = \\
\int U_0^\rho (x) \nabla_{-e} \varphi(x) dx = -\int U_0^\rho (x) \nabla_e \varphi(x) dx,
\end{multline}
and hence the equality in \eqref{gradient-U}.

\bigskip

Due to \eqref{scaled-grad-lip} the function $W_{0,\rho}$ is $C^{0,1}$ in  the neighbourhood of $\partial \{U_0^\rho>0\}$, and is vanishing everywhere
outside the support of $U_0^\rho$, and hence from \eqref{gradient-U} we obtain 
\begin{equation}\label{grad-0-bdry}
|\nabla U_0^\rho |=0  \text{ on } \partial \{U_0^\rho >0 \} .
\end{equation}

\bigskip

Finally, applying diagonal argument
as $\rho\to 0$,  we conclude the existence of a subsequence $h_k\to 0$ as $k\to \infty$,
and a function $U_0$ such that
\begin{itemize}
\item[(a)] $U_0 \geq 0 $ is compactly supported, and is $C^{1,1}$ in $\R^d$ away from any open neighbourhood of the origin;

\smallskip
\item[(b)] $U_{h_k} \to U_0$ uniformly outside any open neighbourhood of the origin

\smallskip
\item[(c)] $\Delta U_0 = 2d m \mathbb{I}_{\{ 0<U_0<1/m \}}  -\delta_0$ in $\R^d$.
\end{itemize}
Note, that in (c) we get the equation in $\R^d$ instead of only $\{U_0>0\}$,
as $|\nabla U_0| = 0$ on $\partial \{U_0>0\}$. The only thing which needs a proof, is
the equation in (c) near the origin. That, again, can be handled as in \cite[Theorem 5.3]{AS}.
Namely, for each $h>0$ let $\Phi_h $ be the fundamental solution to $\Delta^h$ in $h\Z^d$.
Then, $\Delta^h (u_h  - \Phi_h) =0$ in $B_r$, where $r>0$ is fixed small enough such that
$B_r\subset \{u_h > 1/m \}$. The existence of such $r$ follows from Proposition \ref{prop-size-of-mass-FREE-region}.
Applying DMP we obtain, that $|u_h  - \Phi_h|\leq C$ in $B_r$, and hence the limit
$|U_0 - \Phi_0|\leq C$ in $B_r\setminus \{0\}$ and solves $\Delta ( U_0 - \Phi_0 )=0$
in $B_r\setminus \{0\}$. It follows that the origin is a removable singularity 
for the function harmonic $U_0 - \Phi_0$, implying the equality in (c) near the origin.

\bigskip

We now apply Lemma \ref{lem-unique} from Appendix, which states that there
is a unique solution to (c), and that solution is spherically symmetric.
With this at hand, we conclude the existence of the scaling limit of the sequence $U_h$,
without passing to a subsequence, since we get that any sequence of scales, contains a
subsequence, along which $U_h$ converges to the same limit.

The proof of the theorem is now complete.
\end{proof}

\begin{remark}
Observe, that convergence in \eqref{weak-conv1} implies that the set of a visited sites
of the sandpile, after scaling, converges to the support of the limiting odometer.
In particular, the family of odometers $\{u_h\}$ do not degenerate as $h\to 0$.
\end{remark}

Now, a simple compactness argument allows us to take limits when $m\to \infty$.
This in particular, gives a sandpile dynamics, which redistributes all mass on the free boundary,
which has a circular shape.

\begin{theorem}\label{Thm-scaling-limit-inf-M}{\normalfont(Scaling limit as $m\to \infty$)}
Let $u_{n,m}$  be  the odometer of the sandpile with initial distribution $n \delta_0$, and threshold $1<m<n$. Then there is a slowly increasing  function $F:\R_+ \to \R_+$, with $F(+\infty) = +\infty $, such that 
the odometers $u_{n,F(n)}$,
 after rescaling by $n^{-2/d}$, converge to a spherically
symmetric function, as $ n \to \infty$.
\end{theorem}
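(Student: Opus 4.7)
The plan is a two-layer compactness argument followed by a diagonal extraction. For each fixed integer $m\geq 2$, Theorem \ref{Thm-scaling-limit-fixed-M} provides a unique spherically symmetric limit $u_0^m$ of the rescaled odometers $u_{h,m}$ as $h = n^{-1/d}\to 0$. I would first transfer the uniform-in-$m$ bounds \eqref{scaled-lip} together with the size estimates of Proposition \ref{prop-size-of-mass-FREE-region} and Lemma \ref{Lem-size-of-mass-region} through the limit $h\to 0$, obtaining that the family $\{u_0^m\}_{m\geq 2}$ is uniformly Lipschitz away from the origin, that every support is contained in a fixed ball $B(0,R_0)$, and that there is a uniform lower bound on the inner radius of $\{u_0^m>0\}$.

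By Arzel\`a--Ascoli applied along a sequence of compacts exhausting $\R^d\setminus\{0\}$, a subsequence $u_0^{m_j}$ converges locally uniformly to a non-negative, spherically symmetric, compactly supported function $u_0^\infty$. Since $u_0^m$ has compact support and $|\nabla u_0^m|=0$ on the free boundary by (iii), integrating (ii) shows that $\nu_m:=2dm\,\mathbb{I}_{\{0<u_0^m<1/m\}}\,dx$ is a probability measure supported in an annular shell of thickness $\OO(1/m)$. Hence the weak limit $\nu_\infty$ is forced onto a single sphere $\partial B(0,R_\infty)$, and $u_0^\infty$ solves distributionally
\[
\Delta u_0^\infty \;=\; \nu_\infty - \delta_0, \qquad |\nabla u_0^\infty|=0 \text{ on } \partial B(0,R_\infty),
\]
the gradient condition being inherited from \eqref{scaled-grad-lip}. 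In the spherically symmetric single-source regime this overdetermined problem reduces to a radial ODE whose unique solution is an explicit combination of the Newtonian potential of $\delta_0$ and a constant, with $R_\infty$ determined uniquely by the vanishing-gradient matching at $|x|=R_\infty$ and the requirement $\nu_\infty(\R^d)=1$. Uniqueness removes the subsequence and shows $u_0^m \to u_0^\infty$ as $m\to\infty$ along the full family.

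The function $F$ is then obtained by a standard diagonal construction. For each integer $k\geq 2$, Theorem \ref{Thm-scaling-limit-fixed-M} yields $N_k\in\N$, which I choose strictly increasing in $k$, such that $\| u_{h_n,k}-u_0^k \|_{L^\infty(K_k)} < 1/k$ for all $n\geq N_k$, where $\{K_k\}$ exhausts $\R^d\setminus\{0\}$. Setting $F(n)=k$ for $N_k\leq n<N_{k+1}$ gives a function tending to infinity (as slowly as the $N_k$ are spaced), and the triangle inequality combined with $u_0^m \to u_0^\infty$ produces $u_{h_n,F(n)}\to u_0^\infty$ locally uniformly outside the origin.

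The main obstacle is the second step: as $m\to\infty$ the equation for $u_0^m$ degenerates, since its coefficient $2dm$ blows up on a set whose measure shrinks like $1/m$. Spherical symmetry is what saves the argument, because it forces the concentration regions to be nested thin annuli, so the weak limit is automatically a uniform surface measure on a single sphere, and the resulting spherically symmetric Bernoulli-type limit problem admits a unique solution. This is precisely where the single-source assumption is essential, and its failure for multi-source initial data is exactly the difficulty flagged in the technical remark at the start of the paper.
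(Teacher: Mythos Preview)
Your overall architecture matches the paper's: first show that the fixed-$m$ limits $u_0^m$ converge as $m\to\infty$ to a spherically symmetric $u_0^\infty$, then diagonalize to manufacture $F$. The diagonal construction in your third paragraph is essentially identical to the paper's short argument. The divergence is entirely in how you justify the convergence of $u_0^m$.

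The paper does not use a compactness-plus-uniqueness argument here. Instead it invokes Lemma~\ref{lem-rad-conv}, which relies on the explicit radial form of $u_0^m$ obtained in Lemma~\ref{lem-unique}: the inner radius $r_1=x_m$ is the unique root of an explicit scalar equation (\eqref{f1} for $d\geq 3$, \eqref{f2} for $d=2$), and a direct Taylor expansion in $1/m$ shows $x_m$ converges to a specific numerical value. This is elementary and entirely self-contained.

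Your route---extract a subsequential limit $u_0^\infty$ and then appeal to uniqueness of a limiting free boundary problem---has a genuine gap at the uniqueness step. The conditions you list do \emph{not} pin down $R_\infty$. First, $|\nabla u_0^\infty|=0$ on $\partial B(0,R_\infty)$ is false if read as the inner normal derivative: each $u_0^m$ equals $a_1^m+\omega_d|x|^{2-d}$ on $\{|x|\le r_1^m\}$, so the inner radial derivative at the collapsing annulus stays bounded away from zero and the limit has a nontrivial jump across $\partial B(0,R_\infty)$ (this jump \emph{is} the density of $\nu_\infty$). The estimate \eqref{scaled-grad-lip} you cite carries a factor $m$ and gives no uniform-in-$m$ control. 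Second, the mass constraint $\nu_\infty(\R^d)=1$ is automatic for \emph{every} radius: by the divergence theorem the outward flux of the Green's function across any sphere equals $1$. So your ``overdetermined'' system is in fact underdetermined by one scalar, and every $R_\infty>0$ yields a valid solution of the problem you wrote down.

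What is actually needed to make your approach work is the correct Bernoulli condition $|\nabla u_0^\infty|^{-}=c_0$ on $\partial\{u_0^\infty>0\}$ for a \emph{specific} constant $c_0$ determined by the singular perturbation $2dm\,\mathbb{I}_{\{0<u<1/m\}}$ (in the spirit of \cite{BCN}). Deriving that constant requires a quantitative analysis of the thin annulus---essentially the second-order term in the expansion that the paper computes explicitly in the proof of Lemma~\ref{lem-rad-conv}. Without it, subsequential limits could in principle sit on different spheres, and you cannot upgrade to full convergence.
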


\begin{proof}
Thanks to Theorem \ref{Thm-scaling-limit-fixed-M}, for each $m$ fixed, there exists a scaling limit
for $\{u_{n,m}\}$ as $n\to \infty$. Let $u_{0,m}$ be this limit, which we know is spherically symmetric.
Next, according to Lemma \ref{lem-rad-conv}, we get that $u_{0,m}$ converges uniformly, away from the origin, to some $u_0$, a spherically symmetric
function.
Finally, for each given $m$ we choose $n=n(m)$ large enough, such that
$$
|n^{-2/d} u_{n,m} - u_{0,m} | \leq \frac{1}{m}, \text{ when } |x|\geq \frac 1m.
$$
We may obviously   choose $n= n(m)$ as increasing function of $m$.
Taking $m$ to infinity, and using the convergence of $u_{0,m}$ to $u_0$, we have a final spherical configuration. Now reverting the relation $m \mapsto n$, we get a function $n\mapsto F(n)$ which satisfies the conditions in the theorem. This completes the proof.
\end{proof}

\appendix

\section{Uniqueness of the scaling limit}\label{sec-unique}

For any $m, k >0$ fixed,  let $u$ (being compactly 
supported)\footnote{This assumption is not any restriction to our theory, as the support of odometer functions are compact. 
It should be remarked that one  can show existence of  unbounded solutions to this particular free boundary problem.} solve 
\begin{equation}\label{contin-pde}
\Delta u = m \mathbb{I}_{\{0<u< k\}} -   \delta_0 .
\end{equation}
%where $c_d$ is such that $\Delta F_d(x)  = - c_d\delta_0$, with $F_d$ normalized fundamental solution in dimension $d$, and
%$$
%F_2(x) = \log |x|^{-1}, \qquad F_d(x) = |x|^{2-d} .
%$$
We want to prove that such a $u$ is unique and spherically  symmetric. 
To see this, we show that if $u$ is not spherically symmetric, then we may generate two distinct spherically
symmetric solutions by starting from infimum/supremum over all rotations of $u$.
Then, we prove that given the solution is spherically symmetric, then it is uniquely determined
by coefficients $m$ and $k$. These two components put together settle the uniqueness of solutions to
\eqref{contin-pde}.

\bigskip

We shall now take the smallest super-solution solution and we only need to check that the infimum in this class  does not degenerate to zero and that it solves the problem. Obviously we may only consider spherically symmetric super-solutions with shrinking  radius of support. Hence integration by parts 
(using that $u_*$ vanishes outside a large ball) implies $Volume(\{ 0< u_* < k\}) \geq 1$.
This bound, coupled with the spherical symmetry of $u_*$ implies
that the support of $u_*$ must contain a ball of a fixed radius, in particular 
the support cannot shrink too much. 
So it remains to show that the smallest super-solution is actually a solution. Now in the set $\{u_* > k\}$, we may always replace $u_*$ by $h$ which is the solution of 
$\Delta h = -\delta_0$, with boundary values $k$ on $\partial \{u_* > k\}$. Also in the set 
$\{ u_* < k\}$ we make a replacement with solution to $\Delta h = k\mathbb{I}_{ \{h>0 \}}$ and the corresponding boundary values. Hence we may assume $\Delta u_* = m \mathbb{I}_{\{0<u< k\}} -   \delta_0  - \mu_* $, where $support (\mu_*)$ is on the sphere $ \partial \{u_* > k\}  $.  Next let $v_*$ solve $\Delta v_* = -\delta_0 + m\mathbb{I}_{\{0< u_* < k\}}$ and $v_* = 0 $ on the boundary of the support of $u_*$.
Then by comparison principle $v_* \leq u_*$, and hence $\Delta v_* \leq - \delta_0 + m\mathbb{I}_{ \{0< v_* < k\}} $, implying that $v_*$ is also a super-solution to our problem. Now $u_*$ being infimum in the class implies $v_*=u_*$, and hence $u_*$ solves \eqref{contin-pde}.

\bigskip 

A similar argument works by taking the largest sub-solution, among all solutions with bounded support. 
Here again we may consider spherically symmetric sub-solutions. We need to show that the largest sub-solution in the class stays bounded.  As in previous case we may replace the maximizing sequence of sub-solutions  $u_j^*$ with exact solutions in the sets 
$\{ u_j^* > k \}$ and $\{u_j^* < k \}$. To see that the support of such sub-solutions  should stay uniformly bounded we argue as follows.  Let $R_j^*$ be the radius of the support of $u^*_j$, and $R_j$ be such that $u_j^* = k$ on $|x|=R_j$.   Using sub-solutions properties, we can conclude that 
$Volume(\{ 0< u^*_j < k\}) \leq 1$ (this is the reverse of the previous inequality). Hence 
$R_j^* \approx  R_j + c_dR_j^{1-d}$, for some dimensional constant $c_d >0$.

Let now $F$ denote the fundamental solution, and set $F_j =\max (F - a_d(R_j^*)^{2-d}, 0 ) $, for 
$d \geq 3$ and $F_j =  a_2 \max (\log (R_j^*/|x|), 0 )$,
where $a_d$ is a normalization constant for $d\geq 2$.
%Then  $\Delta F_j = -\delta_0$ in $\R_j^*$.
Since 
$$\Delta F_j = -\delta_0 \leq -\delta_0 + m \mathbb{I}_{\{0< u^*_j < 1/k \}} \leq \Delta u^*_j, \qquad 
\hbox{in } B_{R_j^*} 
$$
and 
$$
 F_j > 0 = u^*_j \quad \hbox{on } \partial B_{R^*_j},$$
we can apply  comparison principle  to deduce $F_j    \geq u^*_j$  on  $B_{R_j^*}$.
But on the other hand  for $|x| = R_j \approx  R_j^*  - c_dR_j^{1-d}  $
we have   
 $$F_j  (x) = aR_j^{2-d}  - a(R_j^*)^{2-d} <  1/k = u_j^* (x)$$ 
once  $ R_j^*   \approx  R_j + c_dR_j^{1-d}$  is large  enough. Hence a contradiction. 
This implies that the support of sub-solutions should stay bounded.
Now taking the largest sub-solution, and using the fact that their supports are uniformly
bounded, we get yet another solution to \eqref{contin-pde}.

\bigskip

The conclusion is that if \eqref{contin-pde} has a solution
with bounded support, which is not spherically symmetric,
then we can produce two distinct spherically symmetric solutions.
Hence to show uniqueness of solutions to \eqref{contin-pde}
with bounded support, it suffices to show that there is only
one spherically symmetric solution to \eqref{contin-pde}
having bounded support. This we establish in the next lemma.

\begin{lem}\label{lem-unique}
For any $m,k>0$ there is a unique spherically symmetric solution to \eqref{contin-pde}
having bounded support.
\end{lem}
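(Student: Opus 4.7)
The plan is to reduce, via spherical symmetry, to an ODE with three radial regions (inside the level set $\{u > k\}$, the annulus $\{0 < u < k\}$, and the exterior), and then to exploit explicit matching across the two free-boundary radii. Writing $U(r) = u(x)$ for $r = |x|$, the spherical symmetry and continuity of $U$, together with the $C^{1,1}$ matching at $\partial \{u > 0\}$ (as in Theorem \ref{Thm-scaling-limit-fixed-M}), force $U$ to be monotonically decreasing, with $\{U > k\} = B_{R_1}$ and $\supp u = \overline{B_R}$ for some $0 < R_1 < R$. On $B_{R_1} \setminus \{0\}$ we have $U = \Phi + c_0$, where $\Phi$ is the radial fundamental solution of $-\Delta$ in $\R^d$; on the annulus, $U(r) = \frac{m r^2}{2d} + A r^{2-d} + B$ for $d \geq 3$ (with the analogous logarithmic form for $d = 2$); and $U \equiv 0$ outside $B_R$.

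The free-boundary conditions $U(R) = 0$ and $U'(R) = 0$ determine $A$ and $B$ in terms of $R$, and matching the derivative of $U$ across $r = R_1$ simplifies, after elementary manipulation, to the mass-conservation identity $R^d - R_1^d = d/(m \omega_{d-1})$, which is also a direct consequence of integrating the equation over $\R^d$. This identity fixes $R$ as a smooth, strictly increasing function of $R_1$, so the remaining matching (the value of $U$ at $R_1$) reduces to the single equation
\[
F(R_1) := \Phi(R_1) - \frac{m \bigl(R(R_1)^2 - R_1^2\bigr)}{2(d-2)} - k = 0.
\]
One checks $F(R_1) \to +\infty$ as $R_1 \to 0^+$ and, using the asymptotic $R^2 - R_1^2 \sim 2(d-2)\Phi(R_1)/m$ as $R_1 \to \infty$, $F(R_1) \to -k$. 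Hence at least one zero exists by the intermediate value theorem.

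Uniqueness is the real obstacle and amounts to showing $F' < 0$. Setting $x := (R_1/R)^d \in (0, 1)$ and using the mass-conservation identity, a direct computation expresses $R_1^{d-1} F'(R_1)$ as a strictly positive multiple of
\[
g(x) := -(d-2) + (2d-2) x - d\, x^{2 - 2/d}.
\]
It therefore suffices to check that $g < 0$ on $(0, 1)$, which follows from $g(1) = 0$ together with $g'(x) = (2d-2)\bigl(1 - x^{1 - 2/d}\bigr) > 0$ on $(0, 1)$ for $d > 2$. The case $d = 2$ is entirely analogous, using the logarithmic fundamental solution and an even simpler monotone auxiliary function.
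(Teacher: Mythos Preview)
Your proof is correct and follows essentially the same route as the paper: both write the radial solution explicitly on the two regions, impose the five matching conditions ($u=0$, $|\nabla u|=0$ at the outer radius; $u=k$, continuity of $u$ and $\nabla u$ at the inner radius), eliminate down to a single equation in the inner radius, and establish uniqueness by proving strict monotonicity of the resulting one-variable function via a substitution. Your bookkeeping is slightly tidier (you isolate the mass-conservation identity $R^d-R_1^d=\text{const}$ and use the ratio $x=(R_1/R)^d$, whereas the paper uses $y\propto r_1^{-d}$), but the argument is the same in substance.
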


\begin{proof}
We shall give a computational proof, which is elementary, but tedious. 
First we consider the case when $d>2$.
Let $\omega_d>0$ be the normalising constant of the Green's kernel\footnote{It is well-known
that $\omega_d = (d(d-2) |B_1|)^{-1}$ where $|B_1| $ is the volume
of the unit ball in $\R^d$, but the actual value of this constant will be of no relevance to our proof.}.
The spherical symmetry of $u$ implies that it should be of the form
$$
 u(x) =  \begin{cases}  a_1 + \omega_d |x|^{2-d} , &\text{if $0<|x|\leq r_1$}, \\ 
  a_2 + a_3|x|^{2-d} + \frac{m}{2d} |x|^2  , &\text{if } r_1<|x|\leq r_2,    \end{cases}
 $$
where $a_i$ and $r_j$ are constants.
The aim is to show that these constant are uniquely determined from \eqref{contin-pde},
which we do next.

By a straightforward computation we have
\begin{eqnarray}
\label{aa1} a_2 + a_3 r_2^{2-d} + \frac{m}{2d} r_2^2 =0 \\
\label{aa2} a_3 (2-d) r_2^{-d} + \frac md =0 \\
\label{aa3} a_1 + \omega_d r_1^{2-d} = k \\
\label{aa4} a_2 + a_3 r_1^{2-d} +\frac{m}{2d} r_1^2 = k \\
\label{aa5} \omega_d (2-d) r_1^{-d} = a_3 (2-d) r_1^{-d} + \frac md,
\end{eqnarray}
where \eqref{aa1} is due to the condition $u=0$ on $|x|=r_2$, \eqref{aa2} comes from $|\nabla u|=0$ on $|x|=r_2$,
\eqref{aa3} is in view of $u=k$ on $|x|=r_1$ and \eqref{aa4} is the continuity of $u$ on $|x|=r_1$,
finally \eqref{aa5} is the continuity of the gradient on $|x|=r_1$.

From \eqref{aa5} and \eqref{aa2} we have 
\begin{equation}\label{aa5-1}
a_3= \omega_d +  \frac{m}{d(d-2)} r_1^d = \frac{m}{d(d-2)} r_2^d,
\end{equation}
and hence 
\begin{equation}\label{rr2}
r_2= \left[ r_1^d + \omega_d   \frac{d (d-2)}{m}   \right]^{1/d}.
\end{equation}
Subtracting \eqref{aa2} from \eqref{aa4}, to eliminate $a_2$, we obtain
\begin{equation}
\frac{m}{d(d-2)} r_1^{2-d} \left[ r_1^d + \omega_d   \frac{d (d-2)}{m}   \right]
+\frac{m}{2d} r_1^d - \frac{m}{2(d-2)} r_2^2 = k.
\end{equation}
Plugging the value of $r_2$ from \eqref{rr2}, we get
\begin{equation}\label{r1-eq}
\frac{m}{2(d-2)} r_1^2 + \omega_d r_1^{2-d} -
\frac{m}{2(d-2)} \left[ r_1^d + \omega_d \frac{d(d-2)}{m}  \right]^\frac 2d =k.
\end{equation}

To complete the proof we need to see that \eqref{r1-eq} has a unique solution when $0<r_1<\infty$.
Rearranging \eqref{r1-eq}, consider the function
\begin{equation}\label{r-1-for-d-large}
f(x)  = k - \frac{\omega_d}{x^{d-2}} - \frac{m}{2(d-2)} x^2 +
\frac{m}{2(d-2)} \left[ x^d + \omega_d \frac{d(d-2)}{m}  \right]^{\frac 2d},
\end{equation}
where $0<x<\infty$. We need to show that $f$ has a unique zero in $(0,\infty)$.

Observe, that $f(0+)= -\infty$, and $f(+\infty) = k >0$, and hence the continuity of $f$ gives the existence of
a zero for $f$. We are left with establishing the uniqueness of this zero.
To this end, computing the derivative of $f$ we get
\begin{multline*}
f'(x) = (d-2) \frac{\omega_d}{x^{d-1}} + x \frac{m}{d-2} 
\left[ \left(  1+\frac{\omega_d}{x^d} \frac{d(d-2)}{m}  \right)^{\frac{2-d}{d}} -1  \right] =\\
x \left\{ (d-2) \frac{\omega_d}{x^d}   + \frac{m}{d-2} \left[ \left(  1+\frac{\omega_d}{x^d} \frac{d(d-2)}{m}
 \right)^{ \frac{2-d}{d} } -1 \right]   \right\} .
\end{multline*}
We want to show that $f'(x)>0$ everywhere, which will complete the proof.
Ignoring the $x$ in front of $f'$ in the last expression and denoting $y: = (d-1)\omega_d x^{-d} $, we obtain
that $f'(x) = x F(y)$, where
$$
F(y)  = y + \frac{m}{d-2} \left[  \left(  1+\frac dm y \right)^{ \frac{2-d}{d}} -1 \right],
$$
and hence it is enough to show that $F>0 $ everywhere. The latter follows
easily from the fact that $F(0) =0 $ and $F'>0$ in $(0,\infty)$.
We conclude the proof of uniqueness in the case when $d\geq 3$.

\bigskip

We now treat the case of $d=2$.
Set $\omega_2=-1/(2\pi)$ which is the normalising constant for the Green's kernel.
Again, the exact value of $\omega_2$ is not important for the proof,
and we will only use that $\omega<0$.
Relying on the spherical symmetry of $u$, as above, we see that $u$ has to be of the form
$$
 u(x) =  \begin{cases}  a_1 + \omega_2 \log |x| , &\text{if $0<|x|\leq r_1$}, \\ 
  a_2 + a_3 \log|x|  + \frac{m}{4} |x|^2  , &\text{if } r_1<|x|\leq r_2,    \end{cases}
 $$
where $a_i$ and $r_j$ are unknown constants. The goal is to show that these constants
are being determined uniquely given the properties of $u$. As a above, a direct computation leads to
\begin{eqnarray}
\label{bb1} a_2 + a_3 \log r_2  + \frac{m}{4} r_2^2 =0 \\
\label{bb2} a_3  r_2^{-2} + \frac m2 =0 \\
\label{bb3} a_1 + \omega_2 \log r_1  = k \\
\label{bb4} a_2 + a_3 \log r_1  + \frac{m}{4} r_1^2 = k \\
\label{bb5} \omega_2  r_1^{-2} = a_3   r_1^{-2} + \frac m2,
\end{eqnarray}
where \eqref{bb1} is due to the condition $u=0$ on $|x|=r_2$, \eqref{bb2} comes from $|\nabla u|=0$ on $|x|=r_2$,
\eqref{bb3} is in view of $u=k$ on $|x|=r_1$ and \eqref{bb4} is the continuity of $u$ on $|x|=r_1$,
finally \eqref{bb5} is the continuity of the gradient on $|x|=r_1$.

Using \eqref{bb2} we get $a_3= -\frac m2 r_2^2 $, which together with \eqref{bb5}
implies
$$
r_2= \left( r_1^2 -\frac 2m \omega_2 \right)^{\frac 12}.
$$
Subtracting \eqref{bb4} from \eqref{bb1} we obtain
$$
a_3 \log \frac{r_1}{r_2} + \frac m4 (r_1^2 - r_2^2 ) = k.
$$
From the above relation between $r_1$ and $r_2$ we have
$$
 k - \frac{\omega_2}{2} = a_3 \log \frac{r_1}{r_2} = - \frac{a_3}{2}  \log \left( \frac{r_2}{r_1} \right)^2
$$
Plugging the values of $a_3$ and $r_2$ we get
\begin{equation}\label{r-1-for-d2}
k- \frac{\omega_2}{2} = \frac m4 \left( r_1^2 - \frac 2m \omega_2 \right)
\log \left(1- \frac 2m \frac{\omega_2}{r_1^2} \right),
\end{equation}
and thus need to show that the last equation has a unique solution in $(0, \infty)$.
To this end, set $y := -\frac{2}{m} \frac{\omega_2}{r_1^2}$.
With this notation, we need to prove that the function
$$
F(y) = -\frac 12 \omega_2 \left(1+ \frac 1y \right) \log (1+y) - k +\frac{\omega_2}{2},
$$
has a unique zero in the interval $(0,\infty)$.
Observe, that $F(0+) = - k + \frac{\omega_2}{2}<0$ as $\omega_2<0$,
and $F(+\infty)=+\infty$, hence the existence.
For uniqueness, computing the derivative of $F$, we see
$$
F'(y) = -\frac{\omega_2}{2y} \left(1-\frac{\log(1+y)}{y} \right),
$$
which is always positive in the range $y\in(0, \infty)$, and the uniqueness follows.

The proof of the lemma is now complete.
\end{proof}

As we saw in Section \ref{sec-scaling-lim}, the scaling limit of the sandpile generated by a single
source at the origin, is determined by the following PDE
\begin{equation}
 \Delta u = 2d m \mathbb{I}_{ \{ 0<u<1/m } \} - 2d \delta_0 \text{ in } \R^d,
\end{equation}
where the support of $u$ bounded and contains a ball
of some fixed radius, in both cases uniformly with respect to $m$.
From Lemma \ref{lem-unique} we have that $u$ is spherically symmetric,
moreover, if we let $x_m$ be the radius of the ball $\{ u\leq \frac 1m \}$,
then following \eqref{r-1-for-d-large} and \eqref{r-1-for-d2} we have that $x_m$ is the unique
zero in the range $(0,\infty)$ of the function 
\begin{equation}\label{f1}
 f(x)  = 1 - 2d m \frac{\omega_d   }{x^{d-2}} - m^2 \frac{d}{d-2} x^2 + m^2 \frac{d}{d-2} \left[  x^d + \omega_d \frac{d(d-2)}{m}  \right]^{\frac 2d},
\end{equation}
for dimension $d>2$, and for dimension 2, rearranging \eqref{r-1-for-d2}, and plugging the value of $\omega_2 = -\frac{1}{2\pi} $,
we see that $x_m$ is the unique solution in the range $(0,\infty)$ of the equation
\begin{equation}\label{f2}
\frac{1}{m^2} + \frac{1}{\pi m} =   \left( x^2 + \frac{1}{\pi m} \right) \log \left(1+\frac{1}{\pi m x^2} \right).
\end{equation}
Our final result shows that the limiting shapes of the sandpiles, corresponding to each $m$
converge, as $m\to \infty$. For this it will be enough to prove that the radii $x_m$ converge.
This we do next; the proof is elementary and straightforward.

\begin{lem}\label{lem-rad-conv}
The radii $x_m$ converge as $m\to \infty$.
\end{lem}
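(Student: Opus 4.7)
The plan is to show, for each dimension case, that the function defining $x_m$ has a pointwise limit in $x$ as $m\to\infty$, whose unique zero in $(0,\infty)$ determines $\lim_{m\to\infty} x_m$. Throughout I rely on the monotonicity already established inside the proof of Lemma \ref{lem-unique}: the function $f$ in \eqref{f1} is strictly increasing in $x$, so $x_m$ is characterised as its unique sign change. This also means no a priori uniform bounds on $x_m$ are needed upfront, since pointwise convergence of $f$ at two well-chosen test points will supply them automatically.

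For $d\geq 3$, I would Taylor-expand
\[
\left[x^d+\omega_d\frac{d(d-2)}{m}\right]^{2/d}=x^2\left(1+\frac{\omega_d\, d(d-2)}{m x^d}\right)^{2/d}
\]
to second order in $1/m$: the constant term $x^2$ cancels $-x^2$ in \eqref{f1}; the first-order term, multiplied by $m^2 d/(d-2)$, produces $2dm\omega_d/x^{d-2}$ and cancels the corresponding subtraction in $f$; and the second-order term yields $-d(d-2)^2\omega_d^2/x^{2d-2}$ up to an $O(1/m)$ remainder. Hence for each fixed $x>0$,
\[
f(x)\longrightarrow f_\infty(x):=1-\frac{d(d-2)^2\omega_d^2}{x^{2d-2}}\qquad\text{as }m\to\infty,
\]
a strictly increasing function with unique zero $x_*=[d(d-2)^2\omega_d^2]^{1/(2d-2)}$. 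For any $\varepsilon>0$, pointwise convergence gives $f(x_*+\varepsilon)>0$ and $f(x_*-\varepsilon)<0$ for all large $m$; monotonicity of $f$ then sandwiches $x_m$ in $(x_*-\varepsilon,x_*+\varepsilon)$, so $x_m\to x_*$.

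For $d=2$, I would substitute $t=1/(\pi m x^2)$ into \eqref{f2} and rearrange to
\[
\frac{(1+t)\log(1+t)}{t}=1+\frac{\pi}{m}.
\]
The function $g(t):=(1+t)\log(1+t)/t$ has derivative $(t-\log(1+t))/t^2>0$, maps $(0,\infty)$ strictly increasingly onto $(1,\infty)$, and satisfies $g(t)-1=t/2+O(t^2)$ as $t\to 0^+$. Hence $t_m$ is uniquely determined by the right-hand side, $t_m\to 0$, and $t_m\sim 2\pi/m$; consequently $x_m^2=1/(\pi m t_m)\to 1/(2\pi^2)$. The main obstacle in both cases is just the bookkeeping of the expansion in $1/m$ so that the three subtractions built into \eqref{f1} (respectively the structure of \eqref{f2}) cancel against it in the correct order; once that is done, the monotonicity already in hand reduces convergence of $x_m$ to a one-line intermediate-value argument.
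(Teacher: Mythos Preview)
Your proposal is correct and follows the same Taylor-expansion approach as the paper, arriving at the same limit $x_*=[d(d-2)^2\omega_d^2]^{1/(2d-2)}$ for $d\geq 3$ and $x_*=1/(\sqrt{2}\,\pi)$ for $d=2$ (your substitution $t=1/(\pi m x^2)$ is just the reciprocal of the paper's $y=\pi m x^2$).

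The one genuine difference is how you justify passing to the limit. The paper appeals to the sandpile connection to assert that $\{x_m\}$ is bounded away from $0$ and $\infty$, then plugs $x=x_m$ into the expansion and reads off the limit. You instead exploit the strict monotonicity of $f$ (already proved inside Lemma~\ref{lem-unique}) together with pointwise convergence of $f$ at $x_*\pm\varepsilon$ to trap $x_m$ by an intermediate-value argument. This avoids invoking the sandpile model altogether and makes the appendix self-contained as pure analysis; the paper's route is slightly shorter but ties the lemma back to the earlier sections.
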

\begin{proof}
Due to their relation with the sandpile, the set $\{x_m\}_{m\geq 1}$ is bounded away from zero and infinity,
as we outlined above. We first consider the case when $d>2$. Rearranging the last two terms in \eqref{f1}
we get, for $x=x_m$, that
\begin{multline*}
f(x) = 1 - 2d m \frac{\omega_d   }{x^{d-2}}  + m^2 \frac{d}{d-2} x^2 \left[  \left(  1 + \frac{\omega_d}{x^d} \frac{d(d-2)}{m }  \right)^{\frac 2d}  -1  \right] = \\
1 - 2d m \frac{\omega_d   }{x^{d-2}}  + 
m^2 \frac{d}{d-2} x^2 \left[ \frac 2d \frac{\omega_d}{x^d} \frac{d(d-2)}{m} + \frac 12 \frac 2d \left( \frac 2d -1 \right) \omega_d^2 \frac{( d(d-2))^2}{x^{2d} m^2} + \mathrm{O}(m^{-3})   \right]  = \\
1- x^{2-2d} \omega_d^2 (d-2)^2 d + \mathrm{O}(m^{-1}), \text{ as } m\to \infty,
\end{multline*}
where we have used the fact that $\{x_m\}$ is bounded away from 0 and infinity.
Since $f(x_m) =0$, from the last expression, taking $m\to \infty$, we see that
$$
\lim\limits_{m\to \infty} x_m = (d(d-2)^2 \omega_d^2 )^{\frac{1}{2(d-2)}}.
$$

\bigskip

We now settle the case of $d=2$. Defining $y= \pi m x^2$ in \eqref{f2},
we get
$$
1+\frac{\pi}{m} = (y+1) \log \left(1+\frac 1y \right).
$$
Now, using that $x_m$, the solution to \eqref{f2} is bounded from 0 and infinity,
we do asymptotic expansion in the last expression, when $m \to \infty$ (and hence $y\to \infty$), which gives
$$
1+\frac{\pi}{m} = 1+ \frac{1}{2y} + \mathrm{O} (y^{-2}) \text{ as } m\to \infty,
$$
and hence, as $y=\pi m x_m^2$, we obtain that 
\begin{equation}\label{m2}
\lim\limits_{m\to\infty} x_m = \frac{1}{\sqrt{2}} \frac{1}{\pi}.
\end{equation}

The proof of the lemma is now complete.
\end{proof}

\end{document}